\tikzset{cross/.style={cross out, draw=red, minimum size=2*(#1-\pgflinewidth), inner sep=0pt, outer sep=0pt},
cross/.default={3pt}}
\newtheorem{thm}{Theorem}[section]
\newtheorem{prop}[thm]{Proposition}
\newtheorem{lem}[thm]{Lemma}
\newtheorem{rk}[thm]{Remark}
\newtheorem*{thm*}{Theorem}
\newtheorem*{prop*}{Proposition}
\newtheorem*{lem*}{Lemma}
\renewcommand{\P}{\mathbb{P}}
\newcommand{\R}{\mathbb{R}}
\newcommand{\N}{\mathbb{N}}
\newcommand{\Z}{\mathbb{Z}}
\newcommand{\D}{\mathbb{D}}
\newcommand{\cP}{\mathcal{P}}
\newcommand{\cB}{\mathcal{B}}
\newcommand{\cT}{\mathcal{T}}
\newcommand{\cL}{\mathcal{L}}
\newcommand{\cH}{\mathcal{H}}
\newcommand{\cC}{\mathcal{C}}
\newcommand{\be}{\mathbbm{e}}
\newcommand{\Skel}{\mathrm{Sk}}
\newcommand{\cutT}{\mathcal{C}}
\newcommand{\dC}{d_{\mathcal{C}}}
\newcommand{\Sk}{\mathrm{Sk}}
\tikzset{cross/.style={cross out, draw=black, minimum size=2*(#1-\pgflinewidth), inner sep=0pt, outer sep=0pt},
cross/.default={1pt}}
\DeclareSymbolFont{extraup}{U}{zavm}{m}{n}
\DeclareMathSymbol{\vardspade}{\mathalpha}{extraup}{81}
\DeclareMathSymbol{\varheart}{\mathalpha}{extraup}{86}
\DeclareMathSymbol{\vardiamond}{\mathalpha}{extraup}{87}
\DeclareMathSymbol{\varclub}{\mathalpha}{extraup}{84}
\renewcommand*{\@fnsymbol}[1]{\ensuremath{\ifcase#1\or  \vardspade \or \varheart \or \vardiamond\or \varclub \or \bigstar \or
   \mathsection\or \mathparagraph\or \|\or **\or \dagger\dagger   \or \ddagger\ddagger \else\@ctrerr\fi}}
\author{
Igor Kortchemski\thanks{CMAP, CNRS, École polytechnique, Institut Polytechnique de Paris, 91120 Palaiseau, France. \textsf{igor.kortchemski@polytechnique.edu}} \qquad \& \qquad
Paul Thévenin\thanks{University of Vienna. \textsf{paul.thevenin@univie.ac.at}}   
}
\title{Coupling Bertoin's and Aldous-Pitman's representations of the  additive coalescent}
\date{}
\begin{document}

\maketitle

\begin{abstract}
We construct a coupling between two seemingly very different constructions of the standard additive coalescent, which describes the evolution of masses merging pairwise at rates proportional to their sums. The first construction, due to Aldous \& Pitman, involves the components obtained by logging the Brownian Continuum Random Tree (CRT) by a Poissonian rain on its skeleton as time increases. The second one, due to Bertoin, involves the excursions above its running infimum of a linear-drifted standard Brownian excursion as its drift decreases. Our main tool is the use of an exploration algorithm of the so-called cut-tree of the Brownian CRT, which is a tree that encodes the genealogy of the fragmentation of the CRT.
\end{abstract}

\section{Introduction}

\paragraph{Cutting down trees.}

Starting with a rooted tree, a natural logging operation consists in choosing and removing one of its edges uniformly at random, thus splitting the tree into two connected components. Iterating and removing edges one after another, one obtains a fragmentation process of this tree. This model was introduced by Meir \& Moon \cite{MM70, MM74} for random Cayley and recursive trees. They focused on the connected component containing the root, and  investigated the number of cuts needed to isolate it. Since then, this subject has brought considerable interest for a number of classical models of deterministic and random trees, including random binary search trees \cite{Hol10,Hol11}, random recursive trees \cite{IM07,DIMRS09,Ber15,BB14} and Bienaymé--Galton--Watson trees conditioned on the total progeny \cite{Ja06,ABH14,AKP06,Pan06,Ber12,BM13}.

It is remarkable that the so-called standard additive coalescent, which describes the evolution of masses merging pairwise at rates proportional to their sums, can be defined (after time-reversal) using a continuous analogue of the cutting procedure  on discrete trees.

\paragraph{The Aldous-Pitman construction.} The Aldous-Pitman fragmentation, introduced in \cite{AP98}, describes the evolution of the masses of the connected components of a Brownian CRT $\cT$ cut according to a Poissonian rain $\cP$ of intensity $\mathrm{d}\lambda \otimes \mathrm{d}t$ on $\Skel(\cT) \times \R_+$, where $\mathrm{d}t$ is the Lebesgue measure on $\R_+$ and $\lambda$ is the length measure on the skeleton $\Skel(\cT)$ on $\cT$ (see Sec.~\ref{ssec:def} for precise definitions). We set, for every $t \geq 0$,
\begin{align*}
\cP_t := \{ c \in \Skel(\cT), \exists \, s \in [0,t] ,(c,s) \in \cP \}.
\end{align*}
Then, for every $t \geq 0$, we define $X_{\mathsf{AP}}(t)$ to be the sequence of $\mu$-masses of the connected components of $\cT \backslash \cP_t$, sorted in nonincreasing order, where $\mu$ is the so-called mass measure on $\cT$. Then $(X_{\mathsf{AP}}(t))_{ t \geq 0}$ is a fragmentation process with explicit characteristics (see \cite{AP98} and more generally the book \cite{Ber06} for a general theory of stochastic coalescence and fragmentation processes, as well as examples and motivation). Up to time-reversal, $X_{\mathsf{AP}}$ is closely related  to the well-known standard additive coalescent \cite{EP98,AP98}.  It is also interesting to mention that very recently  $X_{\mathsf{AP}}$ has naturally appeared in  the study of random uniform factorizations of large permutations into products of transpositions \cite{FK17, The21}.

\paragraph{The Bertoin construction.} Bertoin \cite{Ber00} gave another construction of this fragmentation process from a drifted standard Brownian excursion the following way. Let $\mathbbm{e}$ be a standard Brownian excursion on $[0,1]$. For every $t \geq 0$, consider the function $f_t: [0,1] \rightarrow \R$ defined by $f_{t}(s)=\be_s - t s$ for $s \in [0,1]$ and denote by $X_{\mathsf{B}}(t)$ the sequence of lengths of the excursions of $f_t$ above its running infimum, sorted in nonincreasing order. Bertoin \cite{Ber00} proves that this process has the same distribution as the Aldous-Pitman fragmentation of a Brownian CRT (normalized so that it is coded by $2 \mathbb{e}$), i.e.~that $X_{\mathsf{B}}$ and $X_{\mathsf{AP}}$ have the the same distribution.

It may be puzzling that these two constructions define the same object, since the Aldous-Pitman representation involves two independent levels of randomness (the CRT and the Poissonian rain), while the Bertoin representation only involves a Brownian excursion. For the analog representations in the discrete framework of finite trees, several connections have been recently discovered. We present them just after the statement of Theorem \ref{thm:couplage}, which is our main contribution: in the continuous framework, we unify these two constructions and explain why they are actually intimately related.

\paragraph{Coupling the two constructions.}

Our main result consists in coupling $X_{\mathsf{B}}$ and $X_{\mathsf{AP}}$.
\begin{thm}
\label{thm:couplage}
The following assertions hold.
\begin{itemize}

\item[(i)] Let $\cT$ be a Brownian CRT equipped with the Poissonian rain $ \cP$. On the same probability space, there is a function $F_{(\cT,\cP)}$, measurable with respect to $(\cT,\cP)$, having the law of the Brownian excursion and such that almost surely, for every $t \geq 0$, the nonincreasing rearrangement of the masses of the connected components of $\mathcal{T} \backslash \cP_{t}$ is the same as the nonincreasing rearrangement of the lengths of the excursions of $(F_{(\cT,\cP)}(s)-ts)_{ 0 \leq s \leq 1}$ above its running infimum.

\item[(ii)] Conversely, given a Brownian excursion $\mathbbm{e}$ and an independent sequence of i.i.d.~uniform random variables on $[0,1]$, on the same probability space there is a Brownian CRT $\cT$ equipped with a Poissonian rain $\cP$ (which are measurable with respect to $\mathbbm{e}$ and the latter sequence) such that almost surely $F_{(\cT,\cP)}=\mathbbm{e}$.
\end{itemize}
\end{thm}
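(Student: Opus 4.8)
The key object to engineer is the cut-tree of the Brownian CRT. The plan is to realize both representations as different "scans" of the same underlying tree structure, so I would begin by constructing, from $(\cT,\cP)$, the cut-tree $\cutT=\cutT(\cT,\cP)$: this is a real tree whose points correspond to the fragments ever produced by the logging process, with the genealogical structure recording which fragment splits into which. The mass measure $\mu$ on $\cT$ pushes forward to a probability measure on $\cutT$, and the logging times equip $\cutT$ with edge lengths. The first task is to show that $(\cutT,\text{its mass measure})$ is again a Brownian CRT — this is exactly the continuous analogue of the discrete cut-tree results for conditioned Bienaymé--Galton--Watson and recursive trees cited in the introduction, and presumably the technical heart is to identify its law (e.g.\ via a spinal decomposition, a Markov branching/self-similar fragmentation characterization, or by showing its finite-dimensional marginals are those of the CRT). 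Crucially, the logging process on $\cT$ is encoded \emph{inside} $\cutT$: the fragment alive at time $t$ containing a given mass point $x$ corresponds to a ball in $\cutT$ around the image of $x$, whose radius is governed by $t$.

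**From the cut-tree to Bertoin's excursion.**

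Next I would build an exploration/coding function of $\cutT$ adapted to the time parameter. The idea is that Bertoin's drifted excursion $\be_s-ts$ has a very precise meaning: an excursion of $f_t$ above its running infimum, of length $\ell$, over an interval $[a,b]$, should be exactly the $\mu$-mass-$\ell$ fragment of $\cT$ alive at time $t$ whose "birth record" in $\cutT$ sits at the appropriate place. So I would traverse $\cutT$ in a suitable order — this is the "exploration algorithm of the cut-tree" advertised in the abstract — producing a height/contour-type function $F_{(\cT,\cP)}$ on $[0,1]$ by reading off masses-to-the-right, and simultaneously track how the logging times partition $[0,1]$. One then checks two things: (a) the running-infimum excursions of $(F_{(\cT,\cP)}(s)-ts)_s$ have lengths equal to the fragment masses of $\cT\setminus\cP_t$, for every $t$ simultaneously; and (b) $F_{(\cT,\cP)}$ is a standard Brownian excursion. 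Point (b) should follow from step one: since $\cutT$ is a Brownian CRT and the exploration is the "right" one, $F$ is (half of) its contour/height function, hence a Brownian excursion; point (a) is a bookkeeping identity relating the infimum process of a drifted contour to the fragmentation, which in the discrete setting is elementary and should pass to the limit or be proved directly in the continuum. This yields part (i).

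**The converse direction.**

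For part (ii), the strategy is to invert the construction. Given $\be$ and an independent i.i.d.\ uniform sequence $(U_i)$, one reconstructs a candidate cut-tree as the Brownian CRT $\cutT$ coded by $\be$, then uses the $(U_i)$ to resample a Poissonian rain on it, and finally "un-explores" to recover $(\cT,\cP)$: the logging structure recorded along $\cutT$ tells us how to glue the fragments back into the original CRT $\cT$ together with its rain $\cP$, and measurability with respect to $(\be,(U_i))$ is built in. The point is that the map $(\cT,\cP)\mapsto(F_{(\cT,\cP)},\text{auxiliary randomness})$ from (i) is a bijection (up to null sets) onto (excursion)$\times$(i.i.d.\ uniforms), with measurable inverse; the uniforms are needed to encode the extra information in $\cP$ beyond what the masses see — essentially the locations of the cuts on the skeleton, which the exploration loses — so one checks that, conditionally on $F_{(\cT,\cP)}=\be$, this residual randomness is exactly an independent i.i.d.\ uniform sequence, and then defines the inverse map accordingly so that $F_{(\cT,\cP)}=\be$ holds a.s.

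**Main obstacle.**

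I expect the main difficulty to be step one combined with the "for every $t$ simultaneously" requirement in (i). Identifying the law of the cut-tree of the Brownian CRT as a Brownian CRT, \emph{together with a joint description of the mass measure, the edge lengths, and the embedding of the time-$t$ fragments as balls}, is delicate: one must control all these structures coherently, not just the marginal law of the tree. Proving the a.s.\ identity of the two fragmentation processes for all $t$ at once (rather than for fixed $t$, which would only give equality in distribution) forces a genuinely pathwise argument — likely via a discrete approximation by cut-trees of large Bienaymé--Galton--Watson trees where the analogue of Bertoin's identity is combinatorially transparent, followed by a convergence argument (Gromov--Hausdorff--Prokhorov for the trees, uniform control of the drifted contour functions) — and making that limit rigorous, including the exchange of "explore" and "log" operations, is where the real work lies.
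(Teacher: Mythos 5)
Your high-level skeleton — pass through the cut-tree, then explore it and read off masses — matches the paper's strategy, and Bertoin--Miermont's theorem that $\cutT$ is again a Brownian CRT is indeed the starting point. But there are two genuine gaps in the middle that are not bookkeeping.

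First, you assume that the exploration function $F_{(\cT,\cP)}$ is ``(half of) its contour/height function'' of $\cutT$, from which Brownianity would be automatic. That is not what happens. The paper's Pac-Man algorithm produces, for each $h\in[0,1]$, a target point $\pi_h\in\cutT$ and sets $F(h)=\sum_k \tau_{B_k}\,\nu(\cutT_{B_k}^{L_{k-1}})$, a sum over the ``eaten'' subtrees of their masses weighted by the times $\tau_{B_k}$ derived from the labelling \eqref{eq:deft}. This is \emph{not} the contour or height function of $\cutT$ (it is not a metric functional of the tree at all — it uses the time labelling), and so its Brownianity is not free. The paper proves it separately: $F$ is shown to be a.s.\ continuous by pathwise comparison with distances in $\cutT$, and then to be a Brownian excursion via a recursive distributional characterization (Proposition~\ref{prop:charaterization}), relying on a result of Chassaing--Janson and on the $1/2$-stable subordinator description of $(P_t(F))_t$ from Bertoin. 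Without the specific construction and the time weighting $\tau_{B_k}$, the claimed pathwise matching of excursion lengths with fragment masses for \emph{all} $t$ simultaneously (Proposition~\ref{prop:tailles}) has no mechanism driving it — and this is the entire content of part (i).

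Second, for the hard part you fall back on ``a discrete approximation by cut-trees of large Bienaymé--Galton--Watson trees \dots\ followed by a convergence argument.'' The paper explicitly rejects this route (``how to make such statements precise in the realm of continuous trees remains unclear. For this reason, we use a different route'') precisely because the discrete Prim-order coupling (Broutin--Marckert, Marckert--Wang) does not pass to the limit in a straightforward pathwise way; indeed, as the paper remarks, the discrete and continuous pictures differ structurally (discrete cut-tree labellings are random given the tree, continuous ones are deterministic). So the convergence argument you sketch is not merely technical but is the obstruction the paper was designed to circumvent. For part (ii), your intuition that the uniforms must supply the information lost in passing to the cut-tree is correct in spirit, but the concrete mechanism — routing variables in the sense of Addario-Berry--Dieuleveut--Goldschmidt, half of which are already encoded in $\be$ and the other half of which must be sampled from the independent uniforms — is missing, and without Theorem~\ref{thm:ADG} there is no measurable inverse map.
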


Some comments are in order. In (i), the construction of $F_{(\cT,\cP)}$ from ${(\cT,\cP)}$ is explicit using the so-called ``Pac-man algorithm'', which we briefly describe below. In (ii), unlike (i), $\mathbb{e}$ alone is not enough to build $(\cT,\cP)$: an additional independent source of randomness is needed (see Sec.~\ref{sec:recover} for details).

Let us first give some underlying intuition. In the discrete world of finite trees, it turns out that there is a beautiful explicit exact relation, due to Broutin \& Marckert \cite{BM16}, between masses of components obtained after removing edges one after the other, and lengths of excursions above its running infimum of a certain function. The idea is to use the so-called Prim order to explore an edge-labelled tree.  Let us explain this in more detail.

Consider a rooted tree $T_n$  with $n$ vertices, whose edges are labelled from $1$ to $n-1$. Its vertices $u_1, \ldots, u_n$ listed in Prim order are defined as follows: $u_1$ is the root of the tree and, for every $i \in \llbracket 1, n-1 \rrbracket$, $u_{i+1}$ is the vertex, among all children of a vertex of $\{u_j, j \leq i \}$, whose edge to its parent has minimum label.

\begin{figure}[h!]
\begin{minipage}{0.95\linewidth}
\begin{center}
\begin{tikzpicture}
\draw (-1,2) -- (-1.5,1) node [midway, right] {$5$} -- (0,0) node [midway, below] {$9$} -- (0,1) node [midway, right] {$4$} (0,0) -- (1.5,1) node [midway, below] {$11$} (-2,3) -- (-2,2) node [midway, right] {$8$} -- (-1.5,1) node [midway, right] {$12$} -- (-2.5,2) node [midway, left] {$1$} (-.5,2) -- (0,1) node [midway, left] {$3$} -- (.5,2) node [midway, right] {$7$} -- (.5,3) node [midway, right] {$14$} (0,4) -- (.5,3) node [midway, left] {$10$} -- (1,4) node [midway, right] {$13$} (1,2) -- (1.5,1) node [midway, left] {$2$} -- (2,2) node [midway, right] {$6$};
\draw[fill=white] (0,0) circle (.2);
\draw[fill=white] (-1.5,1) circle (.2);
\draw[fill=white] (0,1) circle (.2);
\draw[fill=white] (1.5,1) circle (.2);
\draw[fill=white] (-1,2) circle (.2);
\draw[fill=white] (-2.5,2) circle (.2);
\draw[fill=white] (-2,2) circle (.2);
\draw[fill=white] (-2,3) circle (.2);
\draw[fill=white] (.5,2) circle (.2);
\draw[fill=white] (-.5,2) circle (.2);
\draw[fill=white] (.5,3) circle (.2);
\draw[fill=white] (0,4) circle (.2);
\draw[fill=white] (1,4) circle (.2);
\draw[fill=white] (1,2) circle (.2);
\draw[fill=white] (2,2) circle (.2);
\draw (-.75,.5) node[cross,rotate=60,scale=3] {};
\draw (.75,.5) node[cross,rotate=-60,scale=3] {};
\draw (-1.75,1.5) node[cross,rotate=25,scale=3] {};
\draw (.5,2.5) node[cross,rotate=0,scale=3] {};
\draw (.75,3.5) node[cross,rotate=-25,scale=3] {};
\draw (.25,3.5) node[cross,rotate=25,scale=3] {};
\end{tikzpicture}
\qquad 
\begin{tikzpicture}[every node/.style={scale=0.7}]
\draw (-1,2) -- (-1.5,1) (0,0) -- (0,1) (1.5,1) (-2,3) -- (-2,2) (-1.5,1) -- (-2.5,2) (-.5,2) -- (0,1) -- (.5,2) (1,2) -- (1.5,1) -- (2,2);
\draw[fill=white] (0,0) circle (.2);
\draw[fill=white] (-1.5,1) circle (.2);
\draw[fill=white] (0,1) circle (.2);
\draw[fill=white] (1.5,1) circle (.2);
\draw[fill=white] (-1,2) circle (.2);
\draw[fill=white] (-2.5,2) circle (.2);
\draw[fill=white] (-2,2) circle (.2);
\draw[fill=white] (-2,3) circle (.2);
\draw[fill=white] (.5,2) circle (.2);
\draw[fill=white] (-.5,2) circle (.2);
\draw[fill=white] (.5,3) circle (.2);
\draw[fill=white] (0,4) circle (.2);
\draw[fill=white] (1,4) circle (.2);
\draw[fill=white] (1,2) circle (.2);
\draw[fill=white] (2,2) circle (.2);
\draw (0,0) node{$1$};
\draw (0,1) node{$2$};
\draw (-.5,2) node{$3$};
\draw (.5,2) node{$4$};
\draw (-1.5,1) node{$5$};
\draw (-2.5,2) node{$6$};
\draw (-1,2) node{$7$};
\draw (1.5,1) node{$8$};
\draw (1,2) node{$9$};
\draw (2,2) node{$10$};
\draw (-2,2) node{$11$};
\draw (-2,3) node{$12$};
\draw (.5,3) node{$13$};
\draw (0,4) node{$14$};
\draw (1,4) node{$15$};
\end{tikzpicture}
\qquad 
\raisebox{0.5\height}{
\begin{tikzpicture}[scale=.22]
\draw[->] (0,0) -- (16,0);
\draw[->] (0,-8) -- (0,2);
\draw[dashed,red] (4,-1) --(4,2);
\draw[dashed,red] (7,-2) --(7,2);
\draw[dashed,red] (10,-3) --(10,2);
\draw[dashed,red] (12,-4) --(12,2);
\draw[dashed,red] (13,-5) --(13,2);
\draw[dashed,red] (14,-6) --(14,2);
\draw[dashed,red] (15,-7) --(15,2);
\draw[fill=black] (0,0) circle (.2);
\draw[fill=black] (1,0) circle (.2);
\draw[fill=black] (2,1) circle (.2);
\draw[fill=black] (3,0) circle (.2);
\draw[fill=black] (4,-1) circle (.2);
\draw[fill=black] (5,0) circle (.2);
\draw[fill=black] (6,-1) circle (.2);
\draw[fill=black] (7,-2) circle (.2);
\draw[fill=black] (8,-1) circle (.2);
\draw[fill=black] (9,-2) circle (.2);
\draw[fill=black] (10,-3) circle (.2);
\draw[fill=black] (11,-3) circle (.2);
\draw[fill=black] (12,-4) circle (.2);
\draw[fill=black] (13,-5) circle (.2);
\draw[fill=black] (14,-6) circle (.2);
\draw[fill=black] (15,-7) circle (.2);
\end{tikzpicture}}
\end{center}
\end{minipage}
      \caption{\label{fig:fig1}
      Left: an edge-labelled plane tree. Middle: the forest obtained by labelling the vertices in Prim order and removing the $6$ edges with largest labels. Right: the Prim path of the forest explored using the Prim order.}
\end {figure}
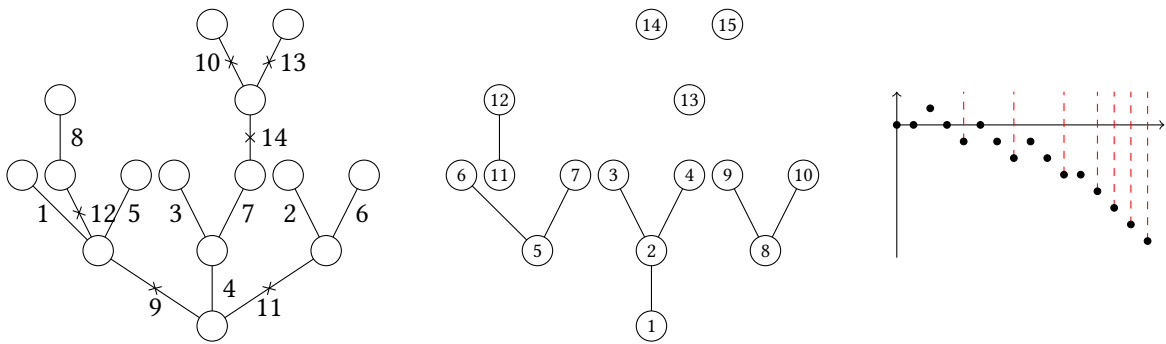

Let $T_n$ be a Cayley tree with $n$ vertices (that is, a tree with $n$ vertices labelled from $1$ to $n$, rooted at $1$), whose edges are labelled from $1$ to $n-1$ uniformly at random conditionally on $T_n$. Let $(u_i)_{1 \leq i \leq n}$ be the vertices of $T_{n}$ sorted according to the Prim order. For every $1 \leq i \leq n$ and $1 \leq k \leq n-1$, let $X_i(k)$ be the number of children of $u_i$ in the forest $F_{n}(k)$ obtained by deleting all  edges with labels belonging to $\llbracket n-k, n-1 \rrbracket$. Finally, set $S_x(k) := \sum_{i=1}^{\lfloor x n \rfloor} (X_i(k)-1)$ for every $0 \leq x \leq 1$ (which is called the Prim path of the forest explored in Prim order). Then Broutin \& Marckert \cite{BM16} establish that:
\begin{enumerate}[noitemsep,nolistsep]
\item[--] the lengths of excursions  of $(S_{x}(k))_{0 \leq x \leq 1}$ above its running infimum are equal to the sizes of the connected components of $F_{n}(k)$ (see Fig.~\ref{fig:fig1}).
\item[--] the following convergence holds in $\D\left( \R_+, \D\left( [0,1], \R \right) \right)$:
\begin{align*}
\left(\frac{\left(S_x(\lfloor t n \rfloor)\right)_{x \in [0,1]}}{\sqrt{n}}\right)_{t \geq 0}  \underset{n \rightarrow \infty}{\overset{(d)}{\rightarrow}} \left((\be_x-t x)_{x \in [0,1]}\right)_{t \geq 0},
\end{align*}
where, for $I \subset \R \cup \{ \pm \infty \}$ an interval and $E$ a metric space, $\D(I,E)$ denotes the space of càdlàg functions from $I$ to $E$ endowed with the $J_{1}$ Skorokhod topology (we refer to Annex $A2$ in \cite{Kal02} for further definitions and details).
\end{enumerate}
Using the fact that the Aldous-Pitman fragmentation is the continuum analog of this discrete logging procedure, this gives another proof of the fact that $X_{\mathsf{B}}$ and $X_{\mathsf{AP}}$ have the same distribution. This also indicates that if one couples $X_{\mathsf{B}}$ and $X_{\mathsf{AP}}$, then the Brownian excursion appearing in the definition of $X_{\mathsf{B}}$ should represent, in a certain sense, the encoding of the exploration of a Brownian CRT equipped with a Poissonian rain using an associated Prim order.

Also, still in the discrete world of finite trees, Marckert \& Wang \cite{MW19} couple a uniform Cayley tree with $n$ vertices and edges labelled from $1$ to $n-1$ uniformly at random with a uniform Cayley tree equipped with an independent uniform decreasing edge-labelling (i.e.~labels decrease along paths directed from the root towards the leaves) in such a way that edge removals give the same sizes of connected components. A connection with discrete cut-trees is also given (see \cite[Sec.~3]{MW19}). Indeed, it turns out that this second tree is closely related to the cut-tree of the first one, which allows Marckert \& Wang to give a nice simple proof of the well-known fact that the number of cuts needed to isolate a uniform vertex in a uniform Cayley tree, scaled by $\sqrt{n}$, converges in distribution to a Rayleigh random variable (see  also \cite{MW19} for other connections between the standard additive coalescent and other combinatorial and probabilistic models such as size biased percolation and parking schemes in a tree).

However, how to make such statements precise in the realm of continuous trees remains unclear. For this reason, we use a different route to define a coupling between  $X_{\mathsf{B}}$ and $X_{\mathsf{AP}}$.
The main tool in the proof of Theorem \ref{thm:couplage} is the use of the so-called cut-tree $\cutT$, defined by Bertoin and Miermont in \cite{BM13}, which roughly speaking encodes the genealogy of the fragmentation of a Brownian CRT by a Poissonian rain. Indeed, one of our contributions is to use the cut-tree to define the ``Bertoin'' excursion $F_{(\cT,\cP)}$ from the Aldous-Pitman fragmentation by using an algorithm which we call the ``Pac-Man'' algorithm,  roughly described as follows (see Sec.~\ref{ssec:pacman} for a precise definition). With every value $h \in [0,1]$, using a local exploration procedure, we associate a final target point of $\cutT$. Then the value $F_{(\cT,\cP)}(h)$ is defined using the genealogy of this exploration. Strictly speaking, the cut-tree $\cutT$ is defined from $(\cT,\cP)$ by using additional randomness (namely a collection of independent i.i.d.~points sampled according to the mass measure). However, the quantity $F_{(\cT,\cP)}$ can be directly defined as a measurable function of $(\cT,\cP)$ without any reference to the cut-tree, which still serves as a useful tool to check that  $F_{(\cT,\cP)}$ satisfies the desired properties (see Sec.~\ref{ssec:proof} for details).

Conversely, given the ``Bertoin'' excursion, we will also see that coupling $X_{\mathsf{B}}$ and $X_{\mathsf{AP}}$ is closely related to the question of reconstruction of the original CRT from its cut-tree (see Sec.~\ref{sec:recover} for more details). 

Quite interestingly it seems that, while there is no simple analog of the coupling between drifted excursions and sizes of connected components using Prim's order in the continuous framework, there is no simple analog of the coupling between drifted excursions and sizes of connected components using the cut-tree in the  discrete framework. Indeed, in Marckert \& Wang's coupling, given the Cayley tree, the decreasing edge labeling is random, while in the continuous framework, given the cut-tree, its labelling is deterministic; see Remark \ref{rem:labels}. {In particular, new ideas and techniques are required to analyze the continuous framework.}

Finally, let us mention that the question of reconstructing the Brownian CRT from the ``Bertoin'' excursion appears in an independent work by Nicolas Broutin and Jean-Fran\c cois Marckert \cite{BM23} in the different context of the study of scaling limits of minimal spanning trees on complete graphs.

\paragraph{Perspectives.}  There has been some recent developments in studying analogs of the Aldous-Pitman fragmentation for different classes of trees and their associated cut-trees, see \cite{Die15,BW17b,BH20,BHW22,ADG19}. In particular, it has been shown that by an appropriate tuning (fragmentation alongs the skeleton and/or at nodes), the law of the cut-tree is equal to the law of the original tree.

We expect that our coupling can be extended to more general classes of trees, with fragmentation on the skeleton only, such as stable trees (the study of this fragmentation is mentioned in \cite[Section $5$, (6)]{ADG19}). Indeed, for stable trees, it is known \cite{BH20} that the analog of Aldous-Pitman fragmentation can be obtained using the normalized excursion of a stable process. One of the main issues is that in this case the associated cut-tree is not compact anymore; hence, our main argument, which consists in comparing distances in this cut-tree, does not apply directly. Furthermore, new results (see \cite{Wan22}) suggest strong connections between the so-called ICRT (Inhomogeneous Continuum Random Trees) and stable trees. Thus similar techniques could work in both cases, and it is plausible that analog couplings exist. We plan to investigate this in future work.

\paragraph{Acknowledgments.} I.K. thanks Jean Bertoin for asking him if there is a connection between the two seemingly different constructions $X_{\mathsf{B}}$ and $X_{\mathsf{AP}}$, and we thank Jean-Fran\c cois Marckert for mentioning the use of cut-trees in \cite{MW19}. We also  thank the  anonymous referees for their  careful review of the paper and for their constructive comments which have greatly contributed to improve the paper.

\paragraph{Overview of the paper.} Section \ref{sec:cuttree} is devoted to the definition of our main tool, the cut-tree associated to the fragmentation of the Brownian CRT; we also prove there some preliminary structural results on this object. In Section \ref{sec:coupling}, we prove the first part of our main result, Theorem \ref{thm:couplage}, with the help of our so-called Pac-Man algorithm. Finally, we prove Theorem \ref{thm:couplage} (ii) in Section \ref{sec:recover}, essentially making use of results from \cite{ADG19}.

\tableofcontents

\section{The cut-tree of the Brownian CRT}
\label{sec:cuttree}

An important object in our study is the cut-tree of a Brownian CRT, which roughly speaking encodes the genealogy of its fragmentation by a Poissonian rain. We recall here its construction and main properties, and refer to \cite{BM13,ADG19} for details and proofs.

\subsection{Definitions}
\label{ssec:def} 

Let us first introduce some definitions and notation for trees.

\paragraph{Real trees.} We say that a complete metric space $(T,d)$ is a real tree if, for every $u,v \in T$:
\begin{itemize}
\item there exists a unique isometry $f_{u,v}: [0,d(u,v)] \rightarrow T$ such that $f_{u,v}(0)=u$ and $f_{u,v}(d(u,v))=v$.
\item for any continuous injective map $f:[0,1] \rightarrow T$ such that $f(0)=u$ and $f(1)=v$, we have $f([0,1]) = f_{u,v}([0,d(u,v)]) =: \llbracket u, v \rrbracket$.
\end{itemize}
A rooted real tree is a real tree with a distinguished vertex, called the root.

\paragraph{Tree structure.} Let $\cT$ be a real tree. We say that a point $x \in \cT$ is a leaf if $\cT \backslash \{ x \}$ is connected, and a branchpoint if $\cT \backslash \{ x \}$ has at least three connected components.   We  denote by $\cB(\cT)$ the set of all branchpoints of the tree $\cT$ and  by $\varnothing$ the root of $\cT$. 

We let $\Sk(\cT)$ be the skeleton of $\cT$, that is, the set of all points of $\cT$ that are not leaves nor branchpoints. We emphasize that this definition differs from the usual one, where the skeleton is defined as the complement of the set of leaves. The reason is that in the sequel of the paper, we usually need to treat differently leaves, branchpoints, and non-branchpoints which are not leaves.

We denote by $\cT_{x}$ the tree which is the set of all (weak) descendents of $x$ in $\cT$, rooted at $x$. If the tree $\cT$ is binary (that is, for all $x \in \cT$, $\cT \backslash \{ x \}$ has at most three connected components, and $\cT \backslash \{ \varnothing \}$ has at most two), when $x$ is an ancestor of $y$, we denote by $\cT^{y}_{x}$ the subtree above $x$ containing $y$, rooted at $x$ and by $\bar{\cT}^{y}_{x}$ the subtree above $x$ \emph{not} containing $y$, rooted at $x$ (which is unique if it exists). 

Furthermore, for any two vertices $x,y \in \cT$, we write $x \preceq y$ when $x$ is an ancestor of $y$, and $x \prec y$ when $x \preceq y$ and $x \neq y$. In particular $\varnothing \preceq x$ for every $x \in \cT$ and $\preceq$ is a partial order on $\cT$ called the genealogical order.

Finally, for $x, y \in \cT$, we denote by $x \wedge y$ their closest common ancestor, that is, the unique $z \in \cT$ satisfying $\llbracket \varnothing, x \rrbracket \cap \llbracket \varnothing, y \rrbracket = \llbracket \varnothing, z \rrbracket$.

\paragraph{Brownian excursion and Brownian tree.}

The Brownian CRT, introduced by Aldous \cite{ald1,ald2,ald3}, is a random real tree constructed from twice a standard Brownian excursion $\be: [0,1] \rightarrow \R_+$ the following way. The function $2 \be$ induces an equivalence relation $\sim_{2 \be}$ on $[0,1]$: define a pseudo-distance $d$ on $[0,1]$ by setting $d(u,v) = 2 \be_u + 2 \be_v - 2 \min_{[u,v]} 2 \be$, and say that, for all $0 \leq u,v \leq 1$, $u \sim_{2\be} v$ if and only if $d(u,v)=0$. Define now $\cT \coloneqq [0,1] / \sim_{2\be}$, endowed with the distance which is the projection of $d$ on the quotient space (which we also denote by $d$ by convenience). It is standard that $(\cT,d)$ is a real tree, called the Brownian CRT.

\paragraph{Length and mass measures.}

For any real tree $(T,d)$, observe that the distance $d$ on $T$ induces a length measure $\lambda$ on $\Sk(T)$, defined as the only $\sigma$-finite measure such that $\lambda(\llbracket u, v \rrbracket) = d(u,v)$ for all $u,v \in T$. 
In the case of the Brownian CRT $(\cT,d)$, we can furthermore endow it with a mass measure $\mu$, which is the pushforward of the Lebesgue measure on $[0,1]$ by the quotient map $[0,1] \rightarrow [0,1] / \sim_{2 \be}$. Roughly speaking, $\mu$ accounts for the proportion of leaves in a given component of $\cT$.

\subsection{Construction of the Brownian cut-tree}

\label{ssec:cuttree}

We first need some notation. Let $\cT$ be a  Brownian CRT, $\mu$ its mass measure and let $\cP$ be a Poissonian rain of intensity $\mathrm{d}\lambda \otimes \mathrm{d}t$, where $\lambda$ denotes the length measure on $\Sk(\cT)$ and $\mathrm{d}t$ is the Lebesgue measure on $\R_+$. For every $t \geq 0$, we set
\begin{align*}
\cP_t := \{ c \in \Skel(\cT), \exists \, s \in [0,t], (c,s) \in \cP \}.
\end{align*}
The elements of $\cP_\infty \coloneqq \cup_{t \geq 0} \cP_t$ are called \emph{cutpoints}. For every $t \geq 0$ and $ x \in \cT$, we denote by $\cT_{t}(x)$ the connected component of $ \cT \backslash \cP_{t}$ containing $x$ and $\mu_{t}(x)=\mu(\cT_{t}(x))$ its $\mu$-mass. If $x \in \cP_t$, we set $\cT_t(x)=\varnothing$ and $\mu_t(x)=0$.

Let $U_{0}=\varnothing$ be the root of $\cT$, and let $(U_i)_{i \geq 1}$ be a sequence of i.i.d.~leaves of $\cT$ sampled according to the mass measure $\mu$, independently of $\cP$. For every $i,j \in \Z_{+}$, we let $t_{i,j} \coloneqq \inf\{ t \geq 0, \cT_t(U_i) \neq \cT_t(U_j) \}$ be the first time a cutpoint appears on $\llbracket U_{i},U_{j} \rrbracket$. Then by \cite{BM13, ADG19} there exists almost surely a metric space $C^\circ := (C^\circ, d^\circ, \rho)$ containing the set $\{ \rho \} \cup \Z_{+}$ such that $C^\circ = \bigcup_{i \in \Z_{+}} \llbracket \rho, i \rrbracket $ and for every $i,j \in \Z_{+}$,
\[  d^\circ(\rho, i)=\int_0^\infty \mu_{s}(U_{i}) \, \mathrm{d}s \qquad \mathrm{and} \qquad   d^\circ(i,j)=\int_{t_{i,j}}^\infty \left(\mu_s(U_i) +  \mu_s(U_j) \right)\mathrm{d}s.
\]
We denote by $\cutT \coloneqq (\cutT ,d_{\cutT},\rho)$ the completion of this metric space, which is a real tree called the cut-tree. The sequence $(i)_{i \geq 1}$ is dense in $ \mathcal{C}$, and  in particular every branchpoint of $ \mathcal{C}$ can be written (non uniquely) as $i \wedge j$ with $i,j \geq 1$. We also endow the set of leaves of $\cutT$ with the measure $\nu$ defined as:
\begin{equation}
\label{eq:defnu}
\nu= \lim_{n \rightarrow \infty} \frac{1}{n} \sum_{i=1}^n \delta_i.
\end{equation}
For a measurable subset $\mathcal{A} \subset \cC$ with $\nu(\mathcal{A})>0$ we use the notation $\nu_{\mathcal{A}}$ for the probability measure on $\cC$ defined by $\nu_{\mathcal{A}}(B)=\nu(\mathcal{A} \cap B)/\nu(\mathcal{A})$.

The main result of \cite{BM13} is the following:

\begin{thm}[Bertoin \& Miermont \cite{BM13}]
\label{thm:cuttreecrtiscrt}
The cut-tree of the Brownian CRT has the law of a Brownian CRT:
\begin{align*}
\mathcal{C} \overset{(d)}{=} \mathcal{T}.
\end{align*}
\end{thm}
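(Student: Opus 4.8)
The plan is to verify that the random real tree $\cC$ defined in Section~\ref{ssec:cuttree} has the distribution of a Brownian CRT by checking that its "finite-dimensional marginals" — the subtrees spanned by $\rho$ and the first $n$ sampled points $1,\dots,n$, together with the measure $\nu$ — agree with those of the CRT. Recall that the Brownian CRT is characterized (following Aldous) by the law of the discrete tree shape together with the edge-length vector obtained by spanning $n$ i.i.d.\ $\mu$-leaves: for the CRT $\cT$ coded by $2\be$, this spanning tree is the so-called "line-breaking" object, with the explicit density on $\R_+^{2n-1}$ that Aldous computed. So it suffices to show that, for every fixed $n$, the combinatorial tree $R_n^\circ$ spanned by $\{\rho,1,\dots,n\}$ in $\cC$, equipped with the lengths $d^\circ(i\wedge j,\,k\wedge\ell)$ read off from the formulas $d^\circ(\rho,i)=\int_0^\infty\mu_s(U_i)\,ds$ and $d^\circ(i,j)=\int_{t_{i,j}}^\infty(\mu_s(U_i)+\mu_s(U_j))\,ds$, has exactly the same joint law as the corresponding spanning tree of an independent Brownian CRT under $\mu$-sampling, and moreover that $\nu$ plays the role of the mass measure (which follows once one checks that, given $\cC$, the points $1,2,\dots$ are i.i.d.\ $\nu$-distributed — this is where the exchangeability built into the construction, via de Finetti, enters).

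The key computation is therefore the law of the $\mu$-mass processes $\bigl(\mu_t(U_i)\bigr)_{t\ge0,\,0\le i\le n}$ and of the branching times $t_{i,j}$. The strategy I would follow is an inductive (in $n$) "add one leaf at a time" argument. Condition on the configuration for $U_0,\dots,U_{n-1}$: at each time $t$ the tree $\cT\setminus\cP_t$ is a forest, $U_n$ is a fresh $\mu$-sample, so conditionally on which component it lands in, it is a uniform $\mu$-pick from that component. The time $t_{n,j}$ at which $U_n$ separates from an earlier $U_j$ is governed by the Poissonian rain restricted to the path $\llbracket U_n, U_j\rrbracket$, whose $\lambda$-length is itself a known functional of the CRT; and the self-similarity / fragmentation identity of Aldous–Pitman (the rescaled components of the CRT cut on their skeleton are again CRTs) lets one describe the continuation after the split. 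Matching this recursion against the recursive description of the CRT's line-breaking construction — i.e.\ checking that attaching the $(n{+}1)$-st point produces the right conditional law of (attachment location, new edge length) — is the technical heart. I expect the bookkeeping here to be somewhat involved but essentially a matter of carefully transcribing the Aldous–Pitman fragmentation semigroup; one should lean heavily on the explicit computations already carried out in \cite{AP98} and \cite{BM13,ADG19} rather than redo them.

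The main obstacle, and the reason the theorem is genuinely a theorem rather than a formal check, is twofold. First, one must show that the metric space $\cC$ obtained by completing $\bigcup_i\llbracket\rho,i\rrbracket$ is actually a compact real tree and that $\nu$ is supported on its leaves — i.e.\ that nothing pathological happens in the limit, e.g.\ that the spanning trees $R_n^\circ$ are tight and their Gromov–Hausdorff–Prokhorov limit exists and is a.s.\ a CRT; this requires controlling $\sup_i d^\circ(\rho,i)$ and the modulus of the spanning trees, using that $\int_0^\infty \mu_s(U_i)\,ds$ has the same (finite, with good tails) law as a CRT height. Second, and more subtly, the construction of $\cC$ uses the extra randomness $(U_i)_{i\ge1}$, so one must argue that the resulting law does not depend on that choice in a way that breaks the CRT identity — concretely, that the pair $(\cC,\nu)$ together with the re-sampling of i.i.d.\ $\nu$-points reproduces the same family of finite marginals, which is exactly the consistency/exchangeability statement that makes the de Finetti step legitimate. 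Once both points are in place, the finite-dimensional agreement of the spanning trees upgrades (by Aldous's characterization, or equivalently by the characterization of the GHP limit) to the distributional identity $\cC \overset{(d)}{=} \cT$.
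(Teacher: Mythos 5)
The paper does not prove this theorem: it is imported directly from Bertoin and Miermont~\cite{BM13} and invoked as a black box, so there is no internal proof to compare your sketch against. That said, your route differs genuinely from the one in the cited reference. Bertoin and Miermont establish $\cC\overset{(d)}{=}\cT$ by \emph{discrete approximation}: they prove that for conditioned critical Galton--Watson trees $T_n$ the rescaled pair $\bigl(T_n,\mathrm{Cut}(T_n)\bigr)$ converges jointly (in the Gromov--Hausdorff--Prokhorov sense) to $(\cT,\cC)$, and separately identify the rescaled discrete cut-trees as converging to a Brownian CRT; the distributional identity then falls out of the joint limit. This trades the continuous fragmentation computations for discrete combinatorics plus a tightness and convergence argument. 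Your plan is the direct continuous route: read off the spanning-tree lengths from the explicit formulas for $d^\circ$, compare with Aldous's line-breaking marginals, and use the self-similarity of the Aldous--Pitman fragmentation to run an ``add one leaf at a time'' recursion. This is a coherent alternative, close in spirit to the exchangeability and routing analysis of~\cite{ADG19}, and you correctly flag compactness of $\cC$ and the $\nu$-resampling consistency as genuine obstacles. But the ``technical heart'' you defer is carrying almost all the weight: the joint law of the tagged masses $\bigl(\mu_t(U_i)\bigr)_{0\le i\le n}$ together with the separation times $t_{i,j}$ is not a formal consequence of the fragmentation semigroup (the tagged particles are strongly dependent), and verifying that the post-split conditional law reproduces exactly the line-breaking density is a substantial computation, not a transcription exercise. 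As a sketch your plan is reasonable, but it is neither complete nor the argument that the cited source actually gives.
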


The cut-tree $\cutT$ encodes the genealogical structure, as time increases,  of the cuts of the subtrees which contain the points $(U_{i})_{i \geq 0}$, in such a way that branchpoints of $\cutT$ are in correspondence with $ \mathcal{P}_{\infty}$. Let us  make this  more explicit.

Informally speaking, for every $i,j \in \Z_{+}$, the branchpoint $i \wedge j$ of $\cutT$ encodes the (a.s. unique) cutpoint appearing on $\llbracket U_{i}, U_{j} \rrbracket$ at  time  $t_{i,j}$. The subtree  of $\cutT$ above $i \wedge j$ containing $i$  (resp. $j$) is then the cut-tree of the subtree of $ \mathcal{T} \backslash \mathcal{P}_{t_{i,j}}$ containing $U_{i}$ (resp.~$U_{j}$). 

The measures $\mu$ on $\mathcal{T}$ and $\nu$ on $\cutT$ are related in the following way (see \cite[Proposition $7$]{ADG19}):
\begin{equation}
\label{eq:lienmunu} \mu ( \mathcal{T}_{t_{i,j}}(U_{i}))= \nu( \cutT^{i}_{i \wedge j}), \qquad  \mu ( \mathcal{T}_{t_{i,j}}(U_{j}))= \nu( \cutT^{j}_{i \wedge j}).
\end{equation}
In particular, using the fact that $\cutT$ is binary, since $\mathcal{T}_{t_{i,j}-}(U_{i})= \mathcal{T}_{t_{i,j}-}(U_{j})= \mathcal{T}_{t_{i,j}}(U_{i}) \cup  \mathcal{T}_{t_{i,j}}(U_{j}) \cup \{ i \wedge j \}$,  we have $\mu(\mathcal{T}_{t_{i,j}-}(U_{i}))=\nu( \mathcal{C}_{i \wedge j})$. 

The leaf $0 \in \mathcal{C}$ will play a distinguished role in the sequel. In the terminology of \cite[Section 3.3]{ADG19}, it can be seen as the ``image'' in $\mathcal{C}$ of the root $\varnothing$ of $\cT$ in the following sense: if $(U_{i_{n}})_{n \geq 1}$ is a sequence converging to $\varnothing$, then $i_{n}$ converges to $0$ (this is shown in  \cite[Section 3.3]{ADG19}). Similarly, for every branchpoint $x$ of $\cutT$, each of the two subtrees grafted on $x$ comes with a distinguished leaf. Indeed, consider
$x \prec y$ two points of $\mathcal{C}$ with $x$ a branchpoint. Recall that $\mathcal{C}^{y}_{x}$ denotes the subtree above $x$ containing $y$, rooted at $x$ and $\bar{\mathcal{C}}^{y}_{x}$ the subtree above $x$ \emph{not} containing $y$, rooted at $x$. Intuitively speaking,  $\Lambda(\cC^{y}_{x})$  is the image of $x$ obtained by considering the Poissonian rain in the the subtree $\mathcal{C}^{y}_{x}$ rooted at $x$, and $\Lambda(\bar{\cC}^{y}_{x})$  is the image of $x$ obtained by considering the Poissonian rain in the the subtree $\bar{\mathcal{C}}^{y}_{x}$ rooted at $x$.

More formally, we may find $i,j \geq 1$ such that  $x=i \wedge j$, $i \in \mathcal{C}^{y}_{x}$ and  $j \in   \bar{\mathcal{C}}^{y}_{x}$.
Let $c \in \llbracket U_{i},U_{j} \rrbracket$ be the cutpoint appearing at time $t_{i,j}$.
Consider a sequence $(U_{i_{n}})_{n \geq 1}$ of elements of $\cT_{t_{i,j}}(U_{i})$ converging to $c$ and a sequence $(U_{j_{n}})_{n \geq 1}$ of elements of $\cT_{t_{i,j}}(U_{j})$ converging to $c$. Then by \cite[Section 3.3]{ADG19}
the sequence $(i_{n})_{n \geq 1}$ converges in $\cutT^{y}_{x}$ to a leaf denoted by $\Lambda(\cC^{y}_{x})$  (which does not depend on the sequence $(U_{i_{n}})_{n \geq 1}$), and the sequence $(j_{n})_{n \geq 1}$ converges in $\bar{\cutT}^{y}_{x}$ to a leaf denoted by $\Lambda(\bar{\cC}^{y}_{x})$ (which does not depend on the sequence $(U_{j_{n}})_{n \geq 1}$). To see that the limiting points have to be leaves, simply observe that $t_{i_n , i_{n+1}} \underset{n \rightarrow \infty}{\rightarrow} + \infty$, so that 
\begin{align*}
\nu \left( \cutT_{i_n \wedge i_{n+1}}^{i_n} \right) = \mu_{ t_{i_n , i_{n+1}}} \left( U_{i_n}\right) \underset{n \rightarrow \infty}{\rightarrow} 0.
\end{align*}

Finally, setting  for every $x \in \cutT$:
\begin{equation}
\label{eq:deft}\tau_{x}= \int_{\llbracket \rho ,x \rrbracket}  \frac{1}{\nu(\cutT_{z} )} \lambda(\mathrm{d} z),
\end{equation}
we have $t_{i,j}=\tau_{i \wedge j}$ (see e.~g.~the end of the proof of Theorem 16 in \cite{ADG19}). In other words, the times at which cutpoints appear can be recovered from the cut-tree. Observe that $\tau$ is increasing along any branch of $\cutT$.

We end this section with a result which tells how to find the connected components of $\cT \backslash \cP_t$ using the cut-tree.

\begin{lem}
\label{lem:bijectionsubtreescomponents}
Fix $t > 0$. The connected components of $\cT \backslash \cP_t$ are in bijection with the subtrees of $\cutT$ of the form $\cutT_x^{y}$, with $x,y \in \mathcal{C}$ satisfying $\tau_x=t$ and $x \prec y$. Furthermore, this bijection conserves the masses.
\end{lem}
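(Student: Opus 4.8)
\textit{Proof plan.}
Fix $t>0$. The plan is to realise the bijection explicitly through the dense family $(U_i)_{i\ge1}$, using the clock $\tau$ to pinpoint ``level $t$'' inside $\cutT$. Recall that, almost surely, no $U_i$ is a cutpoint, the points $U_i$ are dense in $\cT$, every component of $\cT\setminus\cP_t$ of positive $\mu$-mass contains infinitely many of the $U_i$ (as $(U_i)$ is i.i.d.\ of law $\mu$), and $U_i,U_j$ lie in the same component if and only if $t<t_{i,j}=\tau_{i\wedge j}$. Let $\mathcal K$ be a positive-mass component and pick $i$ with $U_i\in\mathcal K$, so $\mathcal K=\cT_t(U_i)$. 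Since $\mathcal K$ also contains some $U_j$ with $j\neq i$, we have $\tau_{i\wedge j}=t_{i,j}>t$; and since $\tau$ is continuous and strictly increasing along $\llbracket\rho,i\wedge j\rrbracket$ (there the integrand $1/\nu(\cutT_{\cdot})$ is positive and bounded by $1/\nu(\cutT_{i\wedge j})<\infty$) with $\tau_\rho=0$, there is a unique point $z=z(i,t)\in\llbracket\rho,i\wedge j\rrbracket\subset\llbracket\rho,i\rrbracket$ with $\tau_z=t$; note $z\prec i\wedge j$, so $z$ is not a leaf. Set $\Phi(\mathcal K):=\cutT_z^{\,i}$. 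This is well defined: if also $U_{j}\in\mathcal K$ with $j\ne i$, then $t<\tau_{i\wedge j}$, so $z(i,t)$ and $z(j,t)$ both lie on the common segment $\llbracket\rho,i\wedge j\rrbracket$ and coincide by uniqueness, and since $i,j$ then lie in the same subtree above this common point, $\cutT_{z(i,t)}^{\,i}=\cutT_{z(j,t)}^{\,j}$. The map is also injective on components: $\cutT_{z(i,t)}^{\,i}=\cutT_{z(j,t)}^{\,j}$ forces $z(i,t)=z(j,t)=:z$ (the root of such a subtree is recoverable from the set) with $i,j$ in the same subtree above $z$, hence $z\prec i\wedge j$, hence $t_{i,j}=\tau_{i\wedge j}>\tau_z=t$, i.e.\ $U_i$ and $U_j$ lie in the same component.

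The heart of the matter is mass preservation: $\nu(\cutT_z^{\,i})=\mu_t(U_i)$ with the notation above. If $z$ is a branchpoint, write $z=i\wedge k$ with $k$ in the subtree above $z$ not containing $i$ (possible since that subtree has positive $\nu$-mass and the $(k)_{k\ge1}$ are dense); then $\cutT_z^{\,i}=\cutT_{i\wedge k}^{\,i}$, and since $t_{i,k}=\tau_{i\wedge k}=\tau_z=t$, identity \eqref{eq:lienmunu} gives $\nu(\cutT_z^{\,i})=\mu(\cT_{t_{i,k}}(U_i))=\mu_t(U_i)$. If $z$ is not a branchpoint, then $\cutT_z^{\,i}$ coincides with $\cutT_z$ up to the $\nu$-negligible point $z$; choose branchpoints $z_n\in\llbracket z,i\rrbracket\setminus\{z\}$ with $z_n\downarrow z$ (these exist since branchpoints are dense in $\cutT$, which has the law of a Brownian CRT) and write $z_n=i\wedge k_n$. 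The companion relation recalled above yields $\nu(\cutT_{z_n})=\nu(\cutT_{i\wedge k_n})=\mu(\cT_{t_{i,k_n}-}(U_i))=\mu(\cT_{\tau_{z_n}-}(U_i))$. Since $s\mapsto\mu(\cT_s(U_i))$ is nonincreasing and right-continuous and $\tau_{z_n}\downarrow t$, we have $\mu(\cT_{\tau_{z_n}}(U_i))\le\mu(\cT_{\tau_{z_n}-}(U_i))\le\mu(\cT_t(U_i))$ with the leftmost term tending to $\mu(\cT_t(U_i))$, so $\nu(\cutT_{z_n})\to\mu_t(U_i)$; on the other hand the sets $\cutT_{z_n}$ increase to $\cutT_z\setminus\{z\}$, so $\nu(\cutT_{z_n})\to\nu(\cutT_z)$. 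Hence $\nu(\cutT_z)=\mu_t(U_i)=\mu(\mathcal K)$.

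It remains to check surjectivity. Let $\cutT_x^{\,y}$ with $\tau_x=t$ and $x\prec y$ be given; then $x$ is not a leaf, so $\nu(\cutT_x^{\,y})>0$ and, by density, some $i$ lies in $\cutT_x^{\,y}$. Since $x\prec i$ and $\tau$ is strictly increasing along $\llbracket\rho,i\rrbracket$, the point $z(i,t)$ exists and equals $x$, and as $i$ and $y$ lie in the same subtree above $x$ we get $\cutT_x^{\,y}=\cutT_{z(i,t)}^{\,i}$; moreover $\mu_t(U_i)=\nu(\cutT_x^{\,y})>0$ by the previous paragraph, so $\cT_t(U_i)$ is a genuine positive-mass component and $\Phi(\cT_t(U_i))=\cutT_x^{\,y}$. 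Therefore $\Phi$ is a mass-preserving bijection between the positive-mass components of $\cT\setminus\cP_t$ and the subtrees $\cutT_x^{\,y}$ with $\tau_x=t$, $x\prec y$, which is the assertion of the lemma (the $\mu$-null pieces of $\cT\setminus\cP_t$, should there be any, carry no mass and play no role). The only genuinely delicate step is the mass identity at a non-branchpoint $z$: the exact relation \eqref{eq:lienmunu} is available only at branchpoints $i\wedge j$, so it must be transported to an arbitrary skeleton point by approximation along branchpoints $z_n\downarrow z$, which requires controlling simultaneously the one-sided limits of the map $s\mapsto\mu(\cT_s(U_i))$ and the continuity from below of $\nu$ along the nested subtrees $\cutT_{z_n}$; everything else is bookkeeping with the genealogical order and the monotonicity of $\tau$.
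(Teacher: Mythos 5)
Your proposal is correct, and the bijection itself is built essentially the same way as in the paper: both arguments realise $\Phi$ through the dense family $(U_i)$, pinning down a point of $\llbracket\rho,i\rrbracket$ at which the clock $\tau$ equals $t$ (the paper phrases this as taking the most recent common ancestor $x_C$ of $\{i:U_i\in C\}$ and then proving $\tau_{x_C}=t$; you locate $z$ directly by monotonicity of $\tau$, which is equivalent). The paper then exhibits an explicit two-sided inverse $\Psi$, while you argue injectivity and surjectivity separately; these are interchangeable. The genuine point of divergence is the mass-preservation step. The paper dispatches it in one line by observing that $\Phi(C)=\overline{\cup_{i:U_i\in C}\llbracket x_C,i\rrbracket}$, so that $\{i:i\in\Phi(C)\}=\{i:U_i\in C\}$, and then invoking the empirical-measure definitions $\nu=\lim\frac1n\sum\delta_i$ and $\mu=\lim\frac1n\sum\delta_{U_i}$ (plus the fact that $\partial\Phi(C)=\{x_C\}$ is $\nu$-null). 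You instead work directly with the identity \eqref{eq:lienmunu} at branchpoints and then propagate it to an arbitrary skeleton point by squeezing $\mu(\cT_{\tau_{z_n}-}(U_i))$ between $\mu(\cT_{\tau_{z_n}}(U_i))$ and $\mu(\cT_t(U_i))$ along a sequence of branchpoints $z_n\downarrow z$, using right-continuity of $s\mapsto\mu_s(U_i)$ together with continuity from below of $\nu$ along $\cutT_{z_n}\uparrow\cutT_z\setminus\{z\}$. Your route is more hands-on and self-contained (it does not lean on the almost-sure empirical-measure convergence, only on the two displayed mass identities and elementary monotone limits), at the cost of an extra approximation step and an appeal to right-continuity of the component masses, which you should state is a standard consequence of $\cP_s\downarrow\cP_t$ as $s\downarrow t$ and the local finiteness of the Poisson rain on bounded segments; the paper's route is shorter but relies on the construction of $\nu$ as an empirical limit, which your version avoids. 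Both are valid proofs.
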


\begin{proof}
Let $C$ be a connected component of $\cT \backslash \cP_t$, and let $x_C \in \cutT$ be the most recent common ancestor of the leaves $\cL_C \coloneqq \{ i \in \Z_{+}: U_i \in C \}$.
Notice that $x_C$ is not necessarily  a branchpoint (indeed $x_{C}$ belongs to the skeleton when the component $C$ is the same at times $t$ and $t-$).  

 First let us show that $\tau_{x_C}=t$. Since $\cutT$ is a CRT, almost surely there exists a sequence of branchpoints of the form $i_{n} \wedge j_{n}$ converging to $x$ with $i_{n}, j_{n} \in \cL_C $. For all $n$, since $U_{i_{n}}$ and $U_{j_{n}}$ are in the same connected component of  $\cT \backslash \cP_t$ it follows that $\tau_{i_{n} \wedge j_{n}} \geq t$ for every $n \geq 1$. Since $z \mapsto \tau_z$ is continuous, we obtain that $\tau_{x_C} \geq t$. Now assume by contradiction that $\tau_{x_C}>t$. Then, again by continuity of $z \mapsto \tau_z$ and since $\cutT$ is a CRT, there exists a branchpoint $b$ such that $b \prec x_C$ and $\tau_b > t$. Now take two leaves $j \in \overline{\cutT}_b^{x_C}$ and $i \in \cL_C$. We have $i \wedge j = b$ and thus $t_{i,j}=\tau_b>t$, so that $j \in \cL_C$. This contradicts the definition of $x_C$. Hence, $\tau_{x_C} = t$.

Now take $i \in \cL_C $ and set
$$\Phi(C)= \cutT_{x_{C}}^{i}.$$
Let us check that $\Phi$ is well defined by showing that for $i,j \in \cL_C$ we have $\cutT_{x_{C}}^{i}=\cutT_{x_{C}}^{j}$. To this end, observe that by definition $x_C \preceq i \wedge j$, and argue by contradiction assuming that $i \wedge j=x_{C}$. Then $t_{i,j}=\tau_{x_C}=t$, so that $U_{i}$ and $U_{j}$ do not belong to the same connected component of $\cT \backslash \cP_t$. Hence, $i$ and $j$ cannot be both in $\cL_C$, which leads to a contradiction.

Finally, let us establish that $\Phi$ is bijective. To this end, we exhibit the reverse bijection. Consider a subtree of $\cutT$ of the form $\cutT_x^{y}$, with $x \in \mathcal{C}$ satisfying $\tau_x=t$ and $x \prec y$. Consider $i \in \Z_{+}$ such that $i \in \cutT_x^{y}$ and denote by $C$ the connected component of $\cT \backslash \cP_t$ containing $U_{i}$. We set
$$ \Psi(\cutT_x^{y})= C.$$
The map $\Psi$ is well defined since, if  $i,j \in \cutT_x^{y}$ then $t_{i,j}>\tau_x = t$ so that the connected component of $\cT \backslash \cP_t$ containing $U_{i}$ also contains $U_{j}$.

Now, if  $C$ is a connected component of $\cT \backslash \cP_t$, then $\Psi \circ \Phi (C)=C$. Indeed, let $i \in \Z_+$ be such that $U_{i} \in C$. By definition of $\Phi$, $i \in \Phi(C)$. In turn by definition of $\Psi$, $\Psi \circ \Phi(C)$ is the connected component of $\cT \backslash \cP_t$ containing $U_{i}$, which is precisely $C$.

Conversely, consider a subtree of $\cutT$ of the form $\cutT_x^{y}$, with $x \in \mathcal{C}$ satisfying $\tau_x=t$ and $x \prec y$. We check that $\Phi \circ \Psi(\cutT_x^{y})=\cutT_x^{y}$. Let $i \in \Z_+$ be such that $i \in \cutT_x^{y}$. Then $C=\Psi(\cutT_x^{y})$ is the connected component of $\cT \backslash \cP_t$ containing $U_{i}$. It follows that $\Phi(C)= \cutT_{x_{C}}^{i}$. In particular, $x,x_{C} \in \llbracket \rho, i \rrbracket$ and $\tau_{x}=\tau_{x_{C}}=t$. Since $\tau$ is increasing on $ \llbracket \varnothing, i \rrbracket$, it follows that $x=x_{C}$, and thus $ \cutT_x^{y}=\cutT_{x_{C}}^{i}$. This completes the proof.

The fact that this bijection conserves the masses is a consequence of the fact that $\Phi(C)= \overline{\cup_{i : U_{i} \in C} \llbracket x_{C},i \rrbracket}$ for every connected component $C$  of $\cT \backslash \cP_t$ and that $\nu= \lim_{n \rightarrow \infty} \frac{1}{n}\sum_{i=1}^n \delta_i$ and $\mu=\lim_{n \rightarrow \infty}\frac{1}{n}\sum_{i=1}^n \delta_{U_i}$.
\end{proof}

Recall that our main result, Theorem \ref{thm:couplage}, consists in coupling  both processes $X_{\mathsf{AP}}$ and $X_{\mathsf{B}}$. First in Sec.~\ref{sec:coupling} we start from the Aldous-Pitman fragmentation on a CRT and we construct an excursion-type function, which we show to be continuous and to be equal in law to a Brownian excursion, thus proving Theorem \ref{thm:couplage} (i). Then in Sec.~\ref{sec:recover} we explain how to recover the Brownian CRT with its Poissonian rain from the ``Bertoin excursion'', proving Theorem \ref{thm:couplage} (ii).

\section{Defining the ``Bertoin'' excursion from the Aldous-Pitman fragmentation}

\label{sec:coupling}

Here we start from a Brownian CRT $\cT$ equipped with a Poissonian rain $ \cP$, and using the the cut-tree $\cutT$ of $\cT$ we construct a function $F$ having the law of a Brownian excursion, in such a way that, for all $t \geq 0$, the nonincreasing rearrangement of the masses of the connected components of $\mathcal{T} \backslash \cP_{t}$ are the same as the nonincreasing rearrangement of the lengths of the excursions of $(F(s)-ts)_{ 0 \leq s \leq 1}$ above its running infimum.

The algorithm used to construct the function $F$, which we call the Pac-Man algorithm and which we define in Sec.~\ref{ssec:pacman}, consists in exploring the cut-tree $\cutT$ from its root $\rho$, associating with each element $h \in [0,1]$ a final target point $\pi_h$ in $\cutT$ in a surjective way, and a value $F(h)$. We then investigate the properties of this function $F$, showing that it is continuous (Sec.~\ref{ssec:continuity}) and that it has the law of a Brownian excursion (Sec.~\ref{ssec:exc}).

\subsection{Defining an excursion-type function from the Aldous-Pitman representation}
\label{ssec:pacman}

We keep the notation of Section \ref{ssec:cuttree}. Here we shall construct an excursion-type function $F$ from a Brownian CRT $\cT$ equipped with a Poissonian rain $ \cP$ which will turn out to meet the requirements of Theorem \ref{thm:couplage} (i). To this end, we shall use the Brownian cut-tree $\cutT$ associated with $\cT$ as defined in Section~\ref{sec:cuttree}.

With every value $h \in [0,1]$ we shall associate one point $\pi_h$ of $\cutT$ using a recursive procedure.
 It can be informally presented as follows. Imagine Pac-Man starting at the root of $\cutT$ and wanting to eat exactly an amount $h$ of mass. It has an initial target leaf $\ell$, and starts going towards this target. As soon as it encounters a point $x$ such that the subtree $\cutT_x^\ell$ containing the target leaf has mass at most $h$, Pac-Man eats this subtree; if this mass was strictly less than $h$, then it turns out that this point was necessarily a branchpoint, and Pac-Man continues his journey in the remaining subtree equiped with a new target. Pac-Man stops when it has eaten an amount $h$ of mass, and we denote by $\pi_h$ the ending point of the process.

Given a tree $T=(T, r, \nu)$ with root $r$ and mass measure $\nu$,  a distinguished leaf $\ell$ and a value $0 \leq h \leq  \nu(T)$, we set
$$B \left( T, \ell,h \right)= \inf \left\{x \in \llbracket r, \ell \rrbracket : \nu(T_{x}^{\ell}) \leq h \right\}.$$
Observe that $B ( T, \ell,0 )=\ell$, $B ( T, \ell, \nu(T))= r$ and that for $0 < h <  \nu(T)$,  if $B ( T, \ell,h )$ is a point of the skeleton of $T$, then necessarily $\nu(T_{B(T, \ell, h)}^\ell)=h$.

\bigskip

\noindent\textbf{Pac-Man algorithm.} Given $h \in [0,1]$, we define a sequence $(B_{i},L_{i},H_{i})_{0 \leq i < N+1}$ with $N \in \mathbb{Z}_{+} \cup \{+\infty\} $  as follows (we drop the dependence in $h$ to simplify notation). First, set $B_{0}= \rho$, $L_{0}= 0$, $H_{0}=h$. Then, by induction, if  $(B_{i},L_{i},H_{i})_{0 \leq i \leq k}$ have been constructed, we set:
\[B_{k+1}=B \left( \cutT_{B_{k}}^{L_{k}},  L_{k},H_{k} \right), \qquad H_{k+1}=h-\nu \left(\cutT_{B_{k+1}}^{L_{k}}\right).\]
If $H_{k+1}=0$, we set  $N=k+1$ and stop, otherwise we set $L_{k+1}= \Lambda\left(\bar{\cC}_{B_{k+1}}^{L_{k}}\right)$ and continue (see Fig.~\ref{fig:pacman} for an illustration).  In particular, observe that $h=\sum_{1 \leq k <N+1}\nu (\cutT_{B_{k}}^{L_{k-1}})$ by construction.
When $N<\infty$, we set $\pi_h=B_{N}$. When $N=\infty$, we define $\pi_h$ as the point which is the limit  $(B_{i})_{i \geq 0}$ (we will later see that when $N=\infty$ the point $\pi_h$ is necessarily a leaf). Observe that the limit exists by compactness, since the sequence  $(B_{i})_{i \geq 0}$ is increasing for the genealogical order.

  Finally, we  set
 \begin{equation}
 \label{eq:defF}F(h)=\sum_{1 \leq k <N+1} \tau_{B_{k}} \cdot \nu \left(\cutT_{B_{k}}^{L_{k-1}}\right),
 \end{equation}
 where we recall from \eqref{eq:deft} the notation $\tau_{x}$ for $x \in \cutT$. In order to unify the treatment, here and in the sequel, when $N=\infty$, the notation $\sum_{1 \leq i < N+1}$ simply means $\sum_{i=1}^{\infty}$; similarly $(\cdot)_{1 \leq i <N+1}$ means $(\cdot)_{i \geq 1}$.
  
 \begin{figure}[!ht] \centering
\includegraphics[scale=0.6]{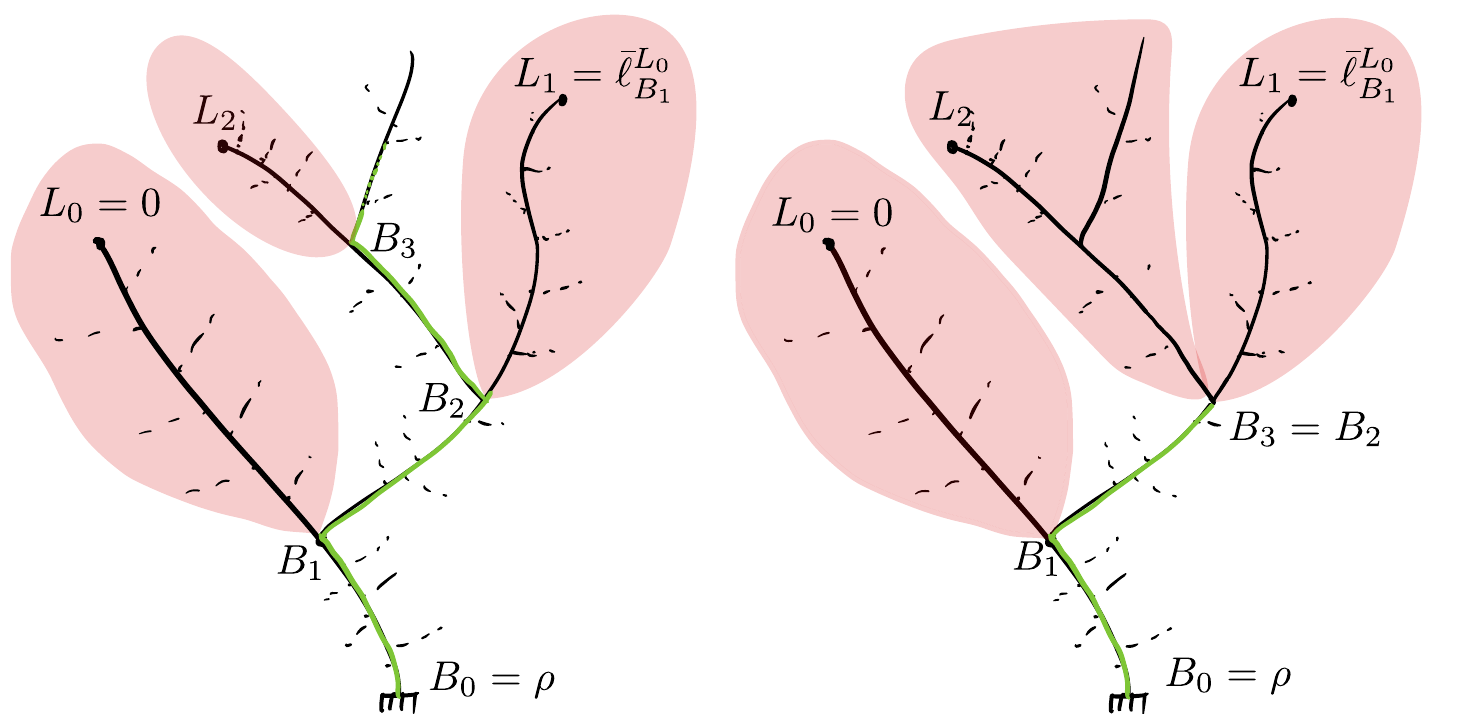}
\caption{Two illustrations of the construction: Pac-Man's trajectory in the cut-tree is represented in green and the eaten subtrees in red. For example, when reaching $B_{1}$, Pac-Man eats the subtree containing $L_{0}$ (it is the first time the mass of the subtree above is less than $h$), and continues in direction of $L_{1}$. The value $F(h)$ is obtained by summing weighted masses of the eaten subtrees.}
\label{fig:pacman}
\end{figure}

\begin{rk}
Observe that the Pac-Man algorithm is initialized with the target leaf $0$, which is the ``image'' of $\varnothing$ in the cut-tree $\mathcal{C}$. Changing the initial target leaf amounts to rerooting the tree $\cT$, and will produce another function $F$ (for which the assertion of Theorem \ref{thm:couplage} (i) still holds).
\end{rk}

It is rather straightforward to check that \eqref{eq:defF}  defines a bounded function on $[0,1]$.
  
\begin{lem}
\label{lem:bounded}
We have $
\sup_{h \in [0,1]} F(h) \leq \mathsf{Height}(\cutT) $.
\end{lem}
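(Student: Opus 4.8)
The statement to prove is
\[
\sup_{h \in [0,1]} F(h) \leq \mathsf{Height}(\cutT),
\]
where $F(h) = \sum_{1 \leq k < N+1} \tau_{B_k} \cdot \nu(\cutT_{B_k}^{L_{k-1}})$. The key structural fact I would exploit is that the subtrees $\cutT_{B_k}^{L_{k-1}}$ appearing in the sum are \emph{pairwise disjoint} and all contained in $\cutT$, so that their $\nu$-masses sum to at most $\nu(\cutT) = 1$. Indeed, at each step the Pac-Man ``eats'' the subtree $\cutT_{B_{k+1}}^{L_k}$ hanging above $B_{k+1}$ on the side of the old target $L_k$, and then continues along the complementary subtree $\bar\cC_{B_{k+1}}^{L_k}$ with new target $L_{k+1} = \Lambda(\bar\cC_{B_{k+1}}^{L_k})$; the next eaten subtree $\cutT_{B_{k+2}}^{L_{k+1}}$ lives inside $\bar\cC_{B_{k+1}}^{L_k}$, which is disjoint from the subtree eaten at the previous step. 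A short induction makes this precise. Combined with the identity $h = \sum_{1 \leq k < N+1} \nu(\cutT_{B_k}^{L_{k-1}})$ already recorded after the algorithm's definition, this shows the masses $\nu(\cutT_{B_k}^{L_{k-1}})$ form a (sub)partition of the unit mass, with total exactly $h \leq 1$.

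Next I would bound each coefficient $\tau_{B_k}$. By definition \eqref{eq:deft}, $\tau_x = \int_{\llbracket \rho, x\rrbracket} \frac{1}{\nu(\cutT_z)}\lambda(\mathrm dz)$, and $\tau$ is increasing along branches of $\cutT$. The crucial observation is that $B_k \preceq L_{k-1}$ (the point $B_k$ lies on the segment from $B_{k-1}$ towards $L_{k-1}$, hence is an ancestor of the current target), and more importantly that the genealogical positions $B_1 \prec B_2 \prec \cdots$ are increasing, and each $B_k$ is an ancestor of some leaf of $\cutT$. Therefore $\tau_{B_k} \leq \tau_\ell$ for any leaf $\ell$ of $\cutT$ with $B_k \preceq \ell$, and by monotonicity of $\tau$ one gets $\tau_{B_k} \leq \sup_{x \in \cutT} \tau_x$. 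It remains to identify $\sup_{x \in \cutT}\tau_x$ with $\mathsf{Height}(\cutT)$: since $\tau_x$ is exactly the distance $d_{\cutT}(\rho, x)$ scaled appropriately—wait, more carefully, one should note that $\tau$ is a time-change of arc length along $\cutT$ and that by the Bertoin--Miermont identification the quantity $\tau_x$ equals precisely the value playing the role of ``height'' in the cut-tree, so $\sup_x \tau_x = \mathsf{Height}(\cutT)$ by definition of $\mathsf{Height}$ as the maximal distance (in the relevant metric) from the root. Actually the cleanest route is: the cut-tree is by construction endowed with the metric $d_{\cutT}$, and the times $t_{i,j} = \tau_{i \wedge j}$ are bounded by the (finite a.s.) height; alternatively one simply bounds $\tau_{B_k} \leq \mathsf{Height}(\cutT)$ directly from its definition as an integral of arc length reweighted by $1/\nu(\cutT_z) \geq 1$... no—that inequality goes the wrong way, so one must instead use that $\tau_{B_k} = t_{i,j}$ for suitable $i,j$ and that these cut times are at most the overall height, which is the point to pin down carefully.

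Assembling the pieces: for each $k$, $\tau_{B_k} \leq \mathsf{Height}(\cutT)$, and the weights $\nu(\cutT_{B_k}^{L_{k-1}})$ are nonnegative with sum $h \leq 1$; hence
\[
F(h) = \sum_{1 \leq k < N+1} \tau_{B_k}\,\nu\!\left(\cutT_{B_k}^{L_{k-1}}\right) \leq \mathsf{Height}(\cutT) \sum_{1 \leq k < N+1} \nu\!\left(\cutT_{B_k}^{L_{k-1}}\right) = h\cdot \mathsf{Height}(\cutT) \leq \mathsf{Height}(\cutT),
\]
and taking the supremum over $h \in [0,1]$ finishes the proof. \textbf{The main obstacle} I anticipate is the second paragraph: cleanly justifying the bound $\tau_{B_k} \leq \mathsf{Height}(\cutT)$. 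The inequality $1/\nu(\cutT_z) \geq 1$ points the wrong way for a direct integral estimate, so one genuinely needs the identification $\tau_{B_k} = t_{i,j}$ for suitable indices together with the (almost sure) finiteness and the interpretation of $\mathsf{Height}(\cutT)$ as $\sup_{x}\tau_x$; making sure this ``height'' is measured in the correct (namely $\tau$-)coordinate rather than in $d_{\cutT}$ is the subtle point, though given Theorem \ref{thm:cuttreecrtiscrt} and the relation $t_{i,j}=\tau_{i\wedge j}$ it should be routine once stated carefully.
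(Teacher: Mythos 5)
Your plan has a genuine gap at its heart, and it is precisely the point you flag as the ``main obstacle'': the bound $\tau_{B_k}\leq \mathsf{Height}(\cutT)$ is false, and so is the identification $\sup_{x\in\cutT}\tau_x=\mathsf{Height}(\cutT)$. From \eqref{eq:deft} one has $\tau_x=\int_{\llbracket\rho,x\rrbracket}\nu(\cutT_z)^{-1}\lambda(\mathrm dz)\geq d_{\cutT}(\rho,x)$, and as $x$ approaches a $\nu$-generic leaf $\ell$, the mass $\nu(\cutT_z)$ decays so fast that $\tau_x\to+\infty$. This is exactly the statement that $t_{i,j}\to\infty$ as $j$ is chosen so that $i\wedge j\to i$; the cut times $t_{i,j}=\tau_{i\wedge j}$ are unbounded, not controlled by the height of $\cutT$. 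So your fallback argument (``$\tau_{B_k}=t_{i,j}$ for suitable indices, and these cut times are at most the overall height'') does not work either. Concretely, when $N=\infty$ the sequence $B_k$ converges to the leaf $\pi_h$ and, for Lebesgue-a.e.\ $h$, one has $\tau_{B_k}\to\infty$; the finiteness of $F(h)$ comes from the compensating decay of the masses $m_k=\nu(\cutT_{B_k}^{L_{k-1}})$, not from a uniform bound on $\tau_{B_k}$.

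The first part of your plan (disjointness of the eaten subtrees and $\sum_k m_k=h\leq 1$) is correct, but by itself it cannot yield the lemma since the coefficients $\tau_{B_k}$ are unbounded. The paper's proof instead interchanges two summations so that the $1/\nu$ factor inside each $\tau_{B_k}$ is cancelled by the total mass sitting above: for $z\in\llbracket B_i,B_{i+1}\rrbracket$ one has $\nu(\cutT_z)\geq\sum_{j\geq i+1}m_j$, whence $\tau_{B_k}\leq\sum_{i=0}^{k-1}d(B_i,B_{i+1})/\sum_{j\geq i+1}m_j$, and then
\[
F(h)\;\leq\;\sum_{k}m_k\sum_{i<k}\frac{d(B_i,B_{i+1})}{\sum_{j\geq i+1}m_j}
\;=\;\sum_{i}\frac{d(B_i,B_{i+1})}{\sum_{j\geq i+1}m_j}\sum_{k\geq i+1}m_k
\;=\;\sum_{i}d(B_i,B_{i+1})\;\leq\;\mathsf{Height}(\cutT),
\]
the last step because the $B_i$ are nested along a branch from $\rho$. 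This Abel-type rearrangement is the idea your proposal is missing: the correct mechanism is cancellation between the divergence of $\tau_{B_k}$ and the decay of the remaining masses, not a uniform bound on $\tau$.
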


\begin{proof}
Take $h \in [0,1]$ and keep the previous notation.
To simplify notation, set $m_{k}= \nu (\cutT_{B_{k}} ^{L_{k-1}})$ for $1 \leq k<N+1$. Since $B_{i}$ is an ancestor of $B_{j}$ for $i<j$, for every $0 \leq i <N+1$ we have
$$\int_{\llbracket B_{i},B_{i+1} \rrbracket }  \frac{1}{\nu(\cutT_{z})} \lambda(\mathrm{d} z) \leq  \frac{d(B_{i},B_{i+1})}{ \sum_{i+1 \leq j <N+1} m_{j}}.$$
It readily follows that for every $1 \leq k <N+1$:
$$\tau_{B_k} = \int_{\llbracket \rho,  B_{k} \rrbracket }  \frac{1}{\nu(\cutT_{z} )} \lambda(\mathrm{d} z) \leq \sum _{i=0}^{k-1} \frac{d(B_{i},B_{i+1})}{ \sum_{i+1 \leq j <N+1} m_{j}}.$$
Thus
$$F(h) \leq  \sum_{1 \leq k<N+1} m_k \sum _{i=0}^{k-1} \frac{d(B_{i},B_{i+1})}{ \sum_{i+1 \leq j <N+1}m_{j}}= \sum_{0 \leq i< N+1 } \sum_{i+1 \leq k<N+1} \frac{m_{k}}{\sum_{i+1 \leq j <N+1} m_{j}} d(B_{i},B_{i+1})=\sum_{0 \leq i<N+1} d(B_{i},B_{i+1}),$$
and the desired conclusion follows.
\end{proof}

\begin{rk}
\label{rem:labels}
Given the rescaled convergence of discrete cut-trees to continuous cut-trees \cite{BM13}, it is natural to expect that when one equips the discrete cut-trees with labels corresponding to cutting times, a joint convergence holds towards $\cutT$ equipped with the labelling $(\tau_{x})_{x \in \cutT}$. It is interesting to notice that in the discrete case, given the cut-tree, the labelling is random (see \cite[Sec.~3]{MW19}), while in the continuous case, given $ \cutT$, the labelling $(\tau_{x})_{x \in \cutT}$ is deterministic.
\end{rk}

\subsection{Continuity of the function $F$}
\label{ssec:continuity}

We now prove that the function $F$ constructed this way is a.s. continuous. To this end, we rely on the fact that $\cutT$ is distributed as a Brownian CRT, comparing as in Lemma \ref{lem:bounded} values of $F$ with distances in $\cutT$.

\begin{prop}
\label{prop:continuity}
Almost surely, $F$ is continuous on $[0,1]$.
\end{prop}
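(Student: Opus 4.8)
The plan is to show continuity by establishing a quantitative modulus: for $h < h'$ in $[0,1]$, I want to bound $|F(h')-F(h)|$ by a quantity that goes to $0$ as $h'-h \to 0$, using the comparison of $F$-values with distances in $\cutT$ exactly as in the proof of Lemma \ref{lem:bounded}. First I would understand how the Pac-Man trajectory for $h'$ relates to the one for $h$. The key structural observation is that increasing the target mass from $h$ to $h'$ means Pac-Man eats \emph{more}: the point $\pi_h$ should be an ancestor of (or on the path towards) the target relevant to $\pi_{h'}$, so the trajectory $(B_i)$ for $h$ is an initial segment — up to the last eaten subtree — of the trajectory for $h'$. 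More precisely, as long as the subtrees eaten have total mass $\le h$, the two runs of the algorithm agree; they first differ at the step where, for threshold $h'$, Pac-Man keeps going (does not stop or eats a larger subtree) whereas for threshold $h$ it would have stopped or eaten a smaller subtree. So $\pi_h \preceq \pi_{h'}$ in the genealogical order (with the same common prefix of branchpoints and eaten subtrees), and $F(h') - F(h)$ is expressible in terms of the "extra" eaten subtrees hanging between $\pi_h$ and $\pi_{h'}$, together with a correction on the single subtree where the two runs branch.

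Next, with this in hand, I would bound $|F(h')-F(h)|$ by a sum of $\tau$-weighted masses of these extra subtrees. Since $\tau$ is increasing along branches and bounded by $\mathsf{Height}(\cutT) < \infty$ (the cut-tree is a.s. compact, being distributed as a Brownian CRT by Theorem \ref{thm:cuttreecrtiscrt}), the crude bound $|F(h')-F(h)| \le \mathsf{Height}(\cutT) \cdot (\text{total extra mass})$ already gives something — but the total extra mass is $h'-h$ plus possibly the mass of one partially-re-eaten subtree, which need not be small. So the crude bound is not enough by itself; I would instead run the Lemma \ref{lem:bounded} telescoping argument locally, on the portion of $\cutT$ between $\pi_h$ and $\pi_{h'}$, to get $|F(h')-F(h)| \lesssim d_{\cutT}(\pi_h, \pi_{h'}) + (\text{boundary term})$, where the boundary term involves the single branching subtree and its $\tau$-value. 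Then continuity of $h \mapsto F(h)$ follows from continuity of $h \mapsto \pi_h$ in $\cutT$ together with control of that boundary term. For this, I would show $\pi_h$ depends continuously (in $d_{\cutT}$) on $h$: since $\pi_h$ moves monotonically along a growing path as $h$ increases, one checks there are no jumps, i.e. $d_{\cutT}(\pi_h, \pi_{h'}) \to 0$ as $h' \downarrow h$ and as $h' \uparrow h$ — a jump would correspond to an atom of $\nu$ along a branch of $\cutT$, which does not occur since $\cutT$ is a Brownian CRT (no atoms of the mass measure, every subtree above a branchpoint has positive $\nu$-mass and masses vary continuously along branches).

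The main obstacle I expect is the boundary term: when the threshold crosses a value where Pac-Man's behaviour changes qualitatively (stopping versus eating a subtree versus eating a strictly smaller subtree and continuing with a new target), the two trajectories can diverge into genuinely different subtrees of $\cutT$, and one must show the contribution of this divergence to $F$ is continuous in $h$. Handling the case $N = \infty$ (infinitely many eaten subtrees, $\pi_h$ a leaf) simultaneously with $N < \infty$ requires care: one needs the tail $\sum_{k \ge K} \tau_{B_k} \nu(\cutT_{B_k}^{L_{k-1}})$ to be small, which again follows from $\tau$ being bounded and $\sum_k \nu(\cutT_{B_k}^{L_{k-1}}) = h < \infty$, but the uniformity in $h$ near the limit point needs to be argued. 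I would organize the proof by first treating $h$ with $N(h) < \infty$ and showing left- and right-continuity there via the telescoping bound, then treating the (a.s.\ measure-zero, but topologically relevant) set of $h$ with $N(h) = \infty$ by an approximation argument, using that $F$ is monotone-like enough on the relevant pieces that a uniform tail estimate transfers.
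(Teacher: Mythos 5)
Your overall strategy --- compare $F$-increments to distances in $\mathcal{C}$ via a Lemma~\ref{lem:bounded}--style telescoping, and prove continuity from that --- is the right general idea and matches the paper in spirit. But two of the structural claims underpinning your plan are wrong, and the gap is not cosmetic.

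First, it is false that $\pi_h \preceq \pi_{h'}$ for $h<h'$. Consider a branchpoint $b$ with $h_0(b)<h<h_1(b)<h'<h_2(b)$: by Lemma~\ref{lem:target}, $\pi_h$ lies in $\mathcal{C}^{\ell(b)}_b$ while $\pi_{h'}$ lies in $\mathcal{C}^{\bar{\ell}(b)}_b$; these are disjoint subtrees of $b$, so neither final target point is an ancestor of the other. Your telescoping bound ``locally, on the portion of $\cutT$ between $\pi_h$ and $\pi_{h'}$'' presupposes that this portion is a branch of $\mathcal{C}$, which it is not. This is exactly why the paper's quantitative estimate, Lemma~\ref{lem:F}, takes the form $|F(h)-F(h')| \le \tau_x|h-h'| + d_\mathcal{C}(x,y)$ only under the hypothesis $x\prec y$, and why the continuity proof does not compare $F(h)$ and $F(h')$ directly: it introduces a \emph{common-ancestor} reference point $u$ (or $x$) in the skeleton and compares both $F(h)$ and $F(h')$ to $F(h_1(u))$. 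Your proposal does not contain this pivot-to-a-reference-value idea, and without it the argument stalls.

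Second, your claim that $h\mapsto\pi_h$ has no jumps is also false, and for a reason you have mislocated. At $h=h_1(b)$ for a branchpoint $b$: as $h\uparrow h_1(b)$, $\pi_h\to b$, but for $h$ slightly larger than $h_1(b)$ Pac-Man, having eaten all of $\mathcal{C}^{\ell(b)}_b$, continues with target $\bar{\ell}(b)$ and a very small remaining mass budget, so $\pi_h$ lands \emph{close to the leaf $\bar{\ell}(b)$}, not close to $b$. Thus $\pi_h$ jumps by roughly $d_\mathcal{C}(b,\bar{\ell}(b))$. This has nothing to do with atoms of $\nu$: it is an intrinsic feature of the Pac-Man dynamics. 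Continuity of $F$ across this jump is \emph{not} because $d_\mathcal{C}(\pi_h,\pi_{h'})$ is small; it is because the extra eaten subtree's contribution $\tau_u\,\nu(\mathcal{C}_u)$ to $F$ vanishes as $u$ approaches a leaf --- this is precisely the content of Lemma~\ref{lem:leaf}, which is the real engine of the continuity proof at such points, and which does not appear in your plan. (Relatedly, your remark that ``$\tau$ is bounded by $\mathsf{Height}(\mathcal{C})$'' is not correct either: $\tau_x\to\infty$ as $x$ approaches a leaf; Lemma~\ref{lem:bounded} bounds $F$, not $\tau$.) So the proposal as written would not close: you need (a) the common-ancestor pivot to restore comparability, (b) the factorization $\tau_x|h-h'|+d_\mathcal{C}(x,y)$ with the multiplicative $\tau_x$ kept explicit rather than bounded away, and (c) the vanishing of $\tau_x\cdot\nu(\mathcal{C}_x)$ near leaves, organized by the case analysis on whether $\pi_h$ is a leaf, a skeleton point, or a branchpoint.
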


In order to establish the continuity of the function $F$, it is useful to define a ``backward'' construction. Fix $x \in \cutT$. We define a sequence $(B_{i},L_{i})_{0 \leq i < N+1}$ with $N \in \mathbb{Z}_{+} \cup \{+\infty\} $  as follows (we drop the dependence in $x$ to simplify notation). Set $B_{0}=\rho, L_{0}=0$. Then, by induction, if $(B_{i},L_{i})_{0 \leq i \leq k}$ have been constructed, we define $B_{k+1}$ by:
$$\llbracket B_{k}, x \rrbracket \cap \llbracket B_{k},L_{k} \rrbracket= \llbracket B_{k},B_{k+1}\rrbracket, \qquad  L_{k+1}= \begin{cases}  \Lambda \left( \bar{\cC}_{B_{k+1}}^{L_{k}} \right)  &\textrm{ if } B_{k+1} \textrm{ is a branchpoint} \\
B_{k+1} &\textrm{ otherwise}
\end{cases}$$
If $x=B_{k+1}$ we set $N=k+1$ and stop, otherwise we continue.
We say that $(B_{i},L_{i})_{0 \leq i < N+1}$  is the \emph{record sequence} associated with $x$ (see Fig.~\ref{fig:backward} for an illustration).

 \begin{figure}[!ht] \centering
\includegraphics[scale=0.6]{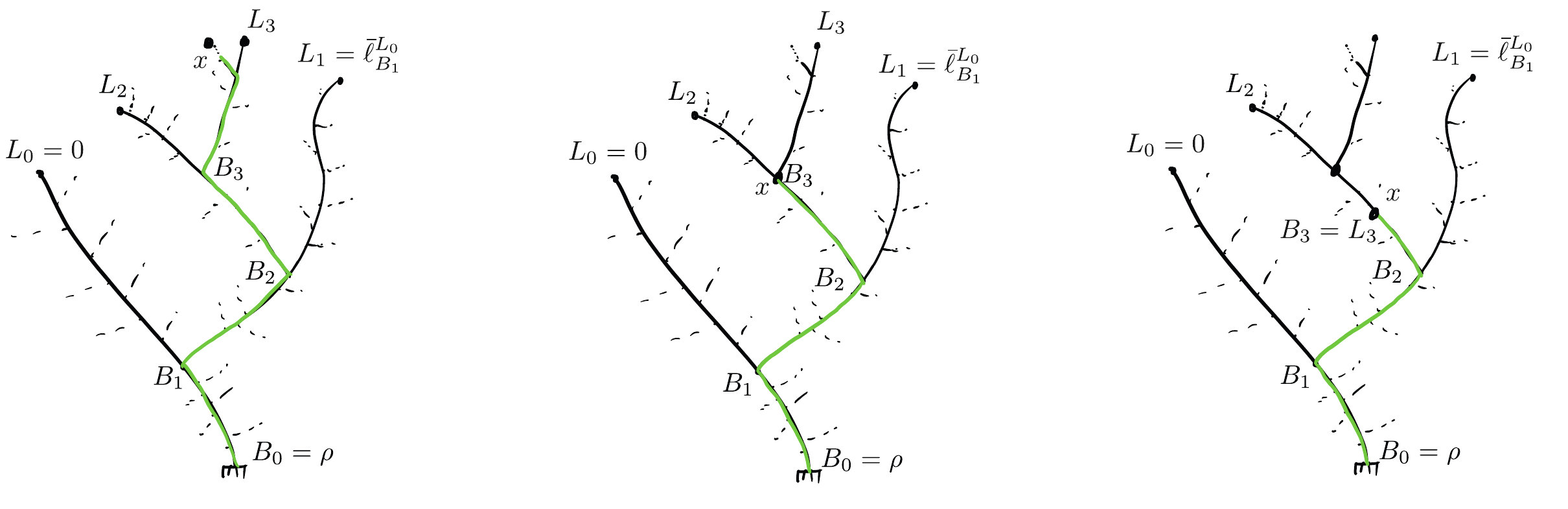}
\caption{Illustration of the backward construction: on the left, $x$ is a leaf and the record sequence associated with $x$ is infinite; in the middle, $x$ is a branchpoint and $(B_{i},L_{i})_{0 \leq i \leq 3}$ is the associated record sequence; on the right, $x$ belongs to the skeleton and $(B_{i},L_{i})_{0 \leq i \leq 3}$ is the associated record sequence.}
\label{fig:backward}
\end{figure}

The next result characterizes points of $\cutT$ whose record sequence is infinite.

\begin{lem}
\label{lem:cvx}
When $N=\infty$, $x$ is a leaf, and $(B_{i})_{i \geq 0}$ converges to $x$.
\end{lem}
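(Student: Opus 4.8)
\textbf{Proof plan for Lemma \ref{lem:cvx}.} Suppose $N=\infty$, so that the record sequence $(B_i,L_i)_{i\ge 0}$ associated with $x$ is infinite. By construction the $B_i$ are strictly increasing for the genealogical order along $\llbracket \rho, x\rrbracket$ (indeed $B_{k+1}$ is defined as the far endpoint of the overlap $\llbracket B_k,x\rrbracket\cap\llbracket B_k,L_k\rrbracket$, which is a nondegenerate initial segment of $\llbracket B_k,x\rrbracket$ because $B_{k+1}\ne x$ by assumption), so they converge in the compact tree $\cutT$ to some point $B_\infty\preceq x$; I must show $B_\infty=x$ and that $x$ is a leaf. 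The first observation is that by the very definition of the record sequence each $B_{k+1}$ is the branchpoint $\Lambda(\bar\cC_{B_{k+1}}^{L_k})\wedge x$-type point where the path toward $L_k$ separates from the path toward $x$; more precisely, either $B_{k+1}$ is a branchpoint (and then $L_{k+1}$ is a leaf of the subtree $\bar\cC_{B_{k+1}}^{L_k}$ hanging off on the side away from $x$), or $B_{k+1}\in\Sk(\cutT)$ and $L_{k+1}=B_{k+1}$, in which case at the next step $\llbracket B_{k+1},L_{k+1}\rrbracket$ is the trivial path and $B_{k+2}=B_{k+1}$, contradicting strict monotonicity. Hence when $N=\infty$ every $B_k$ for $k\ge 1$ is in fact a branchpoint, and $L_k$ is a genuine leaf lying strictly on the complement side, so that $\cutT_{B_\infty}^{x}$ does not contain any $L_k$.

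The key quantitative step is to bound $\nu(\cutT_{B_\infty}^{x})$ from above and show it is $0$, which forces $B_\infty=x$ and $x$ a leaf since $\nu$ has full support on the leaves of the CRT $\cutT$ (Theorem \ref{thm:cuttreecrtiscrt}). For this I would compare with the Pac-Man construction, or argue directly: at step $k$ the path from $B_k$ to $L_k$ leaves on one side the subtree $\bar\cC_{B_{k+1}}^{L_k}$ — no, rather, I want to isolate the mass thrown away toward $x$. Consider, at each step $k\ge 1$, the subtree $S_k:=\bar\cutT_{B_k}^{L_{k-1}}$ above $B_k$ not containing $L_{k-1}$; this is precisely the subtree containing both $x$ and all later $B_j$, and $S_{k+1}\subsetneq S_k$ with $\nu(S_k)-\nu(S_{k+1})=\nu\big(\cutT_{B_{k+1}}^{L_k}\big)\ge 0$ (the piece split off toward $L_k$ at step $k+1$). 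Thus $\nu(S_k)$ decreases to a limit $\ell\ge 0$ and $\cap_k S_k = \cutT_{B_\infty}^{x}$ (up to the endpoint), so $\nu(\cutT_{B_\infty}^{x})=\ell$. The content of the lemma is that $\ell=0$: I would show $\nu\big(\cutT_{B_{k+1}}^{L_k}\big)$ does not go to $0$ too fast relative to how far $B_{k+1}$ still is from $x$, OR — cleaner — invoke that, since $\cutT$ is a CRT, a decreasing sequence of subtrees $S_k$ with $\nu(S_k)\to\ell>0$ and $\tau_{B_k}$ bounded would have to stabilize, contradicting $N=\infty$ and strict monotonicity. Concretely, one shows $\tau_{B_\infty}<\infty$ (as in the proof of Lemma \ref{lem:bounded}, using $\int_{\llbracket B_i,B_{i+1}\rrbracket}\tfrac{1}{\nu(\cutT_z)}\lambda(\mathrm dz)\le \tfrac{d(B_i,B_{i+1})}{\nu(S_{i+1})}\le\tfrac{d(B_i,B_{i+1})}{\ell}$ if $\ell>0$, and $\sum_i d(B_i,B_{i+1})\le d(\rho,x)<\infty$), and then notes that a point $z$ of a CRT with $\nu(\cutT_z)>0$ and $z$ not a leaf has $\tau$ strictly increasing on a nontrivial interval above $z$, so if $\nu(\cutT_{B_\infty}^{x})=\ell>0$ then $B_\infty$ cannot equal $x$ and the construction would produce a next, strictly larger $B_{\infty+1}$ — contradicting that $B_\infty$ is the limit. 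This rules out $\ell>0$.

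Therefore $\nu(\cutT_{B_\infty}^{x})=0$. Since $\nu$ is supported on the leaves of $\cutT$ and assigns positive mass to every subtree $\cutT_z^{w}$ with $z\prec w$ and $z$ not a leaf (equivalently, $\nu(\cutT_z)>0$ for every non-leaf $z$, which holds for the Brownian CRT), it follows that $B_\infty$ is a leaf. As $B_\infty\preceq x$ and the $B_i$ are strictly below $x$ (none equals $x$), and $B_\infty$ being a leaf means nothing lies strictly above it, we get $B_\infty=x$, so $x$ is a leaf and $(B_i)_{i\ge 0}\to x$.

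\textbf{Main obstacle.} The delicate point is the dichotomy $\ell=0$ versus $\ell>0$: proving rigorously that a decreasing sequence of subtrees cut off successively along a branch of the CRT cannot have residual mass $\ell>0$ while the process runs forever. The honest way is the $\tau$-finiteness estimate above combined with the fact that in a Brownian CRT $\tau_z=\int_{\llbracket\rho,z\rrbracket}\nu(\cutT_w)^{-1}\lambda(\mathrm dw)$ is finite precisely up to leaves and that $\tau$ is strictly increasing and continuous along branches; one must be careful that $B_\infty$ could a priori be a branchpoint with $\nu(\cutT_{B_\infty}^{x})=0$ but $\nu(\cutT_{B_\infty})>0$, which is why one genuinely needs to track the mass on the $x$-side, $\nu(\cutT_{B_\infty}^{x})$, rather than $\nu(\cutT_{B_\infty})$. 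Handling that side-specific mass carefully, and justifying $\cap_k S_k=\cutT_{B_\infty}^{x}$ up to a $\nu$-null set of endpoints, is where the argument needs the most care.
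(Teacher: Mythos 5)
Your proposal takes a genuinely different route from the paper's, attempting to argue entirely inside the cut-tree $\cC$ using the measure $\nu$ and the times $\tau$; the paper instead goes back to the original CRT $\cT$ and its Poissonian rain. But your route has a real gap at exactly the point you flag as the ``main obstacle'': the dichotomy $\ell = 0$ versus $\ell > 0$ is not established, and neither of your two sketched arguments closes it. The claim that ``a decreasing sequence of subtrees $S_k$ with $\nu(S_k)\to\ell>0$ and $\tau_{B_k}$ bounded would have to stabilize'' is simply false for a Brownian CRT: take any sequence of points $z_k$ approaching an interior point $z$ along a branch with $\nu(\cutT_z)>0$ and you have a non-stabilizing decreasing sequence of subtrees with positive limiting mass and bounded $\tau$. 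Your alternative argument, that ``the construction would produce a next, strictly larger $B_{\infty+1}$, contradicting that $B_\infty$ is the limit,'' also does not make sense: the record sequence $(B_k)_{k\ge 0}$ is already the full infinite sequence, there is no transfinite step after the limit, and nothing in the definition lets you continue it past $B_\infty$. Finally, you do not handle the case $B_\infty = x$ with $x$ a non-leaf, where $\cutT_{B_\infty}^x$ is degenerate and the whole $\ell$-bookkeeping breaks down.

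The structural reason a purely $\cC$-internal argument is hard here is that the targets $L_k = \Lambda(\bar{\cC}_{B_k}^{L_{k-1}})$ are not arbitrary leaves: they are the images of cutpoints, and it is precisely the Poissonian origin of the cut-tree that keeps the record sequence finite. The paper's proof exploits this directly and is much shorter. It first reduces to the case $x \in \cB(\cC)$ (if $x\in\Sk(\cC)$, pass to a descendant branchpoint). Writing $x = i\wedge j$, the $B_k$'s correspond bijectively to a subsequence of the cutpoints that fall on $\llbracket\varnothing,U_i\rrbracket\cup\llbracket\varnothing,U_j\rrbracket$ before the finite time $t_{i,j}$; since this is a Poisson process of finite intensity on a compact segment up to a finite time, there are a.s.~finitely many such cutpoints, hence $N<\infty$. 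The convergence statement then follows by a short contradiction: if $N=\infty$, $x$ a leaf, and $B_k\to u\prec x$ strictly, pick a branchpoint $b$ with $u\prec b\prec x$; its record sequence would also be infinite, contradicting the first part. If you want to stay in $\cC$, you would need to import the conditional law of the half-routings (Remark~\ref{rk:rem1}) and argue probabilistically about how much mass is shaved off at each step, which is substantially more work than the cutpoint argument.
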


\begin{proof}
First, let us show that if $x$ is not a leaf, then $N<\infty$. It suffices to show that $N<\infty$ when $x$ is a branchpoint (this will entail that $N<\infty$ when $x \in \Sk(\cC)$ by considering a descendent of $x$ which is a branchpoint). To this end, recall from Sec.~\ref{ssec:cuttree} that $x$ can be written as $i \wedge j$ for some $i, j \geq 1$.
Let $c \in \llbracket U_{i},U_{j} \rrbracket$ be the cutpoint appearing at time $t_{i,j}$ (observe that $c$ is the first cutpoint falling on  $\llbracket U_{i},U_{j} \rrbracket$).
Let $(c_{p})_{1 \leq p \leq M}$ be the cutpoints falling on $\llbracket \varnothing, U_{i} \rrbracket \cup\llbracket \varnothing, U_{j} \rrbracket $ before time $t_{i,j}$, ordered by their time of appearance (observe that almost surely $M<\infty$, and that all these points fall on $\llbracket \varnothing, U_i \wedge U_j \rrbracket$ except for $c$).
 We construct a subsequence $(c_{p_{k}})_{1 \leq k \leq N}$ of cutpoints precisely corresponding to $(B_{k})_{ 1  \leq i \leq N}$ in $\cC$ as follows. Set $p_{1}=1$. If $c=c_{1}$, we set $N=1$. Otherwise, assuming that $(p_{k})_{1 \leq k \leq n}$ has been defined, set $p_{n+1}= \min \{p>p_{n} : c_{p_{n}} \prec c_{p}\}$; if $c_{p_{n+1}}=c$ we set $N=p_{n+1}$ and stop, otherwise we continue. It is clear that, for all $1 \leq k \leq N$, $c_{p_{k}}=B_k$. Furthermore, since $M<\infty$, this procedure eventually stops, thus showing that $N<\infty$.  

Next, argue by contradiction and assume that $N=\infty$, $x$ is a leaf and $(B_{k})_{k \geq 0}$ converges to a point $u \in \cC$ with $u \neq x$ (observe that $(B_{k})_{k \geq 0}$ always converges as it is increasing). Since $B_{k} \prec x$ for every $k \geq 0$, we have $u \prec x$. Choose a branchpoint $b \in \cC$ such that $u \prec b \prec x$. Then the record sequence associated with $b$ has infinitely many terms, in contradiction with the previous paragraph. 
\end{proof}

When $x$ is a branchpoint (then $N<\infty$),  we set:
\begin{equation}
\label{eq:ellellebar}\ell(x)=L_{N-1}, \qquad \bar{\ell}(x)=L_{N}.
\end{equation}
In terms of the Pac-Man construction, these leaves can be interpreted as follows: when passing at $x$ during its journey, if possible, the Pac-Man eats the subtree above $x$ containing $\ell(x)$ and continues towards $\bar{\ell}(x)$; otherwise it continues towards $\ell(x)$.

We also define for every $x \in \cutT$:
 \begin{equation}
 \label{eq:defh1}h_{1}(x)=\sum_{1 \leq k <N+1} \nu \left(\cutT_{B_{k}}^{L_{k-1}}\right).
 \end{equation}
Then, by Lemma \ref{lem:cvx}, for every $x \in \cC$, if one takes $h=h_{1}(x)$ in the Pac-Man construction,  one precisely gets the sequence $(B_{k},L_{k})_{0 \leq k <N+1}$ with $\pi_{h_{1}}=x$.

The following result is an immediate consequence of the definition:

\begin{lem}
\label{lem:decreasing}
For every branchpoint $B$, $h_{1}$ is decreasing on $\llbracket B, \ell(B) \rrbracket$.
\end{lem}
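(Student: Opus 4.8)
\textbf{Proof plan for Lemma \ref{lem:decreasing}.}

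The plan is to trace through the backward (record sequence) construction along the segment $\llbracket B, \ell(B)\rrbracket$ and show that $h_1$ strictly decreases as we move away from $B$ towards $\ell(B)$. Fix a branchpoint $B$, and let $(B_i, L_i)_{0 \le i < N+1}$ be the record sequence associated with $B$, so that $\ell(B) = L_{N-1}$ by \eqref{eq:ellellebar} and, by the definition of the record sequence, $B_N = B$. The key observation is that for a point $y$ lying strictly inside $\llbracket B, \ell(B)\rrbracket = \llbracket B_N, L_{N-1}\rrbracket$, the record sequence of $y$ coincides with that of $B$ up to index $N-1$ (i.e.\ shares $(B_0,L_0),\dots,(B_{N-1},L_{N-1})$), and then its $N$-th record point is $y$ itself, since $\llbracket B_{N-1}, y\rrbracket \cap \llbracket B_{N-1}, L_{N-1}\rrbracket = \llbracket B_{N-1}, y\rrbracket$ (as $y \preceq L_{N-1}$ lies on the branch towards $L_{N-1}$ from $B_{N-1}$, passing through $B_N=B$). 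Wait — one must be careful: $B_N = B$ lies between $B_{N-1}$ and $L_{N-1}$, so for $y \in \llbracket B_{N-1}, L_{N-1}\rrbracket$ the record construction indeed produces $B_1,\dots,B_{N-1}$ and then $y$. Consequently, from \eqref{eq:defh1},
\[
h_1(y) = \sum_{1 \le k \le N-1} \nu\bigl(\cutT_{B_k}^{L_{k-1}}\bigr) + \nu\bigl(\cutT_{y}^{L_{N-1}}\bigr),
\]
where the first sum is a constant not depending on $y \in \llbracket B, \ell(B)\rrbracket$.

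It then remains to observe that $y \mapsto \nu\bigl(\cutT_{y}^{L_{N-1}}\bigr)$ is decreasing along $\llbracket B, \ell(B)\rrbracket = \llbracket B_N, L_{N-1}\rrbracket$, directed away from $B$ towards $\ell(B) = L_{N-1}$: if $y \preceq y'$ are both on this segment then $\cutT_{y'}^{L_{N-1}} \subseteq \cutT_{y}^{L_{N-1}}$, hence $\nu\bigl(\cutT_{y'}^{L_{N-1}}\bigr) \le \nu\bigl(\cutT_{y}^{L_{N-1}}\bigr)$. This gives monotonicity of $h_1$ on the segment; the endpoint values are recovered by taking $y = B$ (yielding $\cutT_B^{L_{N-1}} = \cutT_{B_N}^{L_{N-1}}$ and the full sum $\sum_{1\le k\le N}$) and letting $y \to L_{N-1}$ (the tail term tends to $\nu(\{L_{N-1}\}) = 0$ since $L_{N-1}$ is a leaf — here one uses that $\ell(B)$ is indeed a leaf, as noted after \eqref{eq:ellellebar} and consistent with Lemma \ref{lem:cvx}).

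The only delicate point — and the one I expect to be the main obstacle — is the bookkeeping needed to justify that the record sequence of an interior point $y \in \llbracket B, \ell(B)\rrbracket$ genuinely agrees with that of $B$ up to step $N-1$ and then terminates at $y$. This requires checking that at each step $k < N$ the intersection $\llbracket B_k, y\rrbracket \cap \llbracket B_k, L_k\rrbracket$ equals $\llbracket B_k, B_{k+1}\rrbracket$ exactly as for $B$; this holds because $y$ lies on the branch $\llbracket B_{N-1}, L_{N-1}\rrbracket$ (which contains $B = B_N$), and for $k \le N-1$ one has $B_{k+1} \preceq B_N \preceq \cdots$, so $y$ sits ``past'' $B_{k+1}$ in the direction of $L_k$. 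One should also record that the claim is about $h_1$ being decreasing, not necessarily strictly; if strict monotonicity were wanted one would additionally invoke that $\nu$ has full support on the subtrees $\cutT_y^{L_{N-1}}$, but the statement as written (and its use in Lemma \ref{lem:decreasing}'s downstream applications) only needs the weak inequality, which follows immediately from the containment of subtrees. Once these inclusions are in place, the lemma follows directly from the displayed formula for $h_1(y)$.
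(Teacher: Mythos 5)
Your proposal is correct and takes the same route as the paper, which declares Lemma~\ref{lem:decreasing} an immediate consequence of the definition and gives no proof: you simply unpack the record-sequence formula for $h_{1}(y)$ with $y\in\llbracket B,\ell(B)\rrbracket$, observe that the first $N-1$ terms are constant while the last term $\nu\bigl(\cutT_{y}^{L_{N-1}}\bigr)$ is monotone in $y$ by subtree containment, and note the endpoint identifications. (The only detail worth keeping in mind is that the paper later uses the \emph{strict} inequality $h_{1}(v)<h<h_{1}(u)$ in the proof of Proposition~\ref{prop:continuity}; strictness indeed holds here because branchpoints are a.s.\ dense along any skeleton interval of the Brownian cut-tree, so the grafted subtrees between $y$ and $y'$ carry positive $\nu$-mass.)
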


 When $x$ is a branchpoint, we also set
 \begin{equation}
 \label{eq:defh2}h_{2}(x)=\sum_{1 \leq k  \leq N+1} \nu \left(\cutT_{B_{k}}^{L_{k-1}}\right);
 \end{equation}
observe that if one takes $h=h_{2}(x)$ in the Pac-Man construction, we also get $\pi_{h_2}=x$ (see Fig.~\ref{fig:h1h2} for an illustration). Also note that $h_{2}(x)=h_{1}(x)+\nu( \mathcal{C}^{\bar{\ell}(x)}_{x})$.

 \begin{figure}[!ht] \centering
\includegraphics[scale=0.6]{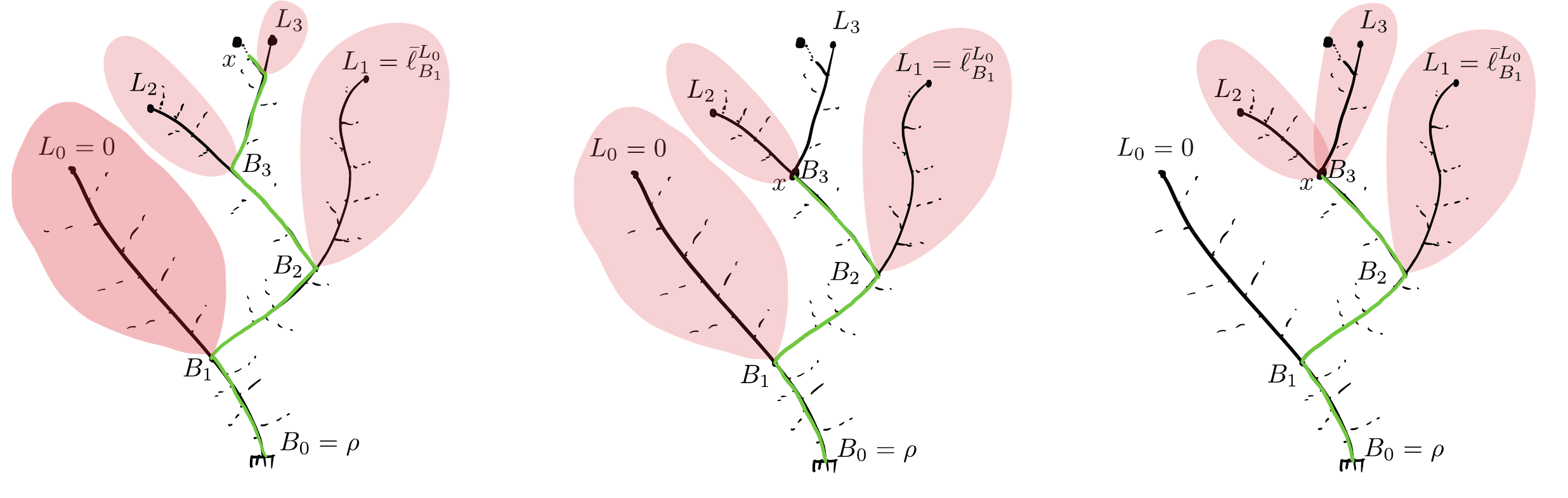}
\caption{Illustration of the definitions of $h_{1}(x)$ and $h_{2}(x)$: on the left, $x$ is a leaf and $h_{1}(x)$ is the sum of the masses of the red subtrees (there are infinitely many of them); in the middle, $x$ is a branchpoint and $h_{1}(x)$ is the sum of the masses of the three red subtrees; on the right, $x$ is the same branchpoint as in the middle and $h_{2}(x)$ is the sum of the masses of the four red subtrees.}
\label{fig:h1h2}
\end{figure}

Finally, we define
\begin{equation}
\label{eq:h2}\mathbb{h}_{2}= \{h_{2}(x): x \in \cB\left(\cC\right) \},
\end{equation}
where we recall that $\cB(\cC)$ denotes the set of all branchpoints of $\cC$, and, to simplify notation, when $x \in \cC$ is not a leaf, we set
\begin{equation}
\label{eq:defh0}
h_{0}(x)=
\begin{cases}
h_{1}(x)-\nu( \mathcal{C}_{x}) & \textrm{ if } x \in \Sk(\cC)\\
h_{1}(x)-\nu\big( \mathcal{C}^{\ell(x)}_{x}\big) & \textrm{ if } x  \in \cB\left(\cC\right).
\end{cases}
\end{equation}

The following result is also an immediate consequence of the definition of the Pac-Man construction, which we record for future use. 

\begin{lem}
\label{lem:target}
Fix $x \in \mathcal{C}$. 
\begin{enumerate}
\item[--] If $x \in \Sk(\cC)$, then for every $h \in (h_{0}(x), h_{1}(x)]$, the final target point $\pi_h$ belongs to $ \mathcal{C}_{x}$. Conversely, each element of $\mathcal{C}_{x}$ is the final target point of an element $h \in (h_{0}(x), h_{1}(x)]$.
\item[--] If $x \in \cB\left(\cC\right)$, then for every $h \in (h_{0}(x), h_{1}(x)]$, the final target point $\pi_h$ belongs to $ \mathcal{C}^{\ell(x)}_{x}$ and for every $h \in [h_{1}(x), h_{2}(x)]$, $\pi_h$ belongs to $\mathcal{C}^{\bar{\ell}(x)}_{x}$. Conversely, every point of $\mathcal{C}^{\ell(x)}_{x}$ is the final target point of some $h \in (h_{0}(x), h_{1}(x)]$, and every point of $\mathcal{C}^{\bar{\ell}(x)}_{x}$ is the final target point of some $h \in [h_{1}(x), h_{2}(x)]$.
\end{enumerate}
\end{lem}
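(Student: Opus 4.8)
Here is the plan.

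\medskip

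The proof of Lemma~\ref{lem:target} is essentially a direct unwinding of the Pac-Man recursion together with the record-sequence bookkeeping, so the strategy is to trace carefully which subtrees of $\cutT$ are "eaten" as $h$ ranges over the relevant interval, and to use the fact that $h \mapsto \pi_h$ has already been described implicitly via $h_1$ and $h_2$. First I would record the basic monotonicity facts: along any branch $\llbracket \rho, y \rrbracket$, the quantity $\nu(\cutT_z^{y})$ is decreasing in $z$ (as $z$ moves away from $\rho$), so by construction $h_1$ and $h_2$ behave monotonically on the branches traversed by the record sequence, in the spirit of Lemma~\ref{lem:decreasing}. The key structural observation is that, for a given $h$, the point $\pi_h$ is determined by the nested sequence of $B_k$'s, and $\pi_h = x$ precisely when $h = h_1(x)$ (for any $x\in\cC$, by Lemma~\ref{lem:cvx} applied to the record sequence of $x$) or $h = h_2(x)$ (when $x$ is a branchpoint); more generally, $\pi_h \in \cutT_x^{\ell}$ for the appropriate target leaf $\ell$ exactly when the algorithm, having been forced to pass through $x$, still has residual mass to eat inside that subtree.

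\medskip

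The two cases are then handled as follows. Suppose $x \in \Sk(\cC)$. I would argue that for $h$ slightly below $h_1(x)$ the Pac-Man algorithm follows exactly the same record sequence up to $B_N = x$ (because $h_0(x) < h$ means the residual mass when Pac-Man first reaches $x$, namely $h - \nu(\cutT^{L_{N-1}}_{B_k}$-sums up to $B_{N-1}$), is positive but at most $\nu(\cutT_x)$, so the algorithm does \emph{not} eat the subtree above $x$ at the step reaching $x$ and instead continues strictly inside $\cutT_x$); hence $\pi_h \in \cutT_x$. The threshold $h_0(x) = h_1(x) - \nu(\cutT_x)$ is precisely the value of $h$ for which Pac-Man arrives at $x$ with exactly $\nu(\cutT_x)$ left to eat, i.e.\ the boundary case where it would swallow all of $\cutT_x$ in one bite and stop at $x$; for $h \le h_0(x)$ it stops strictly before $x$ (or at $x$, but then $\pi_h$ is an ancestor, not in $\cutT_x$ in the relevant "open" sense). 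For the converse, given $y \in \cutT_x$, I would simply take $h = h_1(y) \in (h_0(x), h_1(x)]$: membership $h_1(y) \le h_1(x)$ follows because the record sequence of $y$ extends that of $x$ (monotonicity of the $\nu$-masses along the branch), and $h_1(y) > h_0(x)$ because even the "last bite" at $x$ leaves mass at most $\nu(\cutT_x)$. The branchpoint case $x \in \cB(\cC)$ is the same analysis but with two sub-intervals: for $h \in (h_0(x), h_1(x)]$ Pac-Man reaches $x$, eats the subtree $\cutT_x^{\ell(x)}$ (or part of it) and so $\pi_h \in \cutT_x^{\ell(x)}$; for $h \in [h_1(x), h_2(x)]$ it has already swallowed all of $\cutT_x^{\ell(x)}$, retargets towards $\bar\ell(x)$, and continues inside $\cutT_x^{\bar\ell(x)}$, giving $\pi_h \in \cutT_x^{\bar\ell(x)}$; the converse uses $h = h_1(y)$ for $y \in \cutT_x^{\ell(x)}$ and $h = h_1(y)$ (which then lies in $[h_1(x), h_2(x)]$ by the identity $h_2(x) = h_1(x) + \nu(\cutT_x^{\bar\ell(x)})$) for $y \in \cutT_x^{\bar\ell(x)}$.

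\medskip

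The main obstacle I anticipate is being careful about the boundary values and the half-open versus closed intervals — in particular checking that the Pac-Man recursion genuinely produces the \emph{same} sequence $(B_k, L_k)$ for all $h$ in the stated interval up to the moment it reaches $x$, and disentangling the degenerate case where $\pi_h = x$ itself (which must be counted as lying in $\cutT_x$, $\cutT_x^{\ell(x)}$, or $\cutT_x^{\bar\ell(x)}$ as appropriate, consistently with the endpoint conventions $h = h_1(x)$ and $h = h_2(x)$). A secondary subtlety is the surjectivity ("conversely") direction when the record sequence of $y$ is infinite, i.e.\ $y$ a leaf: one must invoke Lemma~\ref{lem:cvx} to guarantee that $h = h_1(y)$ indeed has $\pi_{h_1(y)} = y$ and not some strict ancestor, but this has already been noted right after \eqref{eq:defh1}. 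Beyond these points the argument is a routine, if slightly tedious, induction on the index of the record sequence, so I would present it compactly rather than spelling out every step.
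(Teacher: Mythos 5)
The paper records Lemma~\ref{lem:target} without proof, calling it ``an immediate consequence of the definition of the Pac-Man construction,'' so there is no explicit argument to compare against; your plan is simply the careful unfolding of the definition that the paper leaves implicit, and it is sound. The central observation — that for $h\in(h_0(x),h_1(x)]$ the forward algorithm reproduces the record sequence of $x$ up to index $N-1$, and then the residual $H_{N-1}=h-h_1(B_{N-1})$ lies in $(0,\nu(\cC_x^{\ell})]$, forcing $B_N$ to land at or beyond $x$ on the branch towards $L_{N-1}$ (hence $\pi_h\in\cC_x$ or $\cC_x^{\ell(x)}$) — is correct, as is using $h=h_1(y)$ for the converse and appealing to Lemma~\ref{lem:cvx} when $y$ is a leaf. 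One small imprecision worth fixing when writing this up: you describe $h_0(x)$ as the value for which Pac-Man ``arrives at $x$ with exactly $\nu(\cC_x)$ left to eat,'' but in fact at $h=h_0(x)=h_1(B_{N-1})$ the algorithm stops at $B_{N-1}$ itself (residual zero), and for $h$ just above $h_0(x)$ the point $B_N$ jumps to somewhere deep inside $\cC_x$ near $L_{N-1}$, not to a neighbourhood of $x$; the correct reason the interval is open at $h_0(x)$ is simply $\pi_{h_0(x)}=B_{N-1}\notin\cC_x$.
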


Before proving the continuity of $F$, we  gather some preparatory lemmas.

\begin{lem}
\label{lem:leaf}
Let $\ell \in \mathcal{C}$ be a leaf. Then
$$ \nu( \mathcal{C}_{x}) \cdot \tau_{x}  \quad \mathop{\longrightarrow}_{x \rightarrow \ell} \quad  0.$$
\end{lem}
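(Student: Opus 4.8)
The statement says that $\nu(\cC_x)\cdot\tau_x\to 0$ as $x\to\ell$ for a leaf $\ell$ of $\cC$. The natural strategy is to recall the probabilistic meaning of both factors. Via Lemma~\ref{lem:bijectionsubtreescomponents} and the relation $t_{i,j}=\tau_{i\wedge j}$, if $x$ lies on $\llbracket\rho,i\rrbracket$ for some $i\geq 1$ and $x$ is (close to) a branchpoint $i\wedge j$, then $\nu(\cC_x)$ is the $\mu$-mass $\mu(\cT_{\tau_x-}(U_i))$ of a connected component of the fragmentation of $\cT$, and $\tau_x$ is the time at which this component splits. So $\nu(\cC_x)\cdot\tau_x$ is (a limit of) quantities of the form $t\cdot\mu_{t-}(U_i)$ evaluated along the fragmentation of the CRT. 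The point is that as $x\to\ell$, we are looking at later and later times in the fragmentation (since $\tau$ is increasing along branches and $\tau_{B_k}\to\infty$ along the record sequence, cf.\ the computation $\nu(\cC_{i_n\wedge i_{n+1}})\to 0$ in Section~\ref{ssec:cuttree}), and the mass of the relevant component goes to $0$ fast enough.

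\textbf{Key steps.} First I would reduce to showing that $\sup_{z\in\llbracket\rho,\ell\rrbracket,\ \tau_z\geq t}\nu(\cC_z)\cdot\tau_z\to 0$ as $t\to\infty$ — or rather, using monotonicity of $\tau$ along the branch $\llbracket\rho,\ell\rrbracket$, it suffices to control $\nu(\cC_z)\cdot\tau_z$ for $z$ far enough along this branch. Second, by the definition \eqref{eq:deft} of $\tau$, we have
\begin{equation*}
\tau_x=\int_{\llbracket\rho,x\rrbracket}\frac{1}{\nu(\cC_z)}\lambda(\mathrm dz),
\end{equation*}
and since $z\mapsto\nu(\cC_z)$ is nonincreasing along $\llbracket\rho,\ell\rrbracket$ and $\nu(\cC_z)\to 0$ as $z\to\ell$ (because $\ell$ is a leaf), the key inequality is: for $x\prec x'$ on $\llbracket\rho,\ell\rrbracket$,
\begin{equation*}
\tau_{x'}-\tau_x=\int_{\llbracket x,x'\rrbracket}\frac{1}{\nu(\cC_z)}\lambda(\mathrm dz)\geq\frac{d(x,x')}{\nu(\cC_x)}\geq\frac{\nu(\cC_x)-\nu(\cC_{x'})}{\nu(\cC_x)}.
\end{equation*}
Wait — this needs $d(x,x')\geq\nu(\cC_x)-\nu(\cC_{x'})$, which is generally false; instead the cleaner route is to bound $\tau_x$ directly. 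The honest approach: using that along $\llbracket\rho,\ell\rrbracket$ the function $\nu(\cC_\cdot)$ decreases from $1$ to $0$, write $\tau_\ell=\int_{\llbracket\rho,\ell\rrbracket}\nu(\cC_z)^{-1}\lambda(\mathrm dz)$. If $\tau_\ell<\infty$ we are immediately done since then $\nu(\cC_x)\tau_x\leq\nu(\cC_x)\tau_\ell\to 0$. So the real content is the case $\tau_\ell=\infty$. Here I would use the substitution $m=\nu(\cC_z)$: parametrizing the branch $\llbracket\rho,\ell\rrbracket$ by $\ell$-arc length $s\in[0,d(\rho,\ell)]$, with $m(s)=\nu(\cC_{z(s)})$ nonincreasing and $m(s)\to 0$, we get $\tau_{z(s)}=\int_0^s m(r)^{-1}\,\mathrm dr$, and I must show $m(s)\int_0^s m(r)^{-1}\,\mathrm dr\to 0$ as $s\uparrow d(\rho,\ell)$.

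\textbf{Main obstacle.} The crux is exactly this analytic fact about a nonincreasing function $m$ with $m(s)\to 0$: is it true that $m(s)\int_0^s m(r)^{-1}\mathrm dr\to 0$? This is \emph{false} for deterministic $m$ in general (take $m(s)=(d(\rho,\ell)-s)$; then the integral is $\sim\log\frac1{d(\rho,\ell)-s}$ and the product tends to $0$; but take $m(s)=e^{-1/(d(\rho,\ell)-s)}$ — then $\int_0^s m^{-1}\sim m(s)^{-1}(d(\rho,\ell)-s)^2$ and the product $\sim(d(\rho,\ell)-s)^2\to 0$; actually one needs $m$ to decay faster than any power to break it, e.g.\ genuinely the failure needs $m'/m^2\to-\infty$ integrably-ish). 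So the statement is \emph{not} purely analytic — it uses the fine a.s.\ behaviour of $z\mapsto\nu(\cC_z)$ near a leaf of the Brownian CRT, i.e.\ the fact (Theorem~\ref{thm:cuttreecrtiscrt}) that $\cC$ is a Brownian CRT, so that near a leaf the mass decays in a controlled Hölder-type way and, crucially, $\tau$ itself can be shown to grow only logarithmically (this is the kind of estimate underlying the Rayleigh limit for cutting times). Concretely, I expect the proof to invoke: (a) $\mu_t(U_i)\to 0$ a.s.\ as $t\to\infty$, with the classical rate $t\cdot\mu_t(U_i)\to 0$ a.s.\ coming from the explicit law of the fragmentation of the CRT (equivalently the fact that the isolation time of a $\mu$-random point, rescaled, is Rayleigh so cutting-time grows like $\sqrt{\text{depth}}$ not linearly); (b) a union bound / continuity argument over the branch to pass from the countably many reference leaves $U_i$ to the given leaf $\ell$ of $\cC$, using that $\nu(\cC_x)\tau_x$ is, along $\llbracket\rho,\ell\rrbracket$, a decreasing-then-controlled object sandwiched between the values at nearby branchpoints $i\wedge j$. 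The main difficulty, then, is making the transfer in (b) rigorous — relating $\nu(\cC_x)\tau_x$ for an arbitrary $x$ near $\ell$ to $\mu_{t_{i,j}-}(U_i)\cdot t_{i,j}$ for an approximating branchpoint — and invoking the sharp a.s.\ decay of $t\mapsto t\,\mu_t(U_i)$, rather than the (easy but insufficient) fact that $\mu_t(U_i)\to 0$.
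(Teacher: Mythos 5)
You came very close to the paper's proof and then talked yourself out of it. The claim that the analytic fact ``is false for deterministic $m$ in general'' is wrong: for a nonincreasing $m:[0,L)\to(0,\infty)$ with $L<\infty$ and $m(s)\to 0$, one always has $m(s)\int_0^s m(r)^{-1}\,\mathrm dr\to 0$. Indeed,
\begin{equation*}
m(s)\int_0^s \frac{\mathrm dr}{m(r)} \;=\; \int_0^L \frac{m(s)}{m(r)}\,\mathbbm{1}_{\{r\le s\}}\,\mathrm dr,
\end{equation*}
and for every fixed $r<L$ the integrand is bounded by $1$ (monotonicity of $m$) and tends to $0$ as $s\uparrow L$ (since $m(s)\to 0$ while $m(r)>0$ is fixed); dominated convergence on the finite interval $[0,L]$ does the rest. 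Your tentative counterexamples like $m(s)=e^{-1/(L-s)}$ in fact confirm this — you computed the product to tend to $0$ — and no rate of decay can break it, because the decisive input is simply that $d(\rho,\ell)<\infty$ (compactness of $\cC$), not any fine structure of the Brownian CRT.

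This dominated-convergence argument is exactly the paper's proof, phrased with $z\mapsto\nu(\cC_z)$ in place of $m$: one writes
\begin{equation*}
\nu(\cC_x)\,\tau_x \;=\; \int_{\llbracket\rho,\ell\rrbracket}\frac{\nu(\cC_x)}{\nu(\cC_z)}\,\mathbbm{1}_{\{z\in\llbracket\rho,x\rrbracket\}}\,\lambda(\mathrm dz),
\end{equation*}
notes that the integrand is bounded by $1$ because $\nu(\cC_\cdot)$ is nonincreasing along $\llbracket\rho,\ell\rrbracket$, and that it tends pointwise to $0$ since $\nu(\cC_x)\to 0$ as $x\to\ell$ (a consequence of $\nu$ being diffuse and $\cC_x\downarrow\{\ell\}$). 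The detour you then propose — invoking the Rayleigh law of isolation times, a.s.\ decay rates of $t\mapsto t\,\mu_t(U_i)$, and a transfer from the countable family $(U_i)$ to the given leaf $\ell$ — is not just unnecessary, it would be considerably harder to make rigorous than the two-line argument you had already set up. The gap in your proposal is therefore a misdiagnosis: you correctly reduced the statement to an integral estimate along the branch, but wrongly believed that estimate to require probabilistic input.
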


\begin{proof}
Write
$$ \nu( \mathcal{C}_{x}) \cdot \tau_{x}= \nu( \mathcal{C}_{x}) \cdot \int_{\llbracket \rho,x \rrbracket} \frac{1}{\nu( \mathcal{C}_{z}) } \lambda(\mathrm{d}z)=  \int_{\llbracket \rho, \ell  \rrbracket} \frac{ \nu( \mathcal{C}_{x}) }{\nu( \mathcal{C}_{z}) } \mathbbm{1}_{z \in \llbracket \rho, x \rrbracket } \lambda(\mathrm{d}z).$$
Then observe that the quantity ${ \nu( \mathcal{C}_{x}) }/{\nu( \mathcal{C}_{z}) } \mathbbm{1}_{z \in \llbracket \rho, x \rrbracket }$ is bounded by $1$ and tends to $0$ since $\nu ( \mathcal{C}_{x}) \rightarrow 0$ as $x \rightarrow \ell$. The conclusion follows by dominated convergence. 
\end{proof}

The next lemma compares the difference between two values of $h$ with masses of subtrees in the cut-tree $\cutT$.

\begin{lem}
\label{lem:h1nux}
Take $x \prec y$ in $\mathcal{C}$ with $x \in \Sk(\cC)$  and assume that $\pi_{h'}=y$. Then:
\begin{itemize}
\item[(i)] when $y \in \Sk(\cC)$ we have $|h_{1}(x)-h'| \leq \nu(\cC_{x} \backslash \cC_{y})$.
\item[(ii)] $|h_{1}(x)- h'| \leq \nu(\cC_{x})$;
\end{itemize}
\end{lem}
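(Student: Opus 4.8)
Here is how I would approach the proof.

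The plan is to reduce both inequalities to a comparison between the \emph{record sequences} (the backward construction of Section~\ref{ssec:continuity}) associated with $x$ and with $y$, exploiting that $x\in\Sk(\cC)$: there is a single branch of $\cC$ above $x$, so any path that reaches a point of $\cC_x$ must first pass through $x$. I would write $(B_k,L_k)_{0\le k\le N}$ for the record sequence of $x$; since $x$ is not a leaf we have $N<\infty$ by Lemma~\ref{lem:cvx}, with $B_N=x$ and $x\preceq L_{N-1}$, hence $\cC_x^{L_{N-1}}=\cC_x$. The structural claim I would prove is:
\begin{itemize}
\item[(a)] the record sequence $(B'_k,L'_k)_{0\le k\le N'}$ of $y$ (length $N'\in\Z_{+}\cup\{+\infty\}$) satisfies $(B'_k,L'_k)=(B_k,L_k)$ for $0\le k\le N-1$;
\item[(b)] $x\prec B'_N$; the subtrees $\cC_{B'_k}^{L'_{k-1}}$ for $N\le k<N'+1$ are pairwise disjoint subtrees of $\cC_x$; and, when $y\in\Sk(\cC)$, the one indexed by $k=N'$ equals $\cC_y$.
\end{itemize}

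I would prove (a) by induction on $k$: if $(B'_j,L'_j)=(B_j,L_j)$ for $j\le k$ with $k\le N-2$, then $B_k\prec x$ and $x\not\preceq L_k$ (otherwise the record sequence of $x$ would terminate before step $N$), so the path from $B_k$ to $L_k$ leaves the path from $B_k$ to $x$ at a point strictly below $x$; as $x\prec y$ and the paths from $x$ to $y$ and from $x$ to $L_k$ both enter the unique branch above $x$, the path from $B_k$ to $y$ leaves the path to $L_k$ at the very same point, whence $B'_{k+1}=B_{k+1}$ — and this is a branchpoint (a record sequence never stalls at a skeleton point before terminating), so $L'_{k+1}=L_{k+1}$ as well. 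For (b): at step $N$ the paths from $B_{N-1}$ to $y$ and to $L_{N-1}$ follow $\llbracket B_{N-1},x\rrbracket$ and then enter the single branch above $x$, so they cannot separate \emph{at} the skeleton point $x$; hence $B'_N=y\wedge L_{N-1}\succ x$, all later $B'_k$ lie in $\cC_x$, the corresponding eaten subtrees lie in $\cC_x$, and (being eaten subtrees of a record sequence) they are pairwise disjoint; and when $y\in\Sk(\cC)$ the record sequence of $y$ ends at $B'_{N'}=y$ with last eaten subtree $\cC_y^{L'_{N'-1}}=\cC_y$.

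Next I would connect $h'$ to this bookkeeping. Since $\pi_{h'}=y$, the forward Pac-Man trajectory with parameter $h'$ must coincide with the record sequence of $y$ up to the step at which it reaches $y$: if at some step it stopped short of the record-sequence branchpoint it would swallow a subtree containing $y$, and if it overshot it it would afterwards turn into a subtree avoiding $y$ — both contradicting $\pi_{h'}=y$. Hence $h'=h_1(y)$ when $y$ is a leaf or $y\in\Sk(\cC)$, and $h'\in\{h_1(y),h_2(y)\}$ when $y\in\cB(\cC)$; in all cases, by \eqref{eq:defh1} together with (a)--(b), $h'=h_0(x)+m'$ where $m'$ is the $\nu$-mass of a disjoint union of subtrees of $\cC_x$ (the subtrees $\cC_{B'_k}^{L'_{k-1}}$, $N\le k<N'+1$, plus the extra subtree $\cC_y^{\bar{\ell}(y)}\subseteq\cC_y$ in the case $h'=h_2(y)$), and one also uses $x\in\Sk(\cC)$ to write the last term of $h_1(x)=\sum_{1\le k<N+1}\nu(\cC_{B_k}^{L_{k-1}})$ as $\nu(\cC_x^{L_{N-1}})=\nu(\cC_x)$, i.e.\ $h_1(x)=h_0(x)+\nu(\cC_x)$. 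Therefore
\[
h_1(x)-h'=\nu(\cC_x)-m',\qquad 0\le m'\le\nu(\cC_x),
\]
which is (ii); and when $y\in\Sk(\cC)$ we have $h'=h_1(y)$ and $\cC_y$ appears in the disjoint union defining $m'$, so $\nu(\cC_y)\le m'\le\nu(\cC_x)$ and hence $0\le h_1(x)-h'\le\nu(\cC_x)-\nu(\cC_y)=\nu(\cC_x\setminus\cC_y)$, which is (i).

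The hard part is the structural claim (a)--(b) together with the ``forcing'' statement of the last paragraph — that is, tracking exactly which eaten subtrees lie inside $\cC_x$ and checking that $\pi_{h'}=y$ leaves the trajectory no freedom until $y$ is reached. The hypothesis $x\in\Sk(\cC)$ is precisely what makes the record sequences of $x$ and $y$ agree until $x$ is reached (a single branch above $x$); without it the two sequences could branch apart above $x$, the identity $h'=h_0(x)+m'$ would fail, and the argument would not go through.
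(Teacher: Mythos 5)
Your proposal is correct and follows essentially the same route as the paper: both arguments compare the record sequences of $x$ and $y$, note that they agree for the first $N-1$ steps because $x$ lies in the skeleton (so the unique branch above $x$ forces the backward path to $y$ through $x$ and hence $B'_{N}\succ x$), use $\nu(\cC_x^{L_{N-1}})=\nu(\cC_x)$, and bound $|h_1(x)-h'|$ by the $\nu$-mass of the part of $\cC_x$ not covered by the disjoint eaten subtrees indexed by $N\le k<N'+1$ (plus the extra subtree when $h'=h_2(y)$), obtaining (ii) and, using $\cC_y$ as one of those subtrees, (i). The only difference is bookkeeping: the paper phrases the disjoint-union bound via sets $\mathcal{A}$, $\mathcal{B}$ and an indicator $\mathbbm{1}_{h'\in\mathbb{h}_2}$, while you package the same thing as $h'=h_0(x)+m'$ and spell out the "forcing" step in more detail.
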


\begin{proof}
Let $(B^{1}_{i},L^{1}_{i})_{0 \leq i < N_{1}+1}$ and respectively $(B^{2}_{i},L^{2}_{i})_{0 \leq i < N_{2}+1}$ be the record sequences associated with $x$ and $y$. Since $x \prec y$, we have $N_{1} \leq N_{2}$. Also, $x$ is not a leaf, so that $N_{1}<\infty$ and $x= B^{1}_{N_{1}}$.  Observe that we may have $B^{1}_{N_{1}} \neq B^{2}_{N_{1}}$ (e.g.~if $x \in \Sk(\cC)$).  Then $(B^{1}_{i},L^{1}_{i})_{0 \leq i < N_{1}} =(B^{2}_{i},L^{2}_{i})_{0 \leq i < N_{1}}$ and $B_{N_{1}}^{2} \in \llbracket x,L^{1}_{N_{1}-1} \rrbracket$, 
 \begin{figure}[!ht] \centering
\includegraphics[scale=0.6]{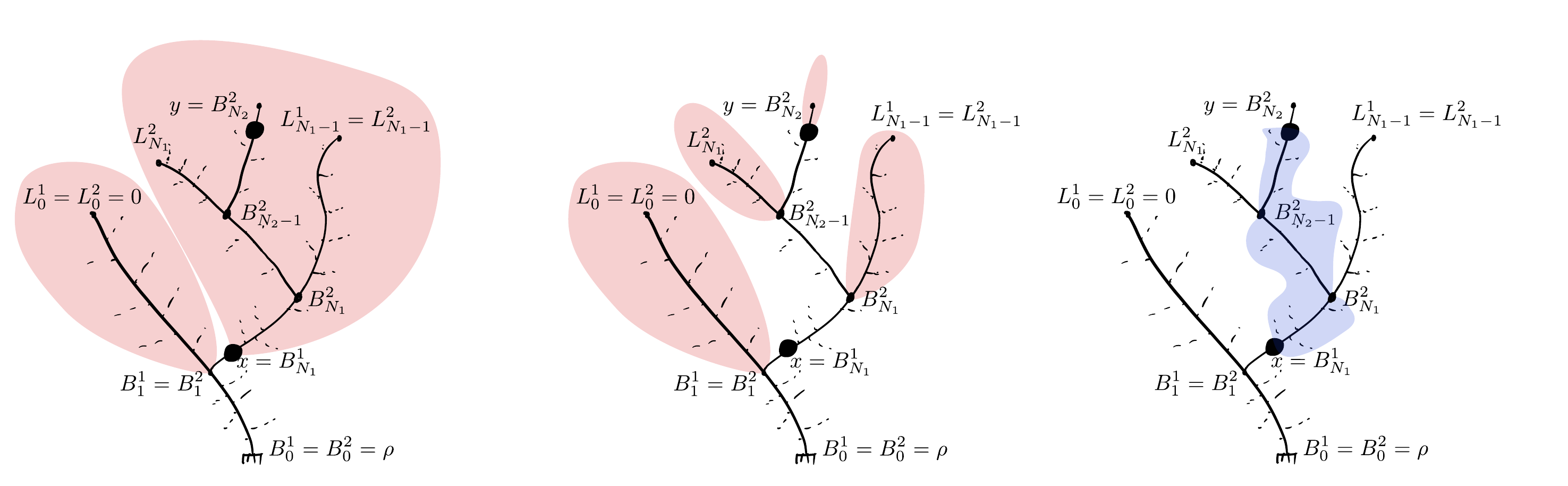}
\caption{Illustration of the proof of Lemma \ref{lem:h1nux} when $x,y \in \Sk(\cC)$. Here $N_{1}=2$ and $N_{2}=4$. Left: the sum of the masses of the two red subtrees is $h_{1}(x)$. Middle: the sum of the masses of the four red subtrees is $h_{1}(y)$. Right: the difference $|h_{1}(x)-h'|$ is at most the mass of the blue subtree $ \mathcal{A}\backslash \mathcal{B}$.}
\label{fig:lemh1h2}
\end{figure}

Recall the notation $\mathbb{h}_{2}$ from \eqref{eq:h2}. Then
$$h_{1}(x)=\sum_{1 \leq k <N_{1}+1} \nu \Big(\cutT_{B^{1}_{k}}^{L^{1}_{k-1}}\Big)\quad \textrm{and} \quad h'=\sum_{1 \leq k <N_{2}+1} \nu \Big(\cutT_{B^{2}_{k}}^{L^{2}_{k-1}}\Big)+\mathbbm{1}_{h' \in \mathbb{h}_{2}} \nu \Big(\cutT_{B^{2}_{N_{2}}}^{L^{2}_{N_{2}}}\Big),$$
so that
$$h_{1}(x)-h'= \sum_{N_{1} \leq k< N_{2}+1} \nu \Big(\cutT_{B^{2}_{k}}^{L^{2}_{k-1}}\Big) +\mathbbm{1}_{h' \in \mathbb{h}_{2}}  \nu \Big(\cutT_{B^{2}_{N_{2}}}^{L^{2}_{N_{2}}}\Big)-\nu \Big(\cutT_{B^{1}_{N_{1}}}^{L^{1}_{N_{1}-1}}\Big).$$ 
Now define
$$ \mathcal{A}= \cutT_{B^{1}_{N_{1}}}^{L^{1}_{N_{1}-1}}, \qquad  \mathcal{B}= \bigcup_{N_{1} \leq k<N_{2}+1} \cutT_{B^{2}_{k}}^{L^{2}_{k-1}}.$$
Observe that the union defining $ \mathcal{B}$ is disjoint. 
When $y \in \Sk(\cC)$ we have $h' \not \in \mathbb{h}_{2}$, $ \mathcal{A} \backslash \mathcal{B} \subset    \cC_{x} \backslash \cC_{y}$. When $y$ is a branchpoint, setting $ \mathcal{B}'= \mathcal{B} \cup \cutT_{B^{2}_{N}}^{L^{2}_{N}}  $, observe that this union is disjoint and  $ \mathcal{A} \backslash \mathcal{B} \subset    \cC_{x}$. The conclusion follows.
\end{proof}

The next lemma bounds from above the difference between two values of $F$.

\begin{lem}
\label{lem:F}
Take $x \prec y$ in $\mathcal{C}$ and $h,h' \in [0,1]$ such that $\pi_{h}=x$ and $\pi_{h'}=y$. Then
$$|F(h)-F(h')| \leq  \tau_{x} |h-h'|+ \dC(x,y).$$
\end{lem}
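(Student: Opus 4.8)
Fix $x \prec y$ with $\pi_h = x$ and $\pi_{h'} = y$, and let $(B_i, L_i)_{0 \le i < N+1}$ be the record sequence associated with $x$ (so $N < \infty$ if $x$ is not a leaf; if $x$ is a leaf I will argue by approximation at the end). The starting observation is that the Pac-Man runs associated with $h$ and with $h'$ \emph{share a common prefix}: because $x \prec y$, the record sequences of $x$ and of $y$ agree up to the index where they separate, and comparing the definitions of $F$ in \eqref{eq:defF} shows that $F(h) - F(h')$ is a sum of terms $\tau_{B_k} \cdot \nu(\cdot)$ over the part of the two runs that differ. I plan to split $F(h) - F(h')$ into two contributions: the mass eaten by one run but not the other (this lives inside $\cC_x \setminus \cC_y$ or inside a subtree that Lemma~\ref{lem:h1nux} controls), and the discrepancy coming from the fact that along the shared prefix the \emph{same} subtrees are eaten but they are weighted by a common factor $\tau_{B_k}$ which never exceeds $\tau_x$ (since $\tau$ is increasing along branches and $B_k \preceq x$).

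\textbf{Key steps.} First I would write out $F(h)$ and $F(h')$ explicitly using the record sequences and the relation between $h_1, h_2$ and the Pac-Man construction established after \eqref{eq:defh1}–\eqref{eq:defh2}: both $h$ and $h'$ are sums of the form $\sum \nu(\cutT_{B_k}^{L_{k-1}})$ (with a possible extra $\mathbbm{1}_{\cdot \in \mathbb{h}_2}$ term), and $F$ replaces each summand $\nu(\cutT_{B_k}^{L_{k-1}})$ by $\tau_{B_k}\,\nu(\cutT_{B_k}^{L_{k-1}})$. Second, I subtract: the terms indexed by the common prefix cancel in $h - h'$ but in $F(h) - F(h')$ they leave a residue $\sum_{k} (\tau_{B_k^{(h)}} - \tau_{B_k^{(h')}})\,\nu(\cdot)$ — however since the prefix is genuinely common (same $B_k$, same $L_k$), these residues vanish except possibly at the single index where the runs first differ, whose $B$-vertex is an ancestor of both $x$ and $y$ and hence has $\tau \le \tau_x$. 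Third, the remaining terms are those eaten by exactly one of the two runs; their total $\nu$-mass is exactly $|h - h'|$ by the conservation identity $h = \sum_k \nu(\cutT_{B_k}^{L_{k-1}})$, and each is weighted by a $\tau_{B_k}$ with $B_k$ lying on $\llbracket \rho, y\rrbracket$. Here I would telescope as in the proof of Lemma~\ref{lem:bounded}: bound the contribution of the $\tau$-factors \emph{beyond} $x$ by $\sum d(B_i, B_{i+1})$ along $\llbracket x, y\rrbracket$, which is at most $\dC(x,y)$, and bound the contribution of the $\tau$-factors \emph{up to} $x$ by $\tau_x$ times the mass, giving $\tau_x |h-h'|$. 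Summing the two bounds yields $|F(h) - F(h')| \le \tau_x |h - h'| + \dC(x,y)$.

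\textbf{Main obstacle.} The delicate point is the bookkeeping of \emph{which} subtrees are eaten by which run and with what weight — in particular handling the index where the two runs diverge, where $B^{(h)}_k$ and $B^{(h')}_k$ may differ (one lies on $\llbracket \rho, x\rrbracket$, the other on $\llbracket \rho, y\rrbracket$, but both are ancestors of their respective targets and of each other up to that point), and making sure the "extra" $\mathbbm{1}_{h \in \mathbb{h}_2}$ term that appears when $x$ is a branchpoint is absorbed into the $\tau_x |h-h'|$ part rather than the distance part. I expect the cleanest route is to first reduce to the case where $y$ is a branchpoint or a skeleton point (the leaf case for $y$ following by continuity of $h \mapsto \pi_{h}$-data, or by a limiting argument), then apply Lemma~\ref{lem:h1nux} to identify the symmetric-difference mass with $|h - h'|$ up to the error controlled there, and finally run the telescoping estimate. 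The case $x$ a leaf is handled by taking a sequence of branchpoints $x_n \to x$ with $\pi_{h_n} = x_n$ and $h_n \to h$ (using Lemma~\ref{lem:leaf} to see $\tau_{x_n}\nu(\cC_{x_n}) \to 0$, hence the relevant quantities converge), passing to the limit in the already-established inequality for $x_n \prec y$.
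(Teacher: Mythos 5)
Your approach is essentially the paper's: both share the common prefix of the record sequences for $x$ and $y$, cancel it in the explicit formulas for $F(h)$ and $F(h')$, split each weight $\tau_{B^2_k}$ as $\tau_x + (\tau_{B^2_k} - \tau_x)$ to isolate the $\tau_x(h-h')$ term, and bound the residual sum $\sum_{N_1 \le k}(\tau_{B^2_k}-\tau_x)m_k$ by $\dC(x,y)$ via the same Abel/telescoping argument used in Lemma~\ref{lem:bounded}. Two minor inaccuracies worth flagging: since $x \prec y$, $x$ has a strict descendant and hence cannot be a leaf, so your closing limiting argument ``the case $x$ a leaf'' is vacuous (the paper simply records $N_1<\infty$); and the shared prefix cancels \emph{exactly} (same $B_k$, $L_k$, masses, and $\tau_{B_k}$), so there is no ``residue'' there — the whole difference, together with the delicate bookkeeping involving the indicator corrections, lives at indices $k \ge N_1$, where $B^1_{N_1}=x$ while $B^2_{N_1}\succeq x$ may differ.
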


\begin{proof}
We keep the notation introduced in the beginning of Lemma \ref{lem:h1nux} (in particular it may be helpful to also refer to Fig.~\ref{fig:lemh1h2}): we denote by $(B^{1}_{i},L^{1}_{i})_{0 \leq i < N_{1}+1}$ and respectively $(B^{2}_{i},L^{2}_{i})_{0 \leq i < N_{2}+1}$ the record sequences associated with $x$ and $y$, so that $N_{1} \leq N_{2}$, $x$ is not a leaf,  $N_{1}<\infty$,  $x= B^{1}_{N_{1}}$, $(B^{1}_{i},L^{1}_{i})_{0 \leq i < N_{1}} =(B^{2}_{i},L^{2}_{i})_{0 \leq i < N_{1}}$ and $B_{N_{1}}^{2} \in \llbracket x,L^{1}_{N_{1}-1} \rrbracket$. Recall the definitions of $F$ in \eqref{eq:defF} and of $h_{1},h_{2}$ in \eqref{eq:defh1}, \eqref{eq:defh2}. 

We have
$$h=\sum_{1 \leq k <N_{1}+1} \nu \Big(\cutT_{B^{1}_{k}}^{L^{1}_{k-1}}\Big)+\mathbbm{1}_{h \in \mathbb{h}_{2}}  \nu\Big(\cutT_{B^{1}_{N_1}}^{L^{1}_{N_1}}\Big), \qquad F(h)=\sum_{1 \leq k <N_{1}+1} \tau_{B^{1}_{k}}\nu \Big(\cutT_{B^{1}_{k}}^{L^{1}_{k-1}}\Big)+\mathbbm{1}_{h \in \mathbb{h}_{2}} \tau_{B^{1}_{N_{1}}} \nu\Big(\cutT_{B^{1}_{N_{1}}}^{L^{1}_{N_{1}}}\Big) $$
and
$$h'=\sum_{1 \leq k <N_{2}+1} \nu \Big(\cutT_{B^{2}_{k}}^{L^{2}_{k-1}}\Big)+\mathbbm{1}_{h' \in \mathbb{h}_{2}} \nu \Big(\cutT_{B^{2}_{N_2}}^{L^{2}_{N_2}}\Big), \qquad F(h')=\sum_{1 \leq k <N_{2}+1} \tau_{B^{2}_{k}}\nu \Big(\cutT_{B^{2}_{k}}^{L^{2}_{k-1}}\Big)+\mathbbm{1}_{h' \in \mathbb{h}_{2}} \tau_{B^{2}_{N_{2}}} \nu\Big(\cutT_{B^{2}_{N_{2}}}^{L^{2}_{N_{2}}}\Big).$$
Thus, setting $$m_{k}=\nu \Big(\cutT_{B^{2}_{k}}^{L^{2}_{k-1}}\Big) \textrm{ for } k<N_{2}, \qquad m_{N_{2}}=\nu\Big(\cutT_{B^{2}_{N_{2}}}^{L^{2}_{N_{2}-1}}\Big)+\mathbbm{1}_{h' \in \mathbb{h}_{2}} \nu\Big(\cutT_{B^{2}_{N_{2}}}^{L^{2}_{N_{2}}}\Big)$$
with the convention that $m_{N_2}=0$ if $N_2=\infty$ and remembering that $x= B^{1}_{N_{1}}$, we have
$$F(h)-F(h')= \tau_{x}\nu \Big(\cutT_{B^{1}_{N_{1}}}^{L^{1}_{N_{1}-1}}\Big)+\mathbbm{1}_{h \in \mathbb{h}_{2}} \tau_{x} \nu\Big(\cutT_{B^{1}_{N_{1}}}^{L^{1}_{N_{1}}}\Big) -\sum_{N_{1} \leq k <N_{2}+1} \tau_{B^{2}_{k}} m_{k}$$
and
$$h-h'= \nu \Big(\cutT_{B^{1}_{N_{1}}}^{L^{1}_{N_{1}-1}}\Big)+\mathbbm{1}_{h \in \mathbb{h}_{2}}  \nu\Big(\cutT_{B^{1}_{N_{1}}}^{L^{1}_{N_{1}}}\Big) -\sum_{N_{1} \leq k <N_{2}+1} m_{k}.$$
It follows that
\begin{equation}
\label{eq:FhFhprime}
F(h)-F(h')=  \tau_{x} (h-h')+ \sum_{N_{1} \leq k <N_{2}+1}  (  \tau_{x} - \tau_{B^{2}_{k}}) m_{k}.
\end{equation}
In particular,
$$|F(h)-F(h')| \leq  \tau_{x} |h-h'|+ \sum_{N_{1} \leq k <N_{2}+1}  (  \tau_{B^{2}_{k}}- \tau_{x} ) m_{k}.$$

To control the sum, we perform an Abel transformation by setting $\sigma_{k}=\tau_{B^{2}_{k}}-\tau_{B^{2}_{k-1}}$ for $ N_{1}<k<N_{2}+1$ and $\sigma_{N_{1}}=\tau_{B^{2}_{N_{1}}}-\tau_{x}$. Then 
$$
\sum_{N_{1}  \leq  k <N_{2}+1} (\tau_{B^{2}_{k}}-\tau_{x}) \cdot m_{k}= \sum_{N_{1}  \leq  k <N_{2}+1} \sum_{ i \in \llbracket N_1,k \rrbracket} \sigma_{i}   \cdot m_{k}= \sum_{ N_{1} \leq i < N_{2}+1}  \sigma_{i} \sum_{k \in \llbracket i, N_2+1 \llbracket}  m_{k}.
$$
Then observe that, as in the proof of Lemma \ref{lem:h1nux},  for every $ N_{1} \leq i <N_{2}+1$ we have $ \sum_{k \in \llbracket i, N_2+1 \llbracket} m_{k} \leq \nu\big( \mathcal{C}_{B^{2}_{i}}\big)$. Also,  $\sigma_{N_{1}} \leq  \dC(x,B^{2}_{N_{1}})/ \nu( \mathcal{C}_{B^{2}_{N_{1}}} )$ and $\sigma_{i} \leq  \dC(B^{2}_{i-1},B^{2}_{i}) / \nu( \mathcal{C}_{B^{2}_{i}})$ for $N_{1} <i<N_{2}+1$. The conclusion follows.
\end{proof}

We can now prove the continuity of the function $F$.

\begin{proof}[Proof of Proposition \ref{prop:continuity}]
Fix $h \in [0,1]$. We want to prove that $F$ is continuous at $h$. We distinguish several cases according to the nature of the final target point $z := \pi_h$.

\emph{$\star$ First case:} $z$ is a leaf (see Fig.~\ref{fig:case-1-2}, left). Fix $\varepsilon>0$. Using in particular Lemma \ref{lem:leaf} and the fact that $\cC$ is a Brownian CRT, we may choose $x \in \Sk(\cC)$ such that  $x \prec z$ and $\tau_{x} \, \nu( \cC_{x}) \leq \varepsilon$, $\nu(\cC_{x}) \leq \varepsilon$ and $\mathsf{Diam}(\cC_{x})<\varepsilon$.  Observe that  $h_{1}(x)-\nu(\cC_{x})<h<h_{1}(x)$ by Lemma \ref{lem:target}. Then, by Lemmas \ref{lem:h1nux} (ii) and \ref{lem:F}, we have
$$|F(h_{1}(x))-F(h)| \leq \tau_{x} |h_{1}(x)-h|+ \dC(x,z)\leq  \tau_{x} \nu(\cC_{x}) + \varepsilon \leq 2 \varepsilon.$$ Next,  take $ h' \in (h_{1}(x)-\nu(\cC_{x}),h_{1}(x))$.
By Lemma \ref{lem:target}, there exists $y \in \cC_{x}$ such that $\pi_{h'}=y$. Then, as before, again by Lemmas \ref{lem:h1nux} (ii) and \ref{lem:F}, we have $$|F(h_{1}(x))-F(h')| \leq \tau_{x} |h_{1}(x)-h'|+ \dC(x,z)\leq  \tau_{x} \nu(\cC_{x}) + \varepsilon \leq 2 \varepsilon.$$ We conclude that $|F(h)-F(h')| \leq 4 \varepsilon$.

\emph{$\star$ Second case:} $z \in \Sk(\cC)$   (see Fig.~\ref{fig:case-1-2}, right). Fix $\epsilon>0$. Let $(B_{i},L_{i})_{0 \leq i < N+1}$  be the record sequence associated with $z$, so that $z=B_{N}$ and $z \in \rrbracket B_{N-1}, L_{N-1}\llbracket$. Fix $z' \in \cC$ such that $z \prec z' \prec L_{N-1}$. Take $\varepsilon>0$.
Then choose $u,v \in \Sk(\cC)$  such that $B_{N-1} \prec u \prec z \prec v  \prec z'$ such that $\nu( \cC_{u} \backslash \cC_{v}) \leq \varepsilon/ \tau_{z'}$ and $\mathsf{Diam}( \cC_{u} \backslash \cC_{v}) \leq \varepsilon$.
Then, by Lemmas \ref{lem:h1nux} (i) and \ref{lem:F}, we have
$$|F(h_{1}(u))-F(h)| \leq \tau_{u} |h_{1}(u)-h|+ \dC(u,z)\leq  \tau_{z'} \nu ( \cC_{u} \backslash \cC_{z})+\varepsilon \leq  \tau_{z'} \nu ( \cC_{u} \backslash \cC_{v})+\varepsilon \leq 2 \varepsilon.$$
Next, observe that by Lemma \ref{lem:decreasing} we have $h_{1}(v)<h<h_{1}(u)$. Take $h' \in (h_{1}(v),h_{1}(u))$. By Lemma \ref{lem:target}, since $v \in \mathcal{C}_{u}$, we have $h_{1}(u)-\nu (\mathcal{C}_{u})<h_{1}(v)$, so we have $ h_{1}(u)-\nu( \mathcal{C}_{u})< h' < h_{1}(u)$.

 By Lemma \ref{lem:target}, there exists $y \in \cC_{u}$ such that $\pi_{h'}=y$. Then, again by Lemma \ref{lem:F}, we have
$$|F(h_{1}(u))-F(h')| \leq \tau_{u} |h_{1}(u)-h'|+ \dC(u,y)\leq  \tau_{z'}(h_{1}(u)-h_{1}(v))+\varepsilon \leq  \tau_{z'} \nu ( \cC_{u} \backslash \cC_{v})+\varepsilon \leq 2 \varepsilon$$
where we have also used Lemma \ref{lem:h1nux} (i) to write $h_{1}(u)-h_{1}(v) \leq   \nu ( \cC_{u} \backslash \cC_{v})$.
We conclude that $|F(h)-F(h')| \leq 4 \varepsilon$.

 \begin{figure}[!ht] \centering
\includegraphics[scale=0.8]{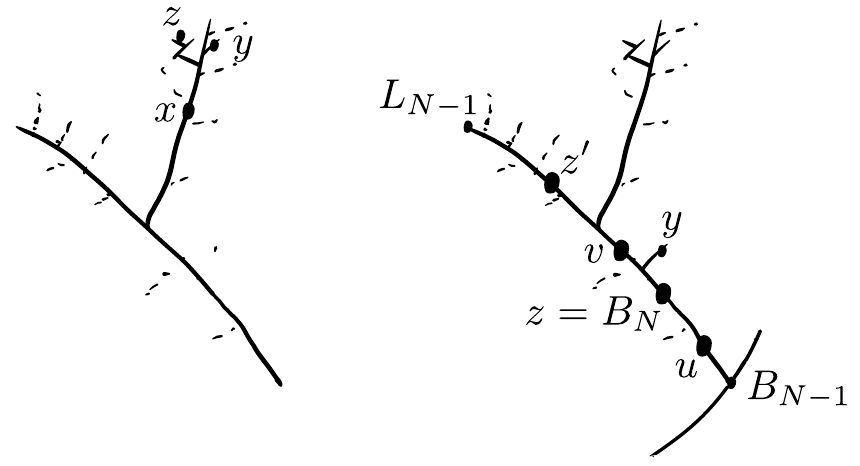}
\caption{Illustration of the first case when $z$ is a leaf (left) and the second case  where $z \in \Sk(\cC)$  (right).}
\label{fig:case-1-2}
\end{figure}

\emph{$\star$ Third case:} $z$ is a branchpoint. Let $(B_{i},L_{i})_{0 \leq i < N+1}$  be the record sequence associated with $z$ with $N<\infty$.  We consider two subcases.

\emph{$\star$ $\star$ First subcase:} $h$ is of the form $h=h_{1}(z)$ (see Fig.~\ref{fig:case-3-1}). We first show that $F$ is right-continuous at $h$. Fix $\epsilon>0$. Choose a point $ u \in \Sk(\cC)$ such that $z \prec u \prec L_{N}$ and  $\nu( \cC_{u}) \leq \varepsilon/ \tau_{u}$ and $\mathsf{Diam}(\cC_{u})<\varepsilon$. By definition of the Pac-Man construction, $h_{1}(u)=h+\nu(\cC_{u})$ and $F(h_{1}(u))=F(h)+ \tau_{u} \nu( \cC_{u})$. In particular, $|F(h_{1}(u))-F(h)| \leq \varepsilon$.

Now take $h' \in (h,h+\nu( \mathcal{C}_{u}))$. Since $h_{1}(u)-\nu( \cC_{u})<h'<h_{1}(u)$, by Lemma \ref{lem:target}, there  exists $y \in \cC_{u}$ such that $\pi_{h'}=y$. Then, as before,  by Lemma \ref{lem:F}, we have 
$$|F(h_{1}(u))-F(h')| \leq \tau_{y}|h_{1}(u)-h'|+\dC(u,y)\leq  \tau_{u}\nu(\cC_{u})+\varepsilon \leq 2 \varepsilon.$$
We conclude that $|F(h)-F(h')| \leq 3 \varepsilon$.

Let us next show that $F$ is left-continuous at $h$. Fix a point $v \in \Sk(\cC)$ such that  $z \prec v \prec L_{N-1}$. Take $\varepsilon>0$ and choose a point $ u \in \Sk(\cC)$  such that $z \prec u \prec v$ and  $\nu( \cC^{L_{N-1}}_{z} \backslash \cC_{u}) \leq \varepsilon/ \tau_{v}$ and $\mathsf{Diam}(\cC^{L_{N-1}}_{z} \backslash\cC_{u})<\varepsilon$. Observe that by definition of the Pac-Man construction, $h_{1}(u)+ \nu( \cC^{L_{N-1}}_{z} \backslash \cC_{u})=h$. Take $h' \in (h_{1}(u),h)$. By Lemma \ref{lem:target}, there exists $y \in \cC^{L_{N-1}}_{z} \backslash \cC_{u}$ such that $\pi_{h'}=y$. Then, as before,  by Lemma \ref{lem:F}, we have 
$$|F(h)-F(h')| \leq \tau_{z}|h-h'|+\dC(z,y)\leq  \tau_{v} \nu( \cC^{L_{N-1}}_{z} \backslash \cC_{u})  +\varepsilon \leq 2 \varepsilon.$$

 \begin{figure}[!ht] \centering
\includegraphics[scale=0.8]{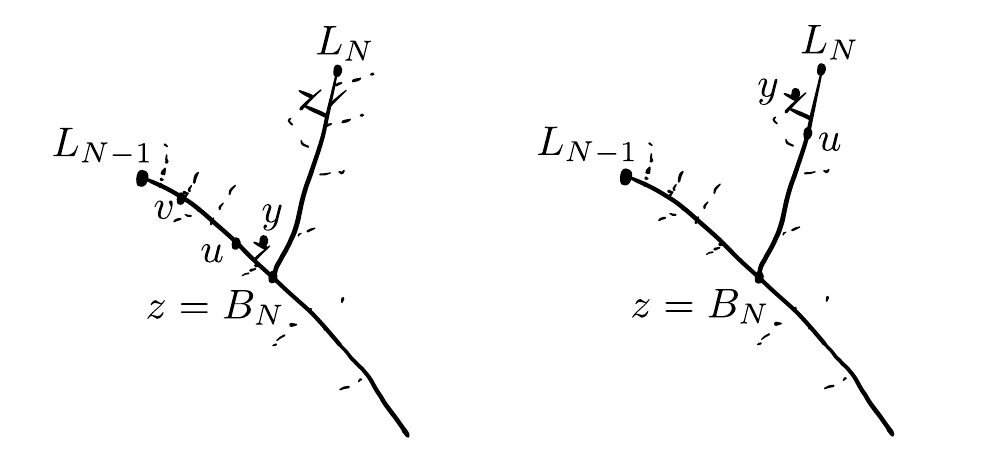}
\caption{Illustration of the case $h=h_{1}(z)$. Left: proof of the left-continuity; right: proof of the right-continuity.
}
\label{fig:case-3-1}
\end{figure}

\emph{$\star$ $\star$ Second subcase:}  $h$ is of the form $h=h_{2}(z)$ (see Fig.~\ref{fig:case-3-2}).

Let us first show that $F$ is left-continuous at $h$. Fix a point $v \in \Sk(\cC)$ such that  $z \prec v \prec L_{N}$. Take $\varepsilon>0$ and choose a point $ u \in \Sk(\cC)$ such that $z \prec u \prec v$,  $\nu( \cC^{L_{N}}_{z} \backslash \cC_{u}) \leq \varepsilon/ \tau_{v}$ and $\mathsf{Diam}(\cC^{L_{N}}_{z} \backslash\cC_{u})<\varepsilon$. Observe that by definition of the Pac-Man construction, $h_{1}(u)+ \nu( \cC^{L_{N-1}}_{z} \backslash \cC_{u})=h$. Take $h' \in (h_{1}(u),h)$. By Lemma \ref{lem:target}, there exists $y \in \cC^{L_{N-1}}_{z} \backslash \cC_{u}$ such that $\pi_{h'}=y$. Then, as before,  by Lemma \ref{lem:F}, we have 
$$|F(h)-F(h')| \leq \tau_{z}|h_{1}(z)-h'|+\dC(z,y)\leq  \tau_{v} \nu( \cC^{L_{N-1}}_{z} \backslash \cC_{u})  +\varepsilon \leq 2 \varepsilon.$$
 
 \begin{figure}[!ht] \centering
\includegraphics[scale=0.8]{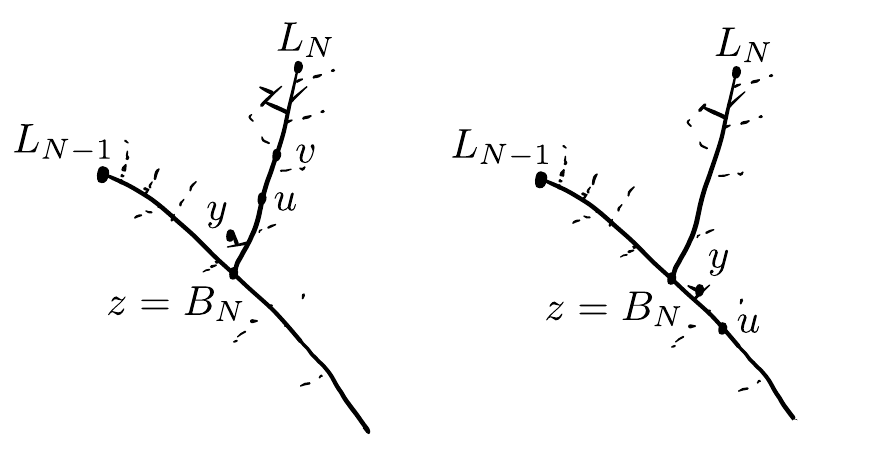}
\caption{Illustration of the case $h=h_{2}(z)$. Left: proof of the left-continuity; right: proof of the right-continuity.}
\label{fig:case-3-2}
\end{figure}

Let us next show that $F$ is right-continuous at $h$. Take $\varepsilon>0$ and fix a point $u \in \Sk(\cC)$  such that $B_{N-1} \prec u \prec z$,    $\nu( \cC_{u} \backslash \cC_{z}) \leq \varepsilon/ \tau_{z}$ and $\mathsf{Diam}(\cC_{u} \backslash\cC_{z})<\varepsilon$.  Observe that by definition of the Pac-Man construction, $h_{1}(u)=h+\nu( \cC_{u} \backslash \cC_{z}) $. Take $h' \in (h,h_{1}(u))$. By Lemma \ref{lem:target}, there exists $y \in\cC_{u} \backslash \cC_{z}$ such that $\pi_{h'}=y$. Then, as before,  by Lemma \ref{lem:F}, we have 
$$|F(h)-F(h')| \leq \tau_{u}|h-h'|+\dC(u,y)\leq  \tau_{z} \nu( \cC_{u} \backslash \cC_{z})  +\varepsilon \leq 2 \varepsilon.$$
This completes the proof.
\end{proof}

\subsection{The function $F$ codes the Aldous-Pitman fragmentation of the CRT}

We have constructed a continuous excursion-type function $F$ from the Aldous-Pitman fragmentation of the Brownian CRT $\cT$. In order to prove Theorem \ref{thm:couplage} (i), before showing that $F$ is in law the Brownian excursion, we  first show  the following result
\begin{prop}
\label{prop:tailles}
A.s. for every $t \geq 0$, the nonincreasing rearrangement of the masses of the connected components of $ \mathcal{T} \backslash \cP_{t}$ is the same as the nonincreasing rearrangement of the lengths of the excursions of $(F(h)-th)_{ 0 \leq h \leq 1}$ above its running infimum.\end{prop}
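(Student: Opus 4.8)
The plan is to exploit Lemma~\ref{lem:bijectionsubtreescomponents}, which identifies the connected components of $\cT\backslash\cP_t$ (and their masses) with the subtrees $\cutT_x^y$ of the cut-tree satisfying $\tau_x=t$ and $x\prec y$. So it suffices to show that the lengths of the excursions of $(F(h)-th)_{0\le h\le 1}$ above its running infimum are exactly the $\nu$-masses of these subtrees $\{\cutT_x^y:\tau_x=t,\ x\prec y\}$, ranged in the right way. The key is to understand, for a fixed level $t$, the structure of the set $\{h\in[0,1]:F(h)-th > \inf_{[0,h]}(F(s)-ts)\}$ of ``excursion intervals''.

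First I would analyze the ``running infimum'' process $g_t(h):=\inf_{0\le s\le h}(F(s)-ts)$. The intuition from the Pac-Man construction is that $F(h)$ is a weighted sum of masses of eaten subtrees, with weights $\tau_{B_k}$, and that $F(h)-th = \sum_k (\tau_{B_k}-t)\,m_k$ is essentially a signed sum where subtrees eaten ``before time $t$'' contribute positively and those eaten ``after'' contribute negatively. Concretely I would show: a point $h$ is a running-infimum record of $F(\cdot)-t\cdot$ precisely when $\tau_{\pi_h}\le t$ (equivalently, the final target $\pi_h$ sits at cut-time at most $t$, i.e. the Pac-Man has not yet ``entered'' any subtree that becomes a component at time $t$), and the excursion intervals correspond bijectively to maximal stretches of $h$ during which the Pac-Man is eating inside a single subtree $\cutT_x^y$ with $\tau_x=t$. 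For such a subtree, by Lemma~\ref{lem:target} the set of $h$ with $\pi_h\in\cutT_x^y$ is an interval of length $\nu(\cutT_x^y)$ (it is $(h_0(x),h_1(x)]$ or $[h_1(x),h_2(x)]$ depending on the side of the branchpoint $x$, or a union giving total mass $\nu(\cutT_x^y)$ — one must handle the two subtrees above a branchpoint $x$ with $\tau_x=t$ separately), and the key computation, via the formula \eqref{eq:FhFhprime} for $F(h)-F(h')$, is that on this interval $F(h)-th$ returns to the same value $F(h_x)-th_x$ at the left endpoint, and strictly exceeds it in between (because inside $\cutT_x^y$ all the freshly-eaten sub-subtrees have $\tau>t$, so they contribute negatively to $F-t\cdot$ relative to the starting value — wait, I need the sign to be positive; the point is that $F(h)-th = (F(h_x)-th_x) + \sum(\tau-t)m$, and the terms being summed are... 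I would check carefully that the convention makes the excursion rise above, not below, the infimum, possibly by comparing with $-(F-t\cdot)$ or noting that the relevant quantity is $F(h)-th$ minus its past infimum). The cleanest route is: for $h$ in the interval associated with a component-subtree $\cutT_x^y$ at level $t$, write $F(h)-th - (F(\underline h)-t\underline h) = \sum_{k} (\tau_{B_k^{(h)}}-t) m_k$ where the $B_k$ range over the record sequence of $\pi_h$ strictly below $y$ but at or above $x$, all of which have $\tau\ge t$ (in fact $>t$ strictly once we have entered the subtree), hence this is $\ge 0$, with equality exactly at the endpoints, giving an excursion of length $\nu(\cutT_x^y)$.

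Then I would assemble the global picture: the complement of $\bigcup\{\text{excursion intervals at level }t\}$ in $[0,1]$ is the set of $h$ with $\tau_{\pi_h}\le t$, which is where $F(\cdot)-t\cdot$ coincides with its running infimum, and this has Lebesgue measure $0$ (it is in bijection via $\pi$ with the part of $\cutT$ below level $t$, which is $\nu$-null since $\nu$ is carried by the leaves and $\tau_\ell=\infty$ for every leaf $\ell$). Therefore the excursion intervals partition $[0,1]$ up to a null set, their lengths are exactly the masses $\nu(\cutT_x^y)=\mu(\text{component})$ over all $x,y$ with $\tau_x=t$, $x\prec y$ (using mass-conservation from Lemma~\ref{lem:bijectionsubtreescomponents}), and we are done for each fixed $t$; the ``a.s.\ for every $t$'' version follows by a standard argument — prove it for a countable dense set of $t$ and use right-continuity in $t$ of both the component masses (the fragmentation is càdlàg) and the excursion lengths of $(F(h)-th)_h$ (continuity of $h\mapsto F(h)$, Proposition~\ref{prop:continuity}, plus monotonicity in $t$ of the drifted function), together with the fact that jumps of the fragmentation occur at a countable set of times that can be included in the dense set, or — more robustly — by noting that both sides, as functions of $t$, are determined by their values on a dense set by a monotonicity/continuity argument. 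I would fill in the ``for all $t$ simultaneously'' step by using that cutpoints are countable and that $\tau$ takes each value $t$ at a countable set of branchpoints a.s.

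The main obstacle I expect is the careful bookkeeping around branchpoints $x$ with $\tau_x=t$: such a branchpoint spawns \emph{two} component-subtrees $\cutT_x^{\ell(x)}$ and $\cutT_x^{\bar\ell(x)}$ (equivalently $\cutT_x^y$ and $\bar\cutT_x^y$), and I must check that the interval $(h_0(x),h_2(x)]$ of $h$'s with $\pi_h$ in $\cutT_x$ splits at $h_1(x)$ into two excursion intervals of lengths $\nu(\cutT_x^{\ell(x)})$ and $\nu(\cutT_x^{\bar\ell(x)})$, with $F(\cdot)-t\cdot$ returning to the infimum precisely at $h_1(x)$ (the moment the Pac-Man finishes the first subtree and crosses $x$). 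Getting the strict-versus-weak inequalities exactly right, handling the infinite record sequences ($N=\infty$, leaf targets) where sums become series, and verifying that the running infimum is actually \emph{attained} at the claimed record points (so that the ``excursions'' are genuine excursions above the running infimum in the precise sense used for $X_{\mathsf B}$) — these are the delicate points. The formula \eqref{eq:FhFhprime} together with Lemma~\ref{lem:target} and Lemma~\ref{lem:decreasing} should make all of this mechanical once the right statement is isolated, but isolating and proving the precise correspondence ``excursion interval $\leftrightarrow$ component-subtree $\cutT_x^y$ with $\tau_x=t$'' is where the real work lies.
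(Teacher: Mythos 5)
Your proposal follows essentially the same strategy as the paper: reduce via Lemma~\ref{lem:bijectionsubtreescomponents} to the cut-tree, use Lemma~\ref{lem:target} and the identity~\eqref{eq:FhFhprime} (packaged in the paper as Lemma~\ref{lem:Ft}) to verify that $F_t$ returns to its running infimum at the interval endpoints and is strictly above it in between, and split the analysis at a branchpoint with $\tau_x=t$ into the two sub-intervals around $h_1(x)$. The sign concern you flag resolves cleanly through Lemma~\ref{lem:Ft} (all the $\tau_{B_k}$ encountered strictly inside the subtree exceed $t=\tau_x$), and the ``a.s.\ for every $t$'' step you plan to handle by density and right-continuity is in fact free, since once the cut-tree's a.s.\ properties hold the bijection of Lemma~\ref{lem:bijectionsubtreescomponents} and the ensuing computations are deterministic and valid simultaneously for all $t>0$.
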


 To simplify notation, set $F_{t}(h)=F(h)-th$ for $0 \leq h \leq 1$.

\begin{lem}
\label{lem:Ft}
Take $x \prec y$ in $\mathcal{C}$ and $h,h' \in [0,1]$ such that $\pi_{h}=x$ and $\pi_{h'}=y$. Then $F_{\tau_{x}}(h')> F_{\tau_{x}}(h)$.
\end{lem}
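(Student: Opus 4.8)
The key is to use formula \eqref{eq:FhFhprime} from the proof of Lemma \ref{lem:F}, which gives an exact expression for $F(h)-F(h')$ rather than just a bound. With the notation of Lemma \ref{lem:h1nux} (record sequences $(B^1_i,L^1_i)$, $(B^2_i,L^2_i)$ for $x$ and $y$, and the masses $m_k = \nu(\cutT^{L^2_{k-1}}_{B^2_k})$ for $k<N_2$, $m_{N_2}$ the appropriately adjusted terminal mass), \eqref{eq:FhFhprime} reads
\[
F(h)-F(h') = \tau_x(h-h') + \sum_{N_1 \leq k < N_2+1} (\tau_x - \tau_{B^2_k}) m_k.
\]
Therefore
\[
F_{\tau_x}(h) - F_{\tau_x}(h') = F(h) - F(h') - \tau_x(h-h') = \sum_{N_1 \leq k < N_2+1} (\tau_x - \tau_{B^2_k}) m_k,
\]
and I want to show this is $<0$, i.e. $F_{\tau_x}(h') > F_{\tau_x}(h)$.

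The proof then reduces to two observations. First, since $x \prec y$ and $x = B^1_{N_1}$ is not a leaf, the sequence $(B^2_k)_{k \geq N_1}$ satisfies $x \preceq B^2_{N_1} \prec B^2_{N_1+1} \prec \cdots$, all these points lying on $\llbracket \rho, y \rrbracket$ and being strict descendants of $x$ for $k > N_1$ (and $B^2_{N_1} \in \llbracket x, L^1_{N_1-1}\llbracket$, in fact $x \prec B^2_{N_1}$ when $x \in \Sk(\cC)$; when $x$ is a branchpoint one has $B^2_{N_1} = x$ or $B^2_{N_1} \succ x$ depending on which side $y$ lies, but in all cases $x \preceq B^2_k$). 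Since $\tau$ is strictly increasing along any branch of $\cutT$ (it is the integral of a strictly positive density against the length measure), we get $\tau_{B^2_k} \geq \tau_x$ for every $k \geq N_1$, hence every summand $(\tau_x - \tau_{B^2_k})m_k$ is $\leq 0$. Second, I must exhibit at least one strictly negative term. Since $\pi_h = x \prec y = \pi_{h'}$ and $x \neq y$, the two record sequences genuinely differ past index $N_1$, so the sum over $k$ from $N_1$ to $N_2$ is nonempty and at least one $m_k$ is strictly positive; moreover for that $k$ one has $B^2_k \succ x$ strictly (the very first step past $x$ moves strictly away from $x$ toward $y$, since $y \ne x$), so $\tau_{B^2_k} > \tau_x$ strictly. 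This forces $F_{\tau_x}(h) - F_{\tau_x}(h') < 0$.

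The main obstacle — really the only delicate point — is handling the bookkeeping at the junction index $N_1$ carefully: depending on whether $x$ is a branchpoint or a skeleton point, and on whether $h \in \mathbb{h}_2$, the identification of $B^2_{N_1}$ and of the terminal mass $m_{N_2}$ requires the case analysis already set up in Lemma \ref{lem:h1nux} and its figure. One needs to check that in every case there is a genuinely positive mass $m_k$ with $B^2_k$ a strict descendant of $x$; this follows because $x \prec y$ means at least one honest Pac-Man step (eating a nontrivial subtree or terminating strictly below) occurs between reaching $x$ and reaching $y$, so $\nu(\cC_y) < \nu(\cC^{L}_x)$ strictly for the relevant subtree $\cC^L_x \ni y$. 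Once this is in place, the conclusion is immediate from the sign analysis above. I would present the argument by first recalling \eqref{eq:FhFhprime}, then stating the two sign facts as short claims, then concluding.
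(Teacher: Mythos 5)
Your proposal is correct and follows essentially the same route as the paper: both start from the exact identity \eqref{eq:FhFhprime}, observe that the difference $F_{\tau_x}(h)-F_{\tau_x}(h')$ is $\sum_{N_1\leq k<N_2+1}(\tau_x-\tau_{B^2_k})m_k$, and conclude by noting that $\tau$ is strictly increasing along branches and that at least one term with $B^2_k\succ x$ and $m_k>0$ exists (the paper phrases this as the two-case split $B^2_{N_1}=x$ versus $B^2_{N_1}\neq x$, which is exactly the junction bookkeeping you flag). The only cosmetic point is that your sketch could make the two cases at $k=N_1$ fully explicit — when $B^2_{N_1}=x$ the term at $k=N_1$ vanishes and one uses $N_2>N_1$ to get a strictly negative term at $k=N_1+1$, while when $B^2_{N_1}\neq x$ the term at $k=N_1$ is already strictly negative — but you clearly identify this as the delicate point and the sign facts you invoke are all correct.
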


\begin{proof}
This readily follows from the  identity \eqref{eq:FhFhprime} appearing in the proof of Lemma \ref{lem:F}. Indeed, keeping the same notation, if $x=B^{1}_{N_{1}}=B^{2}_{N_{1}}$ then $N_{2}>N_{1}$ and $\tau_{x}<\tau_{B^{2}_{k}}$  for every $N_{1} < k < N_{2}+1$ since $x$ is an ancestor of $B^{2}_{k}$. If $x=B^{1}_{N_{1}} \neq B^{2}_{N_{1}}$ then similarly $\tau_{x}<\tau_{B^{2}_{N_{1}}}$.
\end{proof}

\begin{proof}[Proof of Proposition \ref{prop:tailles}.] 
By Lemma \ref{lem:bijectionsubtreescomponents}, for every $t \geq 0$, the connected components of $ \mathcal{T} \backslash \cP_{t}$ are in bijection with subtrees of $ \mathcal{C}$ of the form $\mathcal{C}_{x}^{\ell(x)}$ or $ \mathcal{C}_{x}^{\bar{\ell}(x)}$ for $x \in \mathcal{C}$ with $\tau_{x}=t$; and this bijection conserves the masses. If $C$ is a connected component, recall that we denote by $\Phi(C)$ the corresponding subtree of  $\mathcal{C}$ (in particular $\nu(\Phi(C))=\mu(C)$). 

Let $C$ be a connected component of  $ \mathcal{T} \backslash \cP_{t}$ and $x \in \mathcal{C}$ with $\tau_{x}=t$. Observe that then $x$ is not a leaf; we denote by $(B_{i},L_{i})_{0 \leq i \leq N}$ the record sequence of $x$.  Recall from \eqref{eq:defh0} the definition of $h_{0}(x)$. 

We claim that $ F_{t}(h_{0}(x))=F_{t}(h_{1}(x))$ and that
\begin{equation}
\label{eq:claims1}\forall h \in (h_{0}(x),h_{1}(x)), \quad F_{t}(h)>F_{t}(h_{1}(x)); \qquad  \forall h \in (0,h_{0}(x)),  \quad F_{t}(h)> F_{t}(h_{1}(x)).
\end{equation}
This implies that when $x \in \Sk(\cC)$, we have an excursion of length $\nu( \mathcal{C}_{x})$ of $F_{t}$ above its running infimum, and when $x$ is a branchpoint we have an excursion of length $\nu( \cC_{x}^{\ell(x)})$ of $F_{t}$ above its running infimum.

To check that  $ F_{t}(h_{0}(x))=F_{t}(h_{1}(x))$, observe that by definition
$$x=B_{N}, \quad t=\tau_{B_{N}}, \quad h_{1}(x)=\sum_{1 \leq k  \leq N} \nu \left(\cutT_{B_{k}}^{L_{k-1}}\right), \quad F(h_{1}(x))= \sum_{1 \leq  k \leq N } \tau_{B_{k}} \cdot \nu \left(\cutT_{B_{k}}^{L_{k-1}}\right),$$
so
$$F_{t}(h_{1}(x))=\sum_{1 \leq  k \leq N } \tau_{B_{k}} \cdot \nu \left(\cutT_{B_{k}}^{L_{k-1}}\right) - \tau_{B_{N}}  \sum_{1 \leq  k \leq N }  \nu \left(\cutT_{B_{k}}^{L_{k-1}}\right).$$
In addition, observe that
$$h_0(x)= \sum_{1 \leq  k \leq N-1 }  \nu \left(\cutT_{B_{k}}^{L_{k-1}}\right)=h_1(B_{N-1}).$$
It follows that $\pi_{h_0(x)}=B_{N-1}$, and
$$
F(h_1(B_{N-1}))= \sum_{1 \leq  k \leq N-1 } \tau_{B_{k}} \cdot \nu \left(\cutT_{B_{k}}^{L_{k-1}}\right).
$$
Hence
$$F_{t}(h_{0}(x))=F_{t}(h_{1}(B_{N-1}))=\sum_{1 \leq  k \leq N-1 } \tau_{B_{k}} \cdot \nu \left(\cutT_{B_{k}}^{L_{k-1}}\right) - \tau_{B_{N}}  \sum_{1 \leq  k \leq N-1 }  \nu \left(\cutT_{B_{k}}^{L_{k-1}}\right) = F_{t}(h_1(x)).$$

The first inequality in \eqref{eq:claims1} readily follows  from Lemma \ref{lem:Ft}, since the final target point $\pi_h$ of any element $h \in (h_{0}(x),h_{1}(x))$ belongs to $\cutT_{x} \backslash \{x\} $ by Lemma \ref{lem:target}.

To establish  the second inequality in \eqref{eq:claims1}, take   $h \in (0,h_{0}(x))$.  Let $1 \leq m \leq N-1$ be such that 
$$\sum_{1 \leq  k \leq m-1 }  \nu \left(\cutT_{B_{k}}^{L_{k-1}}\right) \leq h < \sum_{1 \leq  k \leq m }  \nu \left(\cutT_{B_{k}}^{L_{k-1}}\right)=h_{1}(B_{m}).$$
Then $\pi_{h} \in \cutT_{B_{m}}^{L_{m-1}}$, so that by Lemma \ref{lem:Ft} we have $F_{\tau_{B_{m}}}(h) \geq F_{\tau_{B_{m}}}(h_{1}(B_m))$  (with the inequality being strict if $\pi_{h} \neq B_m$). Since $t \geq \tau_{B_{m}}$, this entails $F_{t}(h) \geq F_{t}(h_{1}(B_{m}))$. It remains to note that
$$ F_{t}(h_{1}(B_{m})) \geq F_{t}(h_{1}(x))$$
with the inequality being strict if $m<N-1$.
Indeed,
$$F_{t}(h_{1}(B_{m}))=\sum_{1 \leq  k \leq m } \tau_{B_{k}} \cdot \nu \left(\cutT_{B_{k}}^{L_{k-1}}\right) - \tau_{B_{N}}  \sum_{1 \leq  k \leq m }  \nu \left(\cutT_{B_{k}}^{L_{k-1}}\right)$$
so
$$ F_{t}(h_{1}(B_{m})) - F_{t}(h_{1}(x))= \tau_{B_{N}}  \sum_{m+1 \leq  k \leq N-1 }  \nu \left(\cutT_{B_{k}}^{L_{k-1}}\right)-\sum_{m+1 \leq  k \leq N-1 } \tau_{B_{k}} \cdot \nu \left(\cutT_{B_{k}}^{L_{k-1}}\right),$$
which entails the result since $\tau_{B_{N}} > \tau_{B_{k}}$ for $m+1 \leq k \leq N-1$. Since one of the two inequalities $F_{t}(h) \geq F_{t}(h_{1}(B_{m}))$ or $ F_{t}(h_{1}(B_{m})) \geq F_{t}(h_{1}(x))$ is strict (because $h<h_{0}(x)$ so $\pi_{h} \neq B_{N-1}$), the second inequality in \eqref{eq:claims1} follows.

To finish the proof, assume that $x$ is a branchpoint.  We claim that $F_{t}(h_{1}(x))=F_{t}(h_{2}(x))$ and that
\begin{equation}
\label{eq:claims2} \forall h \in (h_{1}(x),h_{2}(x)), \qquad  F_{t}(h)> F_{t}(h_{1}(x)).
\end{equation}
This implies that  we have an excursion of length $\nu\big( \cC_{x}^{\bar{\ell}(x)}\big)$ of $F_{t}$ above its running infimum.

The fact that $F_{t}(h_{1}(x))=F_{t}(h_{2}(x))$ is proved exactly in the same way as the identity $ F_{t}(h_{0}(x))=F_{t}(h_{1}(x))$, by using the definition of $F_{t}$. 
Finally, \eqref{eq:claims2}  follows from Lemma \ref{lem:Ft} by observing that when $h \in (h_{1}(x),h_{2}(x))$, $\pi_{h}$ belongs to $ \mathcal{C}_{x} \backslash \{x\} $.
\end{proof}

We establish the following result for future use.

\begin{lem}
\label{lem:measure}
The probability measure $\nu$ on $\cC$ is the push-forward of the Lebesgue measure on $[0,1]$ by $h \mapsto \pi_h$.
\end{lem}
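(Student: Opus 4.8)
The plan is to show that for every measurable $A \subseteq \cC$, the Lebesgue measure of $\{h \in [0,1] : \pi_h \in A\}$ equals $\nu(A)$. By a monotone class / $\pi$-$\lambda$ argument, it suffices to check this on a generating family of sets, and the natural choice is the collection of subtrees of the form $\cC_x$ for $x \in \Sk(\cC)$ together with $\cC_x^{\ell(x)}$ and $\cC_x^{\bar\ell(x)}$ for $x \in \cB(\cC)$ — since $\cC$ is a Brownian CRT, these (together with finite intersections, which are again of this form or empty up to $\nu$-null sets) generate the Borel $\sigma$-algebra and determine a measure. For such sets the needed identity is exactly Lemma~\ref{lem:target}: if $x \in \Sk(\cC)$ then $\pi_h \in \cC_x$ if and only if $h \in (h_0(x), h_1(x)]$, an interval of length $h_1(x) - h_0(x) = \nu(\cC_x)$; and if $x \in \cB(\cC)$ then $\pi_h \in \cC_x^{\ell(x)}$ iff $h \in (h_0(x), h_1(x)]$, of length $h_1(x) - h_0(x) = \nu(\cC_x^{\ell(x)})$ by the definition \eqref{eq:defh0}, while $\pi_h \in \cC_x^{\bar\ell(x)}$ iff $h \in [h_1(x), h_2(x)]$ up to a single point, of length $h_2(x) - h_1(x) = \nu(\cC_x^{\bar\ell(x)})$ by the remark following \eqref{eq:defh2}.

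First I would record, as a preliminary, that the map $h \mapsto \pi_h$ is surjective onto $\cC$: indeed, by Lemma~\ref{lem:cvx} and the discussion after \eqref{eq:defh1}, taking $h = h_1(x)$ in the Pac-Man construction yields $\pi_{h_1(x)} = x$ for every $x \in \cC$ (leaf, skeleton point, or branchpoint), so $\pi$ hits every point. This guarantees that the pushforward is well-defined and that the "conversely" halves of Lemma~\ref{lem:target} are available. Then I would set $\varrho$ to be the pushforward of Lebesgue measure on $[0,1]$ by $h \mapsto \pi_h$; this is a probability measure on $\cC$ since Lebesgue measure on $[0,1]$ is a probability measure. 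The goal is $\varrho = \nu$.

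Next I would argue that $\varrho$ and $\nu$ agree on the $\pi$-system $\cS$ consisting of $\cC$, the empty set, and all sets $\cC_x^y$ with $x \prec y$, $x$ not a leaf (equivalently all $\cC_x$ for $x \in \Sk(\cC)$ and all $\cC_x^{\ell(x)}, \cC_x^{\bar\ell(x)}$ for $x \in \cB(\cC)$). One checks $\cS$ is a $\pi$-system: the intersection of two subtrees of this form is again such a subtree, or a point (hence $\nu$-null and, one must check, $\varrho$-null), or empty. The agreement on each member of $\cS$ is the computation in the first paragraph. Since $\cC$ is an $\R$-tree and (being a Brownian CRT) its Borel $\sigma$-algebra is generated by $\cS$ — every ball, indeed every open set, is a countable union of such subtrees minus countably many points — Dynkin's $\pi$-$\lambda$ theorem gives $\varrho = \nu$ on all Borel sets.

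The main obstacle I anticipate is handling the boundary/endpoint subtleties: the intervals of $h$ producing a given subtree are half-open or differ by endpoints across the two cases, and one must also verify that single points $\{x\}$ and the countably many branchpoints carry no $\varrho$-mass, i.e. that $\mathrm{Leb}(\{h : \pi_h = x\}) = 0$ for each fixed $x$. For $x$ a leaf this holds because $h_1(x)$ is the unique such $h$ (by Lemma~\ref{lem:cvx} the record sequence is infinite and $h_0(x)$ is not defined / the ``interval'' degenerates); for $x$ in the skeleton or a branchpoint the relevant set of $h$ is $\{h_1(x)\}$ or $\{h_1(x), h_2(x)\}$, again Lebesgue-null. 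Assembling these endpoint bookkeeping facts cleanly, and making precise the claim that $\cS$ generates the Borel $\sigma$-algebra of the CRT, is where the real care is needed; the measure-theoretic skeleton of the argument is routine once Lemma~\ref{lem:target} is in hand.
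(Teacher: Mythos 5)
Your proof is correct and follows essentially the same route as the paper's: identify a generating $\pi$-system of subtrees, use Lemma~\ref{lem:target} to compute the Lebesgue measure of the preimage $\{h : \pi_h \in \cC_x\}$ as $h_1(x)-h_0(x)=\nu(\cC_x)$, and conclude by uniqueness of measures agreeing on a $\pi$-system. The paper is somewhat leaner: it takes only $\{\cC_x : x\in\Sk(\cC)\}$ as the generating $\pi$-system and appeals to the fact that both $\nu$ and the pushforward are supported on leaves, which makes the branchpoint-subtree cases and the $\varrho$-nullity of singletons that you track superfluous.
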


\begin{proof}
Recall that for $x \in \cC$  the tree $\cC_{x}$  is the set of all (weak) descendents of $x$ in $\cC$. First, observe that $ \{ \cC_{x} :x \in \Sk(\cC) \} $ is a generating $\pi$-system of $ \cC$. Hence, if two probability measures $\nu$ and $\tilde{\nu}$ supported on the set of leaves of $\cC$ satisfy
$$\forall x \in \Sk\left(\cC\right), \qquad \nu \left( \cC_{x}\right)=\tilde{\nu} \left( \cC_{x}\right),$$
then $\nu=\tilde{\nu}$.

Now, let $\tilde{\nu}$ be the push-forward of the Lebesgue measure $Leb$ on $[0,1]$ by $\pi$. For $x \in \Sk(\cC)$, keeping the notation of \eqref{eq:defh0}, by Lemma \ref{lem:target} we have
$$\tilde{\nu} \left( \cC_{x}\right)=Leb \{ x \in [0,1]: \pi_{x} \in \cC_{x}\}=Leb([h_{0}(x),h_{1}(x)])=\nu(\cC_{x}),$$
and the desired result follows.
\end{proof}

\begin{rk}
\label{rk:rem1}
By \cite[Proposition 12]{ADG19}, $\{\rho\} \cup \N$ are i.i.d.~with law $\nu$ and conditionally  given $(\cC, \nu,\N)$, for $b \in \cB(\cC)$,  $\bar{\ell}(b)$ has law $\nu_{\cC_{b}^{\bar{\ell}(b)}}$ and these random variables are independent. Indeed, in the notation of the latter reference, we have $Z^{y}_{b}=\bar{\ell}(b)$ for $y \in \cC_{b}^{\bar{\ell}(b)}$ since by definition $\bar{\ell}(b)$ is the image of $b$ in $\nu_{\cC_{b}^{\bar{\ell}(b)}}$.

\end{rk}

\subsection{The function $F$ is a Brownian excursion}
\label{ssec:exc}

In order to prove that $F$ is distributed as a Brownian excursion $\be$, we show that both $F$ and $\be$ satisfy a same recursive equation, which has a unique solution in distribution. 

For every continuous function $f: [0,1] \rightarrow \R_{+}$, define $(P_t(f), t \geq 0)$ as follows:
$$\forall t>0, \qquad P_t(f) = \inf \left\{ u>0, f(u)-tu=0 \right\}.$$

\begin{prop}
\label{prop:charaterization}
Let $f: [0,1] \rightarrow \R_+$ be a (random) continuous function satisfying the following:
\begin{itemize}[noitemsep,nolistsep]
\item[(i)]  $f(0)=0$ a.s.
\item[(ii)] The two processes $(P_t(f))_{t \geq 0}$ and $ \left( \frac{1}{1+S_t} \right)_{t \geq 0}$ have the same law, where  $S$ is a $1/2$-stable subordinator with Laplace exponent $\Phi(\lambda)=(2\lambda)^{1/2}$.
\item[(iii)] Define $(t_i,  a_i, b_i)_{i \geq 1}$ as follows: $\{ t_i, i \geq 1 \}$ is the set of jump times of $(P_t(f), t \geq 0)$ and, for all $i \geq 1$, $(a_i, b_i) = \left( P_{t_i}(f), P_{t_i-}(f) \right)$. Furthermore, the $t_i$'s are sorted in nonincreasing values of $b_i-a_i$. Then, conditionally given $(P_t(f))_{t \geq 0}$, $\left\{ \left((b_i-a_i)^{-1/2} \left( f\left(a_i + (b_i-a_i)u\right)-t_i\left( a_i+(b_i-a_i) u \right) \right)\right)_{0 \leq u \leq 1} \right\}_{i \geq 1}$ are i.i.d. random variables distributed as $(f(u), 0 \leq u \leq 1)$.
\end{itemize}

Then $f$ and $\be$ have the same law.\end{prop}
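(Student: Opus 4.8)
The plan is to establish uniqueness in law of the solutions of the recursive system (i)--(iii), and then to check that $\be$ itself is one of them; the two facts together give $f\overset{(d)}{=}\be$.

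\emph{That $\be$ satisfies (i)--(iii).} Property (i) is clear. For (ii), I would invoke Bertoin's identification of the first fragment: $P_t(\be)$ is the mass of the component of the root in the Aldous--Pitman fragmentation of the CRT coded by $2\be$, and this process has the law of $(1/(1+S_t))_{t\ge 0}$ with $S$ a $1/2$-stable subordinator of exponent $(2\lambda)^{1/2}$ (see \cite{AP98,Ber00}). For (iii), I would transport, through Bertoin's correspondence, the branching structure of the Aldous--Pitman fragmentation: a component of mass $m$ detached from the CRT is, conditionally on $m$, an independent Brownian CRT scaled by $\sqrt m$ carrying an independent rescaled Poissonian rain, so that on the corresponding interval $(a_i,b_i]$ the function $\be_\cdot - t_i\,\cdot$, after subtraction of the affine part, rescaling of the interval to $[0,1]$ and rescaling of the height by $(b_i-a_i)^{-1/2}$, is a standard excursion; that these are jointly i.i.d.\ and independent of $(P_t(\be))_t$ follows from the branching property of the fragmentation together with the fact that $(P_t(\be))_t$ is a measurable function of the masses and detachment times of the components that separate from the root component.

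\emph{That (i)--(iii) determine the law of $f$.} The data $(t_i,a_i,b_i)_{i\ge 1}$ is a deterministic measurable functional of the path $(P_t(f))_t$, whose law is fixed by (ii), and for $u\in(a_i,b_i]$, writing $u=a_i+(b_i-a_i)v$,
\[
f(u)=t_i\,u+(b_i-a_i)^{1/2}\,g_i(v),
\]
where by (iii) the $g_i$ are i.i.d.\ copies of $f$, independent of $(P_t(f))_t$; moreover $\bigcup_i(a_i,b_i]$ has full Lebesgue measure in $[0,1]$ since $P_t(f)\to 0$, and $f(0)=0$. I would iterate this identity inside each $g_i$, then inside their pieces, and so on: after $n$ iterations one obtains, on each subinterval of a countable partition of $(0,1]$ indexed by the depth-$n$ nodes of an infinitary tree, a representation $f=A_n(\cdot)+\mathrm{scale}_n^{1/2}\cdot(\text{independent copy of }f)$, where $A_n$ is affine and $\mathrm{scale}_n$ is a product of $n$ relative masses. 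Crucially, each edge of this infinitary tree carries an independent process of law $1/(1+S_\cdot)$ (because every copy of $f$ again satisfies (ii)) with a deterministic branching rule, so the whole law of the tree, and hence of $A_n$, is determined by (ii) and (iii). For fixed $u$, $\mathrm{scale}_n(u)\to 0$ almost surely (dissipativity of the tagged fragment, which I would deduce from the $1/2$-stable subordinator in (ii)); since the copy of $f$ appearing at level $n$ has the fixed law of $f$, the remainder term tends to $0$ in probability, so $f(u)=\lim_n A_n(u)$ in probability. As the joint law of $\big(A_n(u_1),\dots,A_n(u_k)\big)$ is determined, so is that of $\big(f(u_1),\dots,f(u_k)\big)$: all finite-dimensional distributions of $f$ are determined, and since $f$ is continuous, so is its law on $C([0,1])$.

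\emph{Main obstacle.} The delicate part is the verification of (iii) for $\be$ in the stated form, namely that \emph{conditionally on the entire process} $(P_t(\be))_t$ the rescaled excursions of $\be_\cdot - t\,\cdot$ above its running infimum, other than the first, are i.i.d.\ standard excursions: this joint-in-$t$ independence, although morally the branching property of the Aldous--Pitman fragmentation seen through Bertoin's representation, needs care to be made rigorous (one must disentangle the excursions of the drifted excursion simultaneously over all drifts). The remaining point, $\mathrm{scale}_n(u)\to 0$ almost surely, is a routine consequence of the subordinator in (ii), and the rest---propagating the affine parts through the iteration and using that finite-dimensional marginals together with continuity determine a law on $C([0,1])$---is bookkeeping.
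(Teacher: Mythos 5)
Your overall strategy matches the paper's: verify that $\be$ satisfies (i)--(iii), then show these properties characterize the law via the recursive structure. On both halves, though, your execution differs enough from the paper's to be worth comparing.

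For the verification that $\be$ satisfies (iii) (done in the paper in Lemma~\ref{lem:excbrowprop}, stated separately), the paper invokes Chassaing--Janson's Theorem~2.6, identifying $\Psi_a\be$ with a recentred reflected Brownian bridge conditioned on its local time at $0$, and reads off the conditional i.i.d.\ rescaled excursions from that picture. You propose instead to transport the branching property of the Aldous--Pitman fragmentation through Bertoin's correspondence. That route is morally right, and you correctly flag the delicate point: making rigorous the joint-in-$t$ conditional independence given the \emph{entire} process $(P_t)_t$, when the excursions of $\be_\cdot - t\cdot$ refine one another as $t$ grows. The Chassaing--Janson route largely sidesteps that difficulty by working directly in the excursion picture.

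For uniqueness, the paper builds an explicit coupling of $f$ and $\be$ on the same infinitary tree of $1/2$-stable subordinators $(S^{\mathbf{s}})_{\mathbf{s}\in\mathcal{U}}$, observes that the two functions agree exactly at all interval endpoints $\mathcal{R}_k$, and then combines uniform continuity of both functions with the fact that the maximal gap in $\mathcal{R}_{K_\epsilon}$ is $<\epsilon$ with high probability. You instead determine the finite-dimensional marginals of $f$ directly, writing $f(u)=A_n(u)+\mathrm{scale}_n(u)^{1/2}\cdot g_n$ with $A_n$ a determined-law functional of the subordinator tree and the remainder vanishing in probability, then invoking continuity to pass from fidi laws to the law on $C([0,1])$. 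The two arguments are close in spirit---both express $f$ as a determined functional of the subordinator hierarchy plus a vanishing remainder---but yours avoids the coupling and the modulus-of-continuity estimate, which is arguably more elementary. One caveat: you justify $\mathrm{scale}_n(u)\to 0$ for fixed $u$ by ``dissipativity of the tagged fragment,'' but that statement is usually proved for a \emph{size-biased} (uniformly random) tag, not a deterministic $u$, so a bit more care is needed. The cleanest route is the uniform statement the paper uses, namely that the maximal gap in $\mathcal{R}_k$ tends to $0$ a.s.\ (equivalently, $\bigcup_k\mathcal{R}_k$ is a.s.\ dense); that in particular implies your pointwise claim, and it is exactly what the paper asserts when it chooses $K_\epsilon$. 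With that supplied, your remaining steps (Cauchy-in-probability of the $A_n(u)$, joint convergence for tuples, continuity $\Rightarrow$ uniqueness on $C([0,1])$) are routine and correct.
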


In particular, observe that if $f$ satisfies these assumptions, then $f(1)=0$ a.s.

\begin{lem}
\label{lem:excbrowprop}
The standard Brownian excursion $\be$ on $[0,1]$ satisfies (i), (ii), (iii).
\end{lem}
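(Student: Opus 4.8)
\textbf{Proof plan for Lemma \ref{lem:excbrowprop}.}

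The plan is to verify the three properties (i), (ii), (iii) directly, using well-known path decompositions of the Brownian excursion. Property (i) is immediate: by definition a standard Brownian excursion on $[0,1]$ satisfies $\be(0)=\be(1)=0$ almost surely. For property (ii), the key observation is that $P_t(\be) = \inf\{u>0 : \be_u - tu = 0\}$ is precisely the first time the excursion is hit from above by the line of slope $t$ through the origin; equivalently, writing things in terms of the running infimum of $s \mapsto \be_s - ts$, the point $P_t(\be)$ is the first positive zero of the drifted excursion, so that $1/P_t(\be) - 1$ should be identified with a $1/2$-stable subordinator evaluated at time $t$. I would establish this by recalling Bertoin's analysis in \cite{Ber00}: the lengths of excursions above the running infimum of $(\be_s - ts)_{s \in [0,1]}$ evolve as the standard additive coalescent (run backwards), and the first such excursion — the one containing $0$ — has length $P_t(\be)$. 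The statement that $(1/P_t(\be))_{t\ge 0}$ equals in law $(1 + S_t)_{t \ge 0}$ for a $1/2$-stable subordinator with Laplace exponent $(2\lambda)^{1/2}$ is then read off from the explicit one-dimensional marginals and independent-increments structure computed in \cite{Ber00} (alternatively via the description of the fragmentation of the CRT at its root, cf.\ \cite{AP98}); I would cite rather than reprove this identity in law, since it is exactly Bertoin's theorem.

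For property (iii), the heart of the matter is a self-similarity / branching property of the Brownian excursion under the drift operation. Fix $t>0$. Conditionally on the partition of $[0,1]$ into the excursion intervals $(a_i, b_i)$ of $(\be_s - ts)_s$ above its running infimum (together with the zero set, which has Lebesgue measure zero), the restrictions of $\be_s - ts - \inf_{[0,s]}(\be - t\,\mathrm{id})$ to these intervals are, after the Brownian scaling $u \mapsto (b_i - a_i)^{-1/2}(\be(a_i + (b_i-a_i)u) - t(a_i + (b_i-a_i)u))$, i.i.d.\ standard Brownian excursions on $[0,1]$ — and crucially, since the subtracted running infimum is constant on each excursion interval (equal to its value at $a_i$), this rescaled object is exactly the quantity in (iii) up to that additive constant. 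To make the constant disappear one uses that within an excursion interval the process starts and ends at the running infimum, so after rescaling it is an excursion from $0$; the additive shift is absorbed. The decomposition of a Brownian excursion (more precisely, of a Brownian bridge or excursion with drift) into i.i.d.\ rescaled excursions above its infimum, indexed by the excursion intervals and independent of the interval structure, is a standard consequence of the Markov property and Brownian scaling — it is the continuum analogue of the Vervaat-type / cycle-lemma decomposition and is exactly the structural input Bertoin uses to identify $X_{\mathsf B}$ with a fragmentation; I would invoke it with a reference to \cite{Ber00} (and to the excursion-theory background in \cite{Ber06}). The jump times $t_i$ of $(P_t(\be))_t$ correspond to the times at which the root-excursion splits, and the new pieces are exactly the $(a_i, b_i)$; re-indexing by nonincreasing length is a measurable relabelling that does not affect the conditional i.i.d.\ claim.

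The main obstacle I anticipate is the bookkeeping in (iii): one must check carefully that the object $(b_i - a_i)^{-1/2}(\be(a_i + (b_i-a_i)u) - t_i(a_i + (b_i - a_i)u))$ — note the affine term is $t_i$ times the \emph{global} argument $a_i + (b_i-a_i)u$, not $t_i$ times $u$ — really is, after accounting for the running-infimum shift that is implicitly constant on $(a_i,b_i)$, a standard Brownian excursion independent of the partition data. This amounts to verifying that the constant $t_i a_i - \inf_{[0,a_i]}(\be - t_i\,\mathrm{id})$ equals $\be(a_i) - t_i a_i$ precisely because $a_i$ is a point of the running infimum, so the affine and shift terms conspire to leave a genuine excursion from $0$; once this identity is in hand, the i.i.d.\ conclusion is the cited excursion decomposition. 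Everything else is a matter of quoting \cite{Ber00} (and \cite{AP98, Ber06}) for the identities in law, so no genuinely new computation is required.
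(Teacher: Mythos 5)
Your treatment of (i) is fine, and for (ii) you take essentially the same route as the paper (cite \cite{Ber00}). The serious issue is in (iii).

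The linchpin of your argument for (iii) is the assertion that the excursions of $(\be_s - ts)_s$ above its running infimum are, conditionally on the interval structure, i.i.d.\ rescaled standard Brownian excursions, and that this ``is a standard consequence of the Markov property and Brownian scaling.'' That assertion is where the actual mathematical content lives, and it is not standard for the conditioned process $\be$. The Brownian \emph{excursion} does not enjoy the time-homogeneous strong Markov property of Brownian motion; the conditioning $\be(1)=0$ (and positivity) creates long-range dependencies, so you cannot simply cut at a stopping time and restart. This is precisely why the paper invokes a nontrivial theorem of Chassaing and Janson \cite{CJ01}: the Vervaat-type shift $\Psi_a\be$ of the drifted excursion is equal in law to a cyclically shifted reflected Brownian bridge $X_a$ conditioned on its local time at $0$, and it is for $X_a$ (not directly for the drifted excursion) that the conditional i.i.d.\ excursion decomposition is clean, via Itô excursion theory for the reflected bridge (the paper points to the proof of \cite[Lemma 12]{Pit99}). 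Your proposal never mentions Chassaing--Janson or the detour through the reflected bridge; citing \cite{Ber00, Ber06} for this specific conditional-i.i.d.\ structure is not supported, since the paper itself uses \cite{Ber00} only for the marginal/subordinator statement in (ii) and has to reach for a different result to get (iii).

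A secondary issue: your framing of (iii) starts by fixing $t>0$ and looking at the excursion intervals of $(\be_s - ts)_s$ at that one drift, but the $(a_i,b_i)$ in (iii) are indexed by the \emph{jump times} of $(P_t(f))_{t\ge 0}$ — each $(a_i,b_i)=(P_{t_i},P_{t_i-})$ is the piece that detaches from the root fragment at time $t_i$, and the $t_i$ vary. These intervals are not the excursion intervals at a fixed $t$ (those nest inside the $(a_i,b_i)$ and refine further as $t$ grows), and the conditioning in (iii) is on the whole process $(P_t(f))_{t\ge 0}$, not on a single time slice. You do gesture at this near the end (``the new pieces are exactly the $(a_i,b_i)$''), but passing from a fixed-$t$ i.i.d.\ statement to the jump-time version requires an additional recursive/consistency argument that your proposal does not supply. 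Your ``bookkeeping'' worry about the affine term is in fact resolved correctly (indeed $\be(a_i)=t_i a_i$ and $\be(b_i)=t_i b_i$ since $a_i=P_{t_i}(\be)$ and $b_i=P_{t_i-}(\be)$), so that part is fine; the gap is the missing Chassaing--Janson input.
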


In order to prove that $\be$ satisfies these properties, we need the following result from Chassaing and Janson \cite{CJ01}.

\begin{thm}{\cite[Theorem $2.6$]{CJ01}}
Consider $b$ and $\be$, respectively a Brownian bridge and a Brownian excursion on $[0,1]$, extended on $\R$ so that they are $1$-periodic.
For all $a \geq 0$, define the following two processes:
\begin{itemize}[noitemsep,nolistsep]
\item $X_a$ the reflected Brownian bridge $|b|$ conditioned to have local time at time $1$ at $0$ equal to $a$;
\item $Z_a = \Psi_a \be$, where $\Psi_a f(t) = f(t)-at-\inf_{-\infty<s \leq t} \{f(s)-as \}$.
\end{itemize}
Denote by $L_t(X_a)$ the local time of $X_a$ up to time $t$, and $V \in [0,1] $ the unique point at which $t \mapsto L_t(X_a)-at$ is maximum. Then,
\begin{align*}
Z_a \overset{(d)}{=} X_a(V+\cdot). 
\end{align*}
\end{thm}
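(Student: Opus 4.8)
The plan is to reduce both sides of the identity to statements about a single standard Brownian bridge, using Vervaat's transform on the left and L\'evy's identity on the right, and then to match them. (We may assume $a>0$; for $a=0$ the statement is the trivial identity $\be=\be$, since $X_0$ is the Brownian excursion and $Z_0=\Psi_0\be=\be$.)

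First I would rewrite the left-hand side in a usable form. Since $\be\ge 0$ with $\be(0)=0$, for every $t\in[0,1]$ the infimum in the definition of $\Psi_a$ is attained on $[0,t]$: on $(-\infty,0]$ one has $\be(s)-as=\be(\{s\})+a|s|\ge 0=\be(0)$. Hence, writing $g:=\be-a\,\mathrm{id}$ and $\mathsf{R}(f)_t:=f_t-\min_{[0,t]}f$,
\[
Z_a=\Psi_a\be=\mathsf{R}(g)\qquad\textrm{on }[0,1],
\]
so $Z_a$ is simply the drifted Brownian excursion $g$ reflected above its running minimum. Because $\be(s)\ge 0\ge a(s-1)$ on $[0,1]$ one has $\min_{[0,1]}g=-a$, attained at $1$; thus $Z_a(0)=Z_a(1)=0$ and $Z_a$ accumulates exactly $a$ units of local time at $0$, the same total as $X_a$.

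Next I would present both sides as re-rooted, reflected drifted bridges. On the left, Vervaat's theorem gives $\be=b(m+\cdot)-b(m)$ with $b$ a standard Brownian bridge (extended $1$-periodically) and $m=\arg\min b$; substituting this into the previous display and changing variables yields
\[
Z_a(t)=k(m+t)-\min_{s\in[m,m+t]}k(s),\qquad k:=b-a\,\mathrm{id},
\]
i.e.\ $Z_a$ is obtained from the drifted bridge $k$ by re-rooting at $m$ and reflecting above the running minimum measured from the re-rooting point; since $b=k+a\,\mathrm{id}$, that point is $\arg\min_{[0,1]}(k+a\,\mathrm{id})$. On the right, a bridge version of L\'evy's identity realises the pair $(X_a,L(X_a))$ in law as $(\mathsf{R}(P),-\min_{[0,\cdot]}P)$ for $P$ a Brownian bridge from $0$ to $-a$ conditioned to stay above $-a$, the conditioning ``$L_1(X_a)=a$'' becoming ``$\min_{[0,1]}P=-a$''. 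Since $t\mapsto L_t(X_a)$ is flat on each excursion interval of $X_a$, the maximiser $V$ of $L_t(X_a)-at$ lies in the zero set of $X_a$ and, transported through L\'evy's identity, equals $\arg\min_{[0,1]}(P+a\,\mathrm{id})$; a short computation (including the term that wraps around the period) then shows $X_a(V+\cdot)=\mathsf{R}\big(P(V+\cdot)-P(V)\big)$. Thus both $Z_a$ and $X_a(V+\cdot)$ have the form: a drifted bridge with period-drift $-a$, re-rooted at the period-minimiser of [bridge $+\,a\,\mathrm{id}$], then reflected above its running minimum from the re-rooting point.

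The hard part will be the final step: showing these two constructions coincide in law, i.e.\ that the reflection operator $\mathsf{R}$ erases the difference between the two underlying drifted bridges --- the unconditioned $k=b-a\,\mathrm{id}$ on the left and the min-conditioned $P$ on the right. Unwinding the re-rooting once more via Vervaat, each re-rooted bridge is of the form $\phi-a\,\mathrm{id}$ with $\phi$ a Brownian-excursion-type path ($\phi=\be$ on the left; on the right $\phi$ carries an extra conditioning inherited from the min-conditioning of $P$, one that should turn out to be immaterial after applying $\mathsf{R}$), so the theorem amounts to the statement that $\mathsf{R}$ sends $\be-a\,\mathrm{id}$ and its conditioned counterpart to the same law --- precisely the interplay between the linear tilt, the running minimum, and the re-rooting at the argmax of $L_t-at$, which is the technical heart of the result. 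A more robust alternative is to prove the exact discrete analogue first --- for reflected bridges of a simple random walk, a cyclic-lemma (Dvoretzky--Motzkin-type) argument yields the identity and identifies the discrete re-rooting point --- and then pass to the Brownian scaling limit via Donsker's theorem together with continuity of the re-rooting functional at the maximiser of $L_t-at$.
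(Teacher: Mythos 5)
This statement is quoted in the paper from Chassaing--Janson \cite[Theorem~2.6]{CJ01} and is not proved there, so there is no in-paper argument to compare against; I therefore assess your proposal on its own terms.

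Your preliminary reductions are sound: the identification of $Z_a$ with $\mathsf{R}(\be-a\,\mathrm{id})$ (the infimum over $(-\infty,t]$ being attained on $[0,t]$), the Vervaat rewriting of the left-hand side, the L\'evy-type representation of $(X_a,L(X_a))$ via a bridge $P$ from $0$ to $-a$ conditioned to stay above $-a$, and the identification of $V$ with $\arg\min_{[0,1]}(P+a\,\mathrm{id})$ are all correct (and the wrap-around computation for $X_a(V+\cdot)=\mathsf{R}(P(V+\cdot)-P(V))$ checks out, provided $P$ is extended by $P(1+s)=P(s)-a$). But these steps only re-express the two sides; in particular the Vervaat step on the left is essentially circular, since $k(m+t)-k(m)=\be(t)-at$ returns you to $\mathsf{R}(\be-a\,\mathrm{id})$. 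What remains --- showing that $\mathsf{R}(\be-a\,\mathrm{id})$ and $\mathsf{R}(P(V+\cdot)-P(V))$ have the same law, where one underlying path is an unconditioned drifted excursion and the other is the min-conditioned bridge re-rooted at $V$ --- is exactly the content of the theorem, and you explicitly leave it unproved (``the hard part will be the final step''). Note that the stronger pathwise identity $P(V+\cdot)-P(V)\overset{(d)}{=}\be-a\,\mathrm{id}$ is not available, so one cannot simply discard the reflection; the interaction between the conditioning, the re-rooting at the maximiser of $L_t-at$, and the reflection operator must be analysed, and no such analysis is given.

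The one-sentence ``robust alternative'' (discrete cyclic-lemma identity for reflected random-walk bridges, then Donsker) is indeed the route taken by Chassaing and Janson themselves, but as written it is a programme, not a proof: you would need to specify the discrete model, prove the exact combinatorial identity, and establish joint convergence of the path, its local time, and the argmax functional $t\mapsto L_t-at$ (whose continuity at the maximiser requires uniqueness of that maximiser, itself a nontrivial fact). As it stands the proposal is an honest and largely correct set-up with the central step missing.
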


Let us show Lemma \ref{lem:excbrowprop}.

\begin{proof}[Proof of Lemma \ref{lem:excbrowprop}]
First observe that it is clear by the Markov property and \cite[Proposition $11$]{Ber00} that $\be$ satisfies (i) and (ii). Let us prove that it satisfies (iii). To this end, observe that almost surely $X_{a}(V)=0$ by definition of $V$. Hence, the excursions of $Z_a$, ordered by non-increasing order of length, are distributed as the excursions, ordered by non-increasing order of length, of $X_{a}$. As a consequence, conditionally given their endpoints, the excursions  of $X_{a}$ ordered by non-increasing order of length, are independent and distributed as appropriately rescaled standard Brownian excursions (see e.g. the proof of \cite[Lemma $12$]{Pit99}). This proves that $\be$ satisfies (iii).
\end{proof}

Let us now prove Proposition \ref{prop:charaterization}.

\begin{proof}[Proof of Proposition \ref{prop:charaterization}]
It thus remains to check that these assumptions characterize the distribution of $f$. Let $f$ be a function satisfying (i), (ii), (iii). For every $\varepsilon>0$ we shall construct a coupling $(f,\be)$ such that $\P( \|f- \be \|>\varepsilon)<\varepsilon$, which will imply that $f$ and $\be$ have the same law (indeed, this implies e.g.~that the Lévy-Prokhorov distance between the laws of $f$ and $\be$ is at most $\varepsilon$).  To simplify notation, set
$$\mathcal{U}=\bigcup_{k \in \Z_+} \N^k.$$
If $\mathbf{s} \in \N^k$ with $k \geq 0$ we set $|\mathbf{s}|=k$.
Consider a family of i.i.d. $1/2$-stable subordinators $(S^{\mathbf{s}})_{\mathbf{s} \in \mathcal{U}}$ with the law of $S$, where by convention $\N^0=\{ \varnothing \}$. For each $\mathbf{s} \in \mathcal{U} $, set $Z^\mathbf{s}_t = (1+S^\mathbf{s}_t)^{-1}$ for $t \geq 0$. Denote by $\mathbb{T}^\mathbf{s} \coloneqq (t_i^\mathbf{s})_{i \geq 1}$ the set of jump times of $S^\mathbf{s}$, ordered by decreasing values of $d_i^\mathbf{s}-g_i^\mathbf{s}$, where $d_i^\mathbf{s}=Z^\mathbf{s}_{t_i-}$ and $g_i^\mathbf{s}=Z^\mathbf{s}_{t_i}$ (in case of equality, sort them by increasing value of $g_i^\mathbf{s}$).

Now we define by induction sets of points $(\mathcal{R}^\mathbf{s})_{\mathbf{s} \in \mathcal{U}}$ as follows. Define intervals $[a^{\mathbf{s}}, b^\mathbf{s}]_{\mathbf{s} \in \mathcal{U}}$ as: 
\begin{itemize}[noitemsep,nolistsep]
\item for $\mathbf{s}=\varnothing$, $a^\varnothing=0, b^\varnothing=1$;
\item for $\mathbf{s} \neq \varnothing$, let $\overline{\mathbf{s}} \in \mathcal{U}, i \in \N$ be such that $\mathbf{s}=\overline{\mathbf{s}} \cdot i$. Then, we define 
\begin{align*}
a^\mathbf{s} \coloneqq a^{\overline{\mathbf{s}}} + \left(b^{\overline{\mathbf{s}}} - a^{\overline{\mathbf{s}}}\right) g_i^\mathbf{s},  \qquad b^\mathbf{s} \coloneqq a^{\overline{\mathbf{s}}} + \left(b^{\overline{\mathbf{s}}} - a^{\overline{\mathbf{s}}}\right) d_i^\mathbf{s}.
\end{align*}
\end{itemize}

For every $\varepsilon>0$, we shall now use the subordinators $(S^\mathbf{s})_{\mathbf{s} \in \mathcal{U}}$ to construct a coupling between $f$ and $\be$ such that $\P( \|f- \be \|>\varepsilon)<\varepsilon$. For every fixed $k \geq 1$, set  $\mathcal{R}_k := \{ a^\mathbf{s}, |\mathbf{s}| \leq k \} \cup \{ b^\mathbf{s}, |\mathbf{s}| \leq k \}$. Since $f$ and $\be$ both satisfy (ii) and (iii), we can couple them using the subordinators $(S^{\mathbf{s}})_{|\mathbf{s}| \leq k}$, so that a.s. 
$$\forall u \in \mathcal{R}_k, \qquad f(u) = \be(u).$$
Next, for every $\epsilon>0$, one can find $K_\epsilon \in \Z_+$ such that
$$\P \left( \sup \{ b-a:  a,b \in \mathcal{R}_{K_\epsilon}, (a,b) \cap \mathcal{R}_{K_\epsilon}=\varnothing \}>\epsilon \right) < \epsilon.$$
Furthermore, since $\be$ and $f$ are continuous on $[0,1]$, they are uniformly continuous. In particular, there exists $C_\eta>0$ such that $\P \left( \omega(\be) > C _{\eta} \right)<\eta$ and $\P \left( \omega(f) > C_\eta \right)<\eta$.
Now, on the event that $\{\sup \{ b-a : a,b \in \mathcal{R}_{K_\epsilon}, (a,b) \cap \mathcal{R}_{K_\epsilon}=\varnothing <\epsilon , \omega(\be) < C_\eta, \omega(f) < C_\eta\}$, we have clearly $\Vert f-\be \Vert < 2C_\eta \epsilon$. Thus $\P( \Vert f-\be \Vert > 2C_\eta \epsilon) \leq \varepsilon+2\eta$ (observe that the choice of $C_\eta$ is independent of $\epsilon$). This completes the proof.
\end{proof}

\begin{prop}
\label{prop:loie}
We have
\begin{align*}
(F(t), 0 \leq t \leq 1) \overset{(d)}{=} (\be_t, 0 \leq t \leq 1).
\end{align*}
\end{prop}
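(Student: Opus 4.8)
The plan is to apply the characterization of Proposition~\ref{prop:charaterization}: by that proposition together with Lemma~\ref{lem:excbrowprop}, it suffices to verify that $F$ — continuous by Proposition~\ref{prop:continuity} — satisfies assumptions (i), (ii) and (iii), and I would use Lemma~\ref{lem:bijectionsubtreescomponents} and Proposition~\ref{prop:tailles} throughout to translate properties of $F$ into properties of the cut-tree $\cutT$. Assumption (i) is immediate: with $h=0$ the Pac-Man algorithm stops at once ($N=1$, $\pi_0=0$, the eaten subtree having zero $\nu$-mass), so \eqref{eq:defF} gives $F(0)=0$; alternatively, since $\pi_h\to 0$ as $h\to 0^+$, the estimate from the first case of the proof of Proposition~\ref{prop:continuity} (via Lemmas~\ref{lem:leaf} and~\ref{lem:F}) gives $F(h)\to 0$ and one concludes by continuity.

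For (ii) I would first identify $P_t(F)$. Fix $t>0$ and let $x_t$ be the unique point of the spine $\llbracket\rho,0\rrbracket$ with $\tau_{x_t}=t$ (it exists and is unique, since $\tau$ is continuous, vanishes at $\rho$, increases along branches and tends to $+\infty$ at the leaf $0$). Since $x_t$ lies on the spine, its record sequence has $B_1=x_t$, so $N=1$, $h_0(x_t)=0$, $h_1(x_t)=\nu(\cutT_{x_t}^0)$ and, by \eqref{eq:defF}, $F(h_1(x_t))=\tau_{x_t}\nu(\cutT_{x_t}^0)=t\,h_1(x_t)$; hence $F_t(0)=F_t(h_1(x_t))=0$, while \eqref{eq:claims1} (established within the proof of Proposition~\ref{prop:tailles}) gives $F_t(h)>0$ for $h\in(0,h_1(x_t))$. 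Therefore $P_t(F)=h_1(x_t)=\nu(\cutT_{x_t}^0)$, which by Lemma~\ref{lem:bijectionsubtreescomponents} is the $\mu$-mass of the component of $\cT\setminus\cP_t$ containing the root $\varnothing=U_0$. So $(P_t(F))_{t\ge0}$ is the mass process of the component of $\varnothing$ in the Aldous--Pitman fragmentation; since $\varnothing$ is, up to the re-rooting invariance of the Brownian CRT, a $\mu$-typical point, this process has the law of the tagged-fragment mass process, which is that of $(1/(1+S_t))_{t\ge0}$ by \cite{Ber00} — exactly the fact invoked in the proof of Lemma~\ref{lem:excbrowprop}. Hence $F$ satisfies (ii).

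For (iii), note that $(P_t(F))_t=(\nu(\cutT_{x_t}))_t$ jumps precisely when $x_t$ crosses a branchpoint $b$ of $\cutT$ lying on the spine $\llbracket\rho,0\rrbracket$: enumerating the jumps as $(t_i,a_i,b_i)$ as in (iii), each $i$ corresponds to such a $b$ with $t_i=\tau_b$, $a_i=\nu(\cutT_b^0)=h_1(b)$, $b_i=\nu(\cutT_b)=h_2(b)$, hence $b_i-a_i=\nu(\cutT_b^{\bar{\ell}(b)})=:m_i$; and by Lemma~\ref{lem:target}, as $h$ runs over $[a_i,b_i]=[h_1(b),h_2(b)]$ the target $\pi_h$ runs onto $\cutT_b^{\bar{\ell}(b)}$. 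The crux is the identity
\[
F(h)-t_i\,h \;=\; F^{(b)}\!\left(h-a_i\right),\qquad h\in[a_i,b_i],
\]
where $F^{(b)}$ is the Pac-Man function built from the subtree $\cutT_b^{\bar{\ell}(b)}$, rooted at $b$, with distinguished leaf $\bar{\ell}(b)$, mass measure $\nu$ restricted to it, and time function $z\mapsto\tau_z-t_i$ — this being genuinely the $\tau$-function of that subtree, since $\nu(\cutT_z)=\nu\big((\cutT_b^{\bar{\ell}(b)})_z\big)$ for $z$ in it. One proves this by peeling off the first two steps $(\rho,0),(b,\bar{\ell}(b))$ of the record sequence of $\pi_h$, recognising the remaining steps as the record sequence of $\pi_h$ inside $\cutT_b^{\bar{\ell}(b)}$ (so that the mass eaten there equals $h-a_i$), and substituting into \eqref{eq:defF} using $\tau_z=t_i+(\tau_z-t_i)$ and $F(a_i)=\tau_b\nu(\cutT_b^0)=t_i\,a_i$ (the $\mathbb{h}_2$/boundary corrections behave identically). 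As a Brownian CRT of mass $m$ is $\sqrt m$ times a unit one in distances — so its $\tau$-function is $m^{-1/2}$ times, and its mass measure $m$ times, the normalized ones — this rescales to
\[
(b_i-a_i)^{-1/2}\Big(F\big(a_i+(b_i-a_i)u\big)-t_i\big(a_i+(b_i-a_i)u\big)\Big) \;=\; \widehat F^{(b)}(u),\qquad u\in[0,1],
\]
with $\widehat F^{(b)}$ the Pac-Man function of the \emph{normalized} subtree $\cutT_b^{\bar{\ell}(b)}$.

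To conclude, I would invoke the recursive structure of the Brownian cut-tree. By the description following Theorem~\ref{thm:cuttreecrtiscrt}, $\cutT_b^{\bar{\ell}(b)}$ is the cut-tree of the component of $\cT\setminus\cP_{t_i}$ that detaches at time $t_i$ — a Brownian CRT of mass $m_i$ carrying an independent Poissonian rain (time-shifted by $t_i$) — with $\bar{\ell}(b)$ the image of its root; and by Theorem~\ref{thm:cuttreecrtiscrt} together with Remark~\ref{rk:rem1}, conditionally on the spine decorated by the masses $(m_i)$ — equivalently, on $(P_t(F))_t$ — the subtrees $\cutT_b^{\bar{\ell}(b)}$ over all spine branchpoints $b$ are independent and, once normalized, each distributed as $(\cutT,\nu,0)$. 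Hence the $\widehat F^{(b)}$ are, conditionally on $(P_t(F))_t$, i.i.d.\ with the law of $F$, which is exactly (iii); combined with (i), (ii) and Proposition~\ref{prop:charaterization}, this yields $(F(t))_{0\le t\le1}\overset{(d)}{=}(\be_t)_{0\le t\le1}$. I expect the two main obstacles to be the bookkeeping behind the record-sequence identity above, and the verification that conditioning on the decorated spine both coincides with conditioning on $(P_t(F))_t$ and leaves the subtrees jointly independent — this last point being precisely where the self-similar recursive structure of the Brownian CRT, inherited by its cut-tree, does the real work.
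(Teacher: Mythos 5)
Your proof is correct and follows the same skeleton as the paper's (verify the three conditions of Proposition~\ref{prop:charaterization}), but you develop the verifications of (ii) and especially (iii) from scratch, whereas the paper dispatches them as one-line citations: for (ii) to \cite[Theorem 1 and Proposition 11]{Ber00} combined with rerooting invariance, and for (iii) to \cite[Corollary 2.3]{BW17}. Concretely, your identification of $P_t(F)=h_1(x_t)=\nu(\cutT_{x_t}^0)$ as the mass of the root component via \eqref{eq:claims1} and Lemma~\ref{lem:bijectionsubtreescomponents} is exactly the translation the paper leaves implicit, and your record-sequence identity $F(h)-t_i h=F^{(b)}(h-a_i)$ on $[a_i,b_i]=[h_1(b),h_2(b)]$, the rescaling of the Pac-Man function under Brownian scaling (mass $\mapsto m$, distances $\mapsto m^{1/2}$, $\tau\mapsto m^{-1/2}\tau$), and the appeal to the spine decomposition of the Brownian cut-tree together with Remark~\ref{rk:rem1} constitute an in-place proof of the fact that the paper imports from \cite{BW17}. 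This buys transparency: one sees precisely how the self-similar decomposition of $\cutT$ along the spine $\llbracket\rho,0\rrbracket$ produces condition (iii), rather than taking it on citation. The one place to be a bit more careful is your first, ``direct'' argument for $F(0)=0$: with $h=0$ the algorithm gives $\pi_0=0$ (a leaf) and the formula \eqref{eq:defF} produces the indeterminate product $\tau_0\cdot 0=\infty\cdot 0$; your fallback argument — continuity at $0$ via Lemmas~\ref{lem:leaf} and~\ref{lem:F}, which is also the paper's route — is the one that actually pins down the value. Aside from this, the bookkeeping you flag (boundary $\mathbb{h}_2$ corrections in the peeling identity, and the equivalence of conditioning on the mass-decorated spine and on $(P_t(F))_t$) is indeed where the remaining work lies, but both go through along the lines you indicate.
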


\begin{proof}
We need to prove that $F$ is continuous and satisfies (i), (ii), (iii) in Proposition \ref{prop:charaterization}. We immediately obtain (i) from Lemma \ref{lem:leaf} applied to the leaf $0$, continuity from Proposition \ref{prop:continuity} and (ii) from \cite[Theorem $1$ and Proposition $11$]{Ber00} combined with invariance by uniform rerooting of the Brownian CRT. Finally, (iii) comes from \cite[Corollary $2.3$]{BW17}.
\end{proof}

\begin{rk}\label{rem:decomposition}
The previous considerations entail that $ \mathcal{C}$ can be decomposed as follows, where we set  $P_{t}=P_{t}(F)$ and $y_t = P_{t-}-P_t$ to simplify notation.

\emph{Step 1.} The branch $\llbracket {\rho}, {\bar{\ell}({\rho})} \rrbracket$ isometric to a line segment with length $\int_0^\infty P_{s} \mathrm{d}s$.

\emph{Step 2.} For every $t>0$ such that $P_{t} < P_{t-}$ there is a branchpoint ${b}_t$ on $\llbracket {\rho}, {\bar{\ell}({\rho})} \rrbracket$ at distance $\int_0^t P_{s} \mathrm{d}s$ from the root, and on  ${b}_t$ is grafted the tree obtained by iteration with the function $F^{(t)}$ defined by $F^{(t)}=  y_{t}^{-1/2}(F({P_{t}+s y_{t}})-t(P_{t}+s y_{t})_{0 \leq s \leq 1}$, with distances renormalized by $y_t^{1/2}$. The leaf $\bar{\ell}(b_{t})$ is then the leaf associated with $b_{t}$ in the first step of this iteration. This construction provides us with an increasing sequence of trees (for the inclusion), and $\cC$ is the completion of the union of these trees.

\end{rk}

In order to see this, observe that each $t$ such that $P_t \neq P_{t-}$, we have $P_t=\mu_t(\varnothing)$. By monotonicity and density of such instants, this holds for all $t \geq 0$. In particular, for each branchpoint $b \in \llbracket \rho, \bar{\ell}(\rho) \rrbracket$ corresponding to a cutpoint appearing at time $t$, we have
\begin{align*}
d(\rho, b_t) = \int_0^t \mu_s(\varnothing) ds = \int_0^t P_s \mathrm{d}s.
\end{align*}
Letting $t \rightarrow \infty$, we have $d(\rho,\bar{\ell}(\rho))=\int_0^\infty P_s \mathrm{d}s$.
In order to show Step $2$, consider a branchpoint $b_t \in \llbracket \rho,\bar{\ell}(\rho) \rrbracket$ and a branchpoint $b'_u \in \llbracket b_t, \bar{\ell}(b_t) \rrbracket$ corresponding to a cutpoint $c$ appearing at time $u>t$. Again, it holds that
\begin{align*}
d(b_t,b'_u) = \int_t^u \mu_s(c) \mathrm{d}s = y_t^{1/2} \int_0^{y_t^{1/2}(u-t)} P_s(F^{(t)}) \mathrm{d}s,
\end{align*}
as $\mu_s(c)=\mu_t(c) \cdot P_{y_t^{1/2}(s-t)}(F^{(t)}) = y_t \cdot P_{y_t^{1/2}(s-t)}(F^{(t)})$ for all $s \geq t$ by the same argument.

\subsection{Proof of Theorem \ref{thm:couplage} (i)}
\label{ssec:proof}

Roughly speaking, to establish  Theorem \ref{thm:couplage} (i),  we shall consider the  function $F$ obtained by the Pac-Man algorithm from the Brownian CRT $\cT$ and the Poissonian rain $\mathcal{P}$. However, one has to be slightly careful since the Pac-Man algorithm has been defined using the cut-tree $\cC$, which is itself defined by using an additional source of randomness, namely the points $(U_{i})_{i \geq 1}$, so it is not clear whether the function $F$ defined this way is a measurable function of $(\cT,\cP)$.

To overcome this issue, we explain how the ``Bertoin function'' $F$ can be directly defined from $( \mathcal{T},\mathcal{P})$ in a measurable way.  Recall that for every $ t \geq 0$ and $ x\in \cT$, we denote by $\cT_{t}(x)$ the connected component of   $ \cT \backslash  \cP_{t}$ containing $x$ and $\mu_{t}(x)=\mu(\cT_{t}(x))$ its $\mu$-mass. We first set $F_{(\cT,\cP)}(0)=0$. Now, take $h \in (0,1]$. There are three cases:
\begin{itemize}[noitemsep,nolistsep]
\item[(i)] If there exists $t_1 \geq 0$ such that $\mu_{t_1}(\varnothing)=h$ , then we set $F_{(\cT,\cP)}(h)=t_1 h$.
\item[(ii)] If there exists $t_{1} \geq 0$ such that $\mu_{t_1-}(\varnothing)=h$, then we set $F_{(\cT,\cP)}(h)=t_1h$.
\item[(iii)] Otherwise, there exists $t_1 \geq 0$ such that $\mu_{t_1}(\varnothing) < h < \mu_{t_1-}(\varnothing)$.
\end{itemize}
In case (iii), notice that there exists a unique cutpoint $c_1 \in \cP_\infty$ that has appeared at time $t_1$. Furthermore, $c_1 \in \cT_{t_1-}(\varnothing)$. We now reason inductively. There are again three cases:
\begin{itemize}[noitemsep,nolistsep]
\item[(i)] If there exists $t_2 \geq t_1$ such that $\mu_{t_1}(\varnothing)+\mu_{t_2}(c_1)=h$, then we set $F_{(\cT,\cP)}(h)=t_1 \mu_{t_1}(\varnothing) + t_2 \mu_{t_2}(c_1)$.
\item[(ii)] If there exists $t_2 \geq t_1$ such that $\mu_{t_1}(\varnothing)+\mu_{t_2-}(c_1)=h$, then we set $F_{(\cT,\cP)}(h)=t_1 \mu_{t_1}(\varnothing)+t_2 \mu_{t_2-}(c_1)$.
\item[(iii)] Otherwise, there exists $t_2 \geq t_1$ such that $\mu_{t_2}(c_1) < h - \mu_{t_1}(\varnothing) < \mu_{t_2-}(c_1)$.
\end{itemize}
Thus, there are finally three cases, depending on $h$:

\begin{itemize}[noitemsep,nolistsep]
\item[(a)] either there exists a finite sequence $c_1, \ldots, c_k$ of cutpoints appeared at respective times $t_1, \ldots, t_k$ such that $h=\sum_{i=1}^k \mu_{t_i}(c_{i-1})$, $c_i \in \cT_{t_i-}(c_{i-1})$ for all $i \leq k$; in this case, $F_{(\cT,\cP)}(h)=\sum_{i=1}^k t_i \mu_{t_i}(c_{i-1})$;
\item[(b)] there exists a finite sequence $c_1, \ldots, c_k$ of cutpoints appeared at respective times $t_1, \ldots, t_k$ such that $h=\sum_{i=1}^{k-1} \mu_{t_i}(c_{i-1})+\mu_{t_k-}(c_{k-1})$, in which case $F_{(\cT,\cP)}(h)=\sum_{i=1}^{k-1} t_i \mu_{t_i}(c_{i-1})+t_k \mu_{t_k-}(c_{k-1})$.
\item[(c)] there exists an infinite sequence $c_1, \ldots$ of cutpoints appeared at respective times $t_1, \ldots, t_k$ such that $h=\sum_{i=1}^{\infty} \mu_{t_i}(c_{i-1})$, in which case $F_{(\cT,\cP)}(h)=\sum_{i=1}^\infty t_i \mu_{t_i}(c_{i-1})$.
\end{itemize}

\bigskip

We shall now prove that $F_{(\cT,\cP)}$ meets the requirements of Theorem \ref{thm:couplage} (i).

\begin{proof}[Proof of Theorem \ref{thm:couplage} (i)]
Consider a Brownian CRT $\cT$, the Poissonian rain $\mathcal{P}$  on $\cT$ and and let $U=(U_i)_{i \geq 1}$ be a sequence of i.i.d.~leaves of $\cT$ sampled according to the mass measure, independently of $\cP$. Denote by $F_{(\cT, {U}),\cP}$ the function defined by \eqref{eq:defF} using the Pac-Man algorithm. By the correspondence between cutpoints in $\cT$ and branchpoints in $\mathcal{C}$ and the associated masses (Lemma \ref{lem:bijectionsubtreescomponents}), almost surely $F_{(\cT, \mathcal{U},\cP )}=F_{(\cT,\cP)}$.

By Propositions \ref{prop:tailles} and \ref{prop:loie}, almost surely,  $F_{(\cT,\cP)}$ has the law of the Brownian excursion and almost surely, for every $t \geq 0$, the nonincreasing rearrangement of the masses of the connected components of $\mathcal{T} \backslash \cP_{t}$ is the same as the nonincreasing rearrangement of the lengths of the excursions of $(F_{(\cT,\cP)}(s)-ts)_{ 0 \leq s \leq 1}$ above its running infimum. This completes the proof.
\end{proof}

\section{Recovering the original tree together with its Poissonian rain}
\label{sec:recover}

Let $\cT$ be a Brownian CRT with mass measure $\mu$, $U \coloneqq (U_i)_{i \geq 1}$ a sequence of i.i.d. leaves of $\cT$ with common distribution $\mu$, and $\cP$ a Poissonian rain on $\Sk(\cT)$ independent of $U$. An important question in the literature (see \cite{ADG19, BHW22, BW17}) concerns the problem of reconstruction of the original tree: is it possible to reconstruct  $(\cT, U, \cP)$ being given the cut-tree? 

It turns out that there is a loss of information when one goes from a triple $(\cT, U, \cP)$ to the cut-tree $(\cutT, \N)$, where $\N$ denotes the subset $\{i: i \geq 1\}$ of points of $\mathcal{C}$ built in Sec.~\ref{ssec:cuttree}. More precisely, the following holds:
\begin{thm}{\cite[Theorem $3.2$ (c)]{BW17}}
Let $\cT$ be a Brownian CRT. Then there exists a (random) tree $\mathsf{shuff}(\cT)$ such that  in distribution:
\begin{align*}
(\cT, \mathsf{shuff}(\cT)) \overset{(d)}{=} (\mathrm{Cut}(\cT), \cT).
\end{align*}
\end{thm}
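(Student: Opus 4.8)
The plan is to reduce the statement to the identity in law $\mathrm{Cut}(\cT)\overset{(d)}{=}\cT$ furnished by Theorem~\ref{thm:cuttreecrtiscrt}, via a disintegration argument, and then --- if one wants an \emph{explicit} $\mathsf{shuff}$ --- to appeal to the explicit inversion of the cut-tree construction. Write $\mathsf{P}$ for the law of the Brownian CRT, regarded as a Borel probability measure on the Polish space of isometry classes of rooted compact measured real trees equipped with the rooted Gromov--Hausdorff--Prokhorov topology. Although $\mathrm{Cut}(\cT)$ also depends on the Poissonian rain $\cP$ and on the mass-sampled leaves $(U_i)_{i\geq 1}$, the pair $(\mathrm{Cut}(\cT),\cT)$ has a well-defined joint law $\Pi$ on the product of two copies of this space; by Theorem~\ref{thm:cuttreecrtiscrt} its first marginal equals $\mathsf{P}$ (and so does its second, trivially). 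Since the space is Polish, regular conditional distributions exist, so I would disintegrate $\Pi(\mathrm{d}\mathsf{c},\mathrm{d}\mathsf{t})=\mathsf{P}(\mathrm{d}\mathsf{c})\,K(\mathsf{c},\mathrm{d}\mathsf{t})$ for a Markov kernel $K$, where $K(\mathsf{c},\cdot)$ is a version of the conditional law of $\cT$ given $\{\mathrm{Cut}(\cT)=\mathsf{c}\}$.

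Then, on a probability space carrying a Brownian CRT $\cT$ together with an independent uniform random variable on $[0,1]$ --- which, the target space being Polish and hence Borel-isomorphic to $[0,1]$, suffices to realize $K$ through a measurable map --- I would let $\mathsf{shuff}(\cT)$ be a random tree with conditional law $K(\cT,\cdot)$ given $\cT$. By construction the law of $(\cT,\mathsf{shuff}(\cT))$ is $\mathsf{P}(\mathrm{d}\mathsf{c})\,K(\mathsf{c},\mathrm{d}\mathsf{t})=\Pi$, i.e.\ exactly the law of $(\mathrm{Cut}(\cT),\cT)$, which is the desired identity. The extra randomness is genuinely needed here, mirroring the fact noted after Theorem~\ref{thm:couplage} that $(\cT,\cP)$ cannot be recovered from the Bertoin excursion alone.

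For an explicit $\mathsf{shuff}$ --- which is what the reconstruction problem of Section~\ref{sec:recover} is really about --- I would instead invert the cut-tree construction directly. Given a Brownian CRT $\mathsf{c}$, to be thought of as playing the role of the cut-tree $\cutT$, I would first equip it with the decorations that a genuine cut-tree carries: an i.i.d.\ sequence of $\nu$-leaves in the role of $\N$, and a family of distinguished branchpoint-leaves $(\bar{\ell}(b))_{b\in\cB(\mathsf{c})}$ whose joint conditional law given $(\mathsf{c},\nu,\N)$ is the one described in Remark~\ref{rk:rem1} --- this is where the auxiliary randomness enters. From $\mathsf{c}$, its measure, the deterministic labelling $\tau$, and these decorations, Lemma~\ref{lem:bijectionsubtreescomponents} reads off the masses of all components of the Aldous-Pitman fragmentation at every time; re-gluing the fragments along the recursive recipe of Remark~\ref{rem:decomposition}, run in reverse (that is, as an additive coalescent rather than a fragmentation), then yields a real tree $\cT$ carrying a Poissonian rain $\cP$ with the prescribed fragmentation, as carried out in \cite{ADG19}. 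The main obstacle will be to check that the decorations placed on $\mathsf{c}$ have precisely the marginal and conditional laws dictated by the cut-tree construction --- equivalently, that the reconstructed $\cT$ is a Brownian CRT with the \emph{correct joint law} with $\mathsf{c}$, i.e.\ that this explicit kernel agrees with the abstract $K$ above. The purely measure-theoretic part (Polishness of the tree space, existence of regular conditional distributions) is routine.
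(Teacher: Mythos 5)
This theorem is cited from Broutin--Wang \cite[Theorem 3.2(c)]{BW17}; the paper under review does not prove it but merely invokes it in Section~\ref{sec:recover} to motivate the reconstruction problem, so there is no in-paper proof to compare against. That said, your first (abstract) argument is, on its own terms, correct: once one knows $\mathrm{Cut}(\cT)\overset{(d)}{=}\cT$ from Theorem~\ref{thm:cuttreecrtiscrt}, the existence of a random tree $\mathsf{shuff}(\cT)$ with the stated joint law is exactly the statement that the conditional law of $\cT$ given $\mathrm{Cut}(\cT)$ can be realized as a Markov kernel from the Polish tree space to itself, and that such a kernel can be represented by a measurable map of $(\cT,V)$ with $V$ an independent uniform. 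Both facts are standard on Polish spaces. In effect this shows the stated existence assertion is a soft corollary of Bertoin--Miermont, which is worth noting but also means the argument carries no information about what $\mathsf{shuff}$ actually does.

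The substantive content of \cite{BW17} is the explicit description of the shuffle as a recursive re-attachment (``shuffling'') of the subtrees hanging off branchpoints, which is what makes the result useful. Your second paragraph gestures toward this by proposing to decorate the CRT with routing-type variables and invert, but be aware that this is the \cite{ADG19} route, not the \cite{BW17} one: ADG19 enriches the cut-tree with \emph{full} routings and recovers $(\cT,U,\cP)$ almost surely (Theorem~\ref{thm:ADG}); BW17's shuffle gives only a distributional identity and does not attempt a.s.\ reconstruction. Also, as the paper's Section~\ref{ssec:B2} makes explicit, the \emph{half}-routings $(\bar{\ell}(b))_{b\in\cB(\cC)}$ alone are insufficient --- one needs the complementary half as well before Theorem~\ref{thm:ADG} applies. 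So the second sketch, if pursued, would actually prove something stronger than the cited statement, provided one supplies the missing half of the routings and verifies their conditional law; as written it stops short of that.
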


Here one recovers $\cT$ from $\mathrm{Cut}(\cT)$ only in distribution.
Later, Addario-Berry, Dieuleveut and Goldschmidt \cite{ADG19} have shown that, if one considers an enrichment of the cut-tree transform with information called \emph{routings}, it is possible to almost surely recover the initial tree (along with $U$ and $\cP$) from this enriched cut-tree (see \cite{BHW22} for an extension to Inhomogeneous Continuum Random Trees). In short, routing variables are a collection of uniform points in every subtree dangling on the branchpoints.

More precisely, recall that for every $b \in \mathcal{B}(\cC)$ the subtree $\cC_{b}$ of all descendents of $b$ is $\cC_{b}= \cC_{b}^{\ell(b)} \cup \cC_{b}^{\bar{\ell}(b)}$. Denote by $ \mathcal{S}$ the set of all subtrees of the form $ \cC_{b}^{\ell(b)}$ or $\cC_{b}^{\bar{\ell}(b)}$ for $b \in   \mathcal{B}(\cC)$.
Consider the set of so-called routing variables  $Z= (Z_{\mathcal{A}}, \mathcal{A} \in \mathcal{S})$ where $Z_{\cC_{b}^{\ell(b)}}$ is the image $\Lambda(\cC_{b}^{\ell(b)})$ of $b$ in $\cC_{b}^{\ell(b)}$ and $Z_{\cC_{b}^{\bar{\ell}(b)}}$ is the image $\Lambda(\cC_{b}^{ \bar{\ell}(b)})$ of $b$ in $\cC_{b}^{\bar{\ell}(b)}$.
 By \cite[Proposition 12]{ADG19},  for every $\mathcal{A} \in \mathcal{S}$ the random variable $Z_{\mathcal{A}}$ has law $\nu_{\mathcal{A}}$, and these random variables are conditionally independent given $(\cC, \nu,\N)$. We also have:

\begin{thm}{\cite[Proposition 12 \& Corollary $17$]{ADG19}}
\label{thm:ADG}
There exists a (deterministic) measurable map $\Phi$ such that, almost surely: $$\left(\cT, U, \cP\right) = \Phi\left((\cC, \N,Z)\right).$$
\end{thm}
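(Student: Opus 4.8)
This result is \cite[Proposition~12 \& Corollary~17]{ADG19}; let me outline the strategy of the proof. Since $\Phi$ is only required to be measurable, producing it is equivalent to showing that the conditional law of $(\cT,U,\cP)$ given $(\cC,\N,Z)$ is almost surely a Dirac mass, and the plan is to establish this by exhibiting, on an event of full probability, an explicit reconstruction. Informally, $\Phi$ runs the fragmentation in reverse: it rebuilds $\cT$ by running the additive coalescent, using the routing variables $Z$ to supply exactly the metric information that the plain cut-tree transform destroys (recall from the \cite{BW17} theorem quoted above that $(\cC,\N)$ alone determines $\cT$ only in law).

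First I would extract the temporal and combinatorial data from $(\cC,\N)$. The measure $\nu$ is recovered from $\N$ by \eqref{eq:defnu}; the labels $(\tau_x)_{x\in\cC}$ are a measurable function of $\cC$ by \eqref{eq:deft}; the branchpoints of $\cC$ are in bijection with $\cP_\infty$, with $b=i\wedge j$ corresponding to the cutpoint on $\llbracket U_i,U_j\rrbracket$ that appears at time $\tau_b$; and by \eqref{eq:lienmunu} the masses $\mu(\cT_t(U_i))$ of all the fragments are determined. Fix $n$ and consider the finite subtree $\cC_n:=\bigcup_{0\le i\le n}\llbracket\rho,i\rrbracket$ of $\cC$. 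Listing its branchpoints in increasing order of $\tau$ yields a finite increasing sequence of cutting times, each carrying a bipartition of $\{0,1,\dots,n\}$; this is exactly the combinatorial structure of the reduced tree $R_n:=\overline{\bigcup_{1\le i\le n}\llbracket\varnothing,U_i\rrbracket}\subset\cT$, decorated with those cutpoints of $\cP_\infty$ that separate two of the leaves $U_0,\dots,U_n$. The crucial point is that the \emph{edge lengths} of $R_n$ and the \emph{positions} of these cutpoints along the edges are not functions of the cutting times and masses alone -- this is precisely the information lost by the bare cut-tree transform, as one already sees for two leaves, where the first cutpoint on $\llbracket\varnothing,U_1\rrbracket$ is uniformly distributed along that segment -- but they become determined once one also knows, for each such cutpoint $c$ and each of its two sides, a $\nu$-distributed image of $c$ inside the sub-cut-tree on that side. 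Those images are exactly the routing variables $Z$, the correspondence being the root-image identification $\Lambda(\cdot)$ recalled in Section~\ref{ssec:cuttree}. Since each side-subtree is again a cut-tree carrying its own root-image and its own routings, Remark~\ref{rk:rem1} applies at every level, and one shows by induction along this recursion that $(\cC,\N,Z)$ measurably determines the decorated measured tree $(R_n,\varnothing,U_1,\dots,U_n,\textrm{captured cutpoints})$ up to a $\P$-null event.

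Finally I would let $n\to\infty$. The reduced trees $R_n$ increase with $n$, and, using that $\cC$ is almost surely compact and $\nu$ almost surely non-atomic, one checks that the decorated trees $R_n$ converge in the Gromov--Hausdorff--Prokhorov sense, jointly with their marks, to a compact measured real tree carrying a countable dense marked set. This limit is identified with $(\cT,U,\cP)$ because the fragment masses match by \eqref{eq:lienmunu} and the captured cutpoints exhaust $\cP_\infty$ as $n\to\infty$. As each $R_n$ is a measurable function of $(\cC,\N,Z)$ and a Gromov--Hausdorff--Prokhorov limit of measurable maps is measurable, this yields the desired measurable $\Phi$ with $(\cT,U,\cP)=\Phi((\cC,\N,Z))$ almost surely.

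I expect the second step to be the main obstacle: proving that the routings genuinely suffice, i.e.\ that no hidden randomness survives conditioning on $(\cC,\N,Z)$. This rests on ADG's Proposition~12 -- that each $Z_{\mathcal{A}}$ has conditional law $\nu_{\mathcal{A}}$ and that the family $(Z_{\mathcal{A}})$ is conditionally independent given $(\cC,\nu,\N)$ -- together with the self-consistency of the root-image maps $\Lambda$ along the recursion. The delicate feature is that the position of even a single cutpoint is pinned down only through an infinite recursion into ever-smaller sub-cut-trees, so the reconstruction has to be organised as a convergence/consistency argument rather than a finite computation.
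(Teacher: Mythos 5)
This statement is imported wholesale from \cite{ADG19} (Proposition~12 and Corollary~17); the present paper gives no proof of its own, so there is nothing in the source to compare your argument against. What you have produced is a plausible high-level reconstruction of how such a reconstruction theorem is proved, and the overall architecture---extract $\nu$, $\tau$, and the cutpoint/branchpoint dictionary from $(\cC,\N)$; use the routings to pin down the metric data missing from the bare cut-tree transform; build $R_n$ from the reduced cut-tree on $\{0,\dots,n\}$; pass to a GHP limit---matches the spirit of the recursive/reconstruction strategy that ADG actually pursue.

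A few caveats worth flagging. The heart of the matter, as you yourself note, is your ``second step'': the claim that the routings pin down edge lengths and cutpoint positions on $R_n$ almost surely. Your sketch gestures at this via the root-image maps $\Lambda$ and Remark~\ref{rk:rem1}, but does not actually exhibit the formula or the inductive estimate showing no extra randomness survives; that is precisely the content of ADG's Proposition~12 and is not something one can wave through. A small but real gap in the last step is the claim that ``the captured cutpoints exhaust $\cP_\infty$ as $n\to\infty$'': one needs that every $c\in\cP_\infty$ eventually lies on some $\llbracket U_i, U_j\rrbracket$, which follows from the fact that $c$ splits $\cT$ into two components of positive $\mu$-mass together with the density of $(U_i)$, but should be said. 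Finally, GHP-convergence alone does not automatically give convergence ``jointly with their marks'' (the countable dense marked set $\cP_\infty$); one needs a marked-GHP topology or a separate argument for the countable family of marks, and measurability of the limit needs to be taken in that enriched space. None of these defeats your outline, but as written they are gaps that would have to be filled before it becomes a proof rather than a roadmap.
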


The question answered by Theorem \ref{thm:couplage} (ii) is quite similar: being given the ``Bertoin'' Brownian excursion $\be$, is it possible to construct a map $\Psi$ such that $\Psi(\be)$ has the law of $(\mathcal{T},\mathcal{P})$ and $F_{\Psi(\be)}=\be$? The answer is positive, when adding an independent source of randomness.

The strategy of the proof is divided in two steps: first, in Sec.~\ref{ssec:B1} we show that having the  ``Bertoin'' excursion $F_{(\cT,\cP)}$ obtained from $(\cT,\cP)$ is equivalent to having the cut-tree $\cutT$, along with ``half'' of the routing variables. Second, in Sec.~\ref{ssec:B2} we add the additional information (the remaining ``half'' routings) that allows us to reconstruct a tree with a Poissonian rain.

\subsection{From Bertoin's excursion to the semi-enriched cut-tree}
\label{ssec:B1}

We first prove that having the ``Bertoin'' excursion $F_{(\cT,\cP)}$ obtained from $(\cT,\cP)$ is equivalent to having the cut-tree $\cutT$, along with a collection of points that we call half-routings, a notion which we shall now define. 

Let $T$ be a compact binary real tree with root $\rho$, and recall that $\cB(T)$ denotes the set of branchpoints of the tree $T$. We call half-routings on $T$ a collection of leaves ${H} \coloneqq \{ H_b, b \in \cB(T)  \}$ such that $H_b \in T_b$ for every $b \in \cB(T) $, where we recall that $T_{b}$ is set of all (weak) descendents of $b$ in $T$.
For every $b \in \cB(T)$, we define its associated record sequence $(b_i)_{i \geq 0} \in (\cB(T) \cup \{ \rho \})^{\Z_+}$ by induction as follows. Set $b_0=\rho$. Then for every $k \geq 0$, assuming that $(b_i)_{0 \leq i \leq k}$ has been defined, we set $b_{k+1} = \ell_{b_k} \wedge b$. Since $(b_k)_{k \geq 0}$ is increasing for the genealogical order, it converges in $T$.

We say that the collection $H$ of half-routings is \emph{consistent} if the following holds: 
\begin{align*}
\forall b \in \cB(T )\cup \{ \rho \}, \exists N_b \geq 0, b_{N_b}=b.
\end{align*}
In particular, when $H$ is consistent, then for all $b \in \cB(T)$ the sequence $(b_k)_{k \geq 0}$ is stationary after time $N_b$ and, for all $k \geq 1$ such that $b_k \neq b$, we have $\ell_{b_{k}} \in \overline{T}_{b_k}^{\ell_{b_{k-1}}}$. Indeed, by definition, $b_k = \ell_{b_{k-1}} \wedge b$, so that $b \in \overline{T}_{b_k}^{\ell_{b_{k-1}}}$. Thus, if we had $\ell_{b_{k}} \in T_{b_k}^{\ell_{b_{k-1}}}$, then it would follow that $b_{k+1}=\ell_{b_k} \wedge b = b_k$ and hence $b_j=b_k$ for $j \geq k$, so that $b=b_k$ since $H$ is consistent. Furthermore, every branchpoint has a finite record sequence. Finally, denote by $\mathbb{T}_{HR}$ the set of compact rooted binary real trees enriched with the a consistent collection of half-routings and a mass measure supported on its set of leaves.

Observe that the Pac-Man algorithm can be applied mutatis mutandis to any enriched tree $(T, H,M)$, where $H$ is a consistent collection of half-routings on $T$ and $M$ a probability measure on $T$. We denote by $\mathsf{X}(T, H,M)$ the function obtained from this algorithm.

We keep the notation $(\cT, U, \cP)$ for a Brownian CRT along with a sequence of i.i.d. leaves and a Poissonian rain on its skeleton, $\cutT$ its associated cut-tree, and set $\cH \coloneqq \left\{ \bar{\ell}(b), b \in \cB(\cutT)  \right\}$, with $\bar{\ell}(b)$ defined by \eqref{eq:ellellebar}. Observe that $\cH$ is a.s. a consistent collection of half-routings on $\cutT$. Indeed, it is clear by \eqref{eq:ellellebar} that $\bar{\ell}(b) \in \cC_b$ for all $b \in \cB(\cC) $. In addition, by Lemma \ref{lem:cvx}, the collection $\cH$ on $\cC$ is consistent. Denoting by $C\left( [0,1], [0,\infty] \right)$ the set of continuous maps from $[0,1]$ to $[0,\infty]$, we have the following result:

\begin{prop}
\label{prop:excequalcuttree}
There exists a (deterministic) map $\, \Xi: C\left( [0,1], [0,\infty] \right) \rightarrow \mathbb{T}_{HR}$ such that the following properties hold almost surely:
\begin{enumerate}[noitemsep,nolistsep]
 \item[(i)] We have $\Xi \circ \mathsf{X}(\cC,\cH,\nu)=(\cutT,\cH,\nu)$.
 \item[(ii)] Let $\be$ be a standard Brownian excursion. Then $\mathsf{X} \circ \Xi(\be)=\be$.
 \end{enumerate}
\end{prop}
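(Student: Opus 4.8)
The plan is to \emph{define} the map $\Xi$ by the explicit recursive procedure that already appears in Remark~\ref{rem:decomposition}, but now read as a map sending a function to a tree; then (i) follows directly from that remark, and (ii) follows from Proposition~\ref{prop:loie} together with a measurability (``diagonal'') argument. Concretely, given $f\in C([0,1],[0,\infty])$ which is an excursion for which $t\mapsto P_t(f)$ is a pure-jump decreasing function (which, by property~(ii) of Proposition~\ref{prop:charaterization}, holds almost surely when $f=\be$), write $P_t=P_t(f)$, $y_t=P_{t-}-P_t$ for $t>0$, and build $\Xi(f)=(T,H,M)$ recursively: the root $\rho$ carries a spine $\llbracket\rho,H_\rho\rrbracket$ isometric to a segment of length $\int_0^\infty P_s\,\mathrm{d}s$; for each $t>0$ with $y_t>0$ one places a point $b_t$ on this spine at distance $\int_0^t P_s\,\mathrm{d}s$ from $\rho$ and grafts at $b_t$ a copy of $\Xi\big(f^{(t)}\big)$ with all distances multiplied by $y_t^{1/2}$ and its mass measure multiplied by $y_t$, where $f^{(t)}(s)=y_t^{-1/2}\big(f(P_t+sy_t)-t(P_t+sy_t)\big)$ for $0\le s\le1$ (note $f^{(t)}$ is again an excursion on $[0,1]$ since $f(P_t)=tP_t$ and $f(P_{t-})=tP_{t-}$); the half-routing $H_{b_t}$ is the tip of the spine of this grafted copy, $H_\rho$ the tip of the main spine, and $M$ the resulting probability measure on the leaves of $T$. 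On the complement of this set of functions set $\Xi(f)$ to a fixed element of $\mathbb{T}_{HR}$. Exactly as for the construction of a compact real tree from its coding function, one checks that, since $f$ is uniformly continuous and $\sum_t y_t=1$, the recursion terminates and yields a compact rooted binary real tree carrying a consistent family of half-routings, so $\Xi(f)\in\mathbb{T}_{HR}$, and that $\Xi$ is measurable, being a countable limit of measurable finite truncations.

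\textbf{Proof of (i).} This is what Remark~\ref{rem:decomposition} establishes. Writing $F=\mathsf{X}(\cC,\cH,\nu)$, the identity $P_t(F)=\mu_t(\varnothing)$ and the scaling relations for $\mu_s(c)$ recorded in that remark show that the metric tree produced by the recursion above, applied to $F$, is isometric to $\cutT$, with each $b_t$ corresponding to the branchpoint of $\cutT$ coding the cutpoint appearing at time~$t$; the tip of each grafted spine is then $\bar{\ell}(b_t)$, and the tip of the main spine is $\bar{\ell}(\rho)=0$, so $H$ is identified with $\cH$; and by Lemma~\ref{lem:measure} the mass measure produced by the recursion coincides with $\nu$. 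Hence $\Xi\circ\mathsf{X}(\cC,\cH,\nu)=(\cutT,\cH,\nu)$ almost surely. In particular, composing once more with $\mathsf{X}$, we obtain $\mathsf{X}\circ\Xi(F)=\mathsf{X}(\cC,\cH,\nu)=F$ almost surely.

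\textbf{Proof of (ii).} Put $g:=\mathsf{X}\circ\Xi$, a measurable self-map of $C([0,1],[0,\infty])$. By Proposition~\ref{prop:loie}, $F\overset{(d)}{=}\be$, so applying the measurable map $u\mapsto\big(u,g(u)\big)$ to both sides gives $\big(\be,g(\be)\big)\overset{(d)}{=}\big(F,g(F)\big)$. By the last line of the proof of~(i), $\big(F,g(F)\big)=(F,F)$ almost surely, hence $\big(\be,g(\be)\big)\overset{(d)}{=}(F,F)$, which is supported on the diagonal $\Delta=\{(u,u)\}\subset C([0,1],[0,\infty])^2$. Since $C([0,1],[0,\infty])$ is a Polish space, $\Delta$ is closed, hence Borel, and therefore $\P\big((\be,g(\be))\in\Delta\big)=\P\big((F,F)\in\Delta\big)=1$; that is, $\mathsf{X}\circ\Xi(\be)=\be$ almost surely.

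The step I expect to be the main obstacle is not the probabilistic argument but the bookkeeping behind $\Xi$: verifying that the recursion terminates (compactness), and that it produces a \emph{binary} tree carrying a \emph{consistent} family of half-routings in the precise sense demanded by $\mathbb{T}_{HR}$, together with the measurability of $\Xi$ (equivalently of $g=\mathsf{X}\circ\Xi$) for the Gromov--Hausdorff--Prokhorov-type structure on $\mathbb{T}_{HR}$ --- it is exactly this measurability that makes the diagonal argument in~(ii) legitimate and thereby upgrades the distributional identity to an almost sure one.
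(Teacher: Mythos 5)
Your definition of $\Xi$ and your proof of part (i) match the paper's: the map $\Xi$ is the stick-breaking/recursive construction read directly off Remark~\ref{rem:decomposition}, and (i) is established by observing that $\cC$ satisfies the same recursion and that Lemma~\ref{lem:measure} identifies the mass measures. Your proof of part (ii), however, is a genuinely different route. The paper argues \emph{pointwise}: it uses the explicit description of the Pac-Man algorithm at the start of Section~\ref{ssec:proof} to check that $\mathsf{X}(\Xi(\be))$ and $\be$ agree at $P_t$ and $P_{t-}$ for every jump time $t$, then iterates this over the recursive decomposition to deduce agreement on a dense set, and concludes by continuity (the distributional identity $\mathsf{X}(\Xi(\be))\overset{(d)}{=}\be$ is invoked only to guarantee that $\mathsf{X}(\Xi(\be))$ is a.s.\ continuous). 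You instead run a ``diagonal'' measure-theoretic argument: since $g=\mathsf{X}\circ\Xi$ is a fixed deterministic measurable map, $F\overset{(d)}{=}\be$ implies $(\be,g(\be))\overset{(d)}{=}(F,g(F))$; by (i) the latter is a.s.\ supported on the (Borel) diagonal, hence so is the former. This is shorter and avoids any explicit computation with the Pac-Man algorithm, but it places all the weight on the measurability of $\Xi$ (and of $\mathsf{X}$) on the GHP-type space $\mathbb{T}_{HR}$, which you correctly flag as the real technical content — the paper's pointwise route does not need to appeal to that measurability explicitly, although of course the proposition's statement implicitly requires it. Both approaches are sound; yours trades a concrete identification of the agreement set for a cleaner abstraction, and makes the dependence on measurability of $\Xi$ explicit rather than tacit.
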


\begin{proof}[Proof of Proposition \ref{prop:excequalcuttree}]
Let us start by defining the map $\Xi$. By construction, we have $\mathsf{X}(\cC,\cH,\nu)=F_{(\cT,\cP)}$. Since $F_{(\cT,\cP)}$ follows the law of a Brownian excursion by Theorem \ref{thm:couplage} (i), it is therefore enough to define $\Xi(\be)$. We use a stick-breaking construction in the spirit of \cite{AP00} to define $\Xi(\be)=(\tilde{\cC},\tilde{\cH},\tilde{\nu})$. We start with $ \{ \tilde{\rho}\}$, which will be the  root of the tree $\Xi(\be)$. 

\emph{Step 1.} Construct a branch $\llbracket \tilde{\rho}, {\tilde{\ell}(\tilde{\rho})} \rrbracket$ isometric to a line segment with length $\int_0^\infty P_{s} \mathrm{d}s$, where $ P_{s} \coloneqq \inf \{ u>0, \be_u = su \}$.

\emph{Step 2.} For every $t>0$ such that $P_{t} < P_{t-}$ we put a branchpoint $\tilde{b}_t$ on $\llbracket \tilde{\rho}, {\tilde{\ell}(\tilde{\rho})} \rrbracket$ at distance $\int_0^t P_s \mathrm{d}s$ from the root, and on  $\tilde{b}_t$ we graft the tree obtained by iteration with the function $\be^{(t)}$ defined by $\be^{(t)}=  (P_{t-} - P_{t})^{-1/2}(\be_{P_{t}+s(P_{t-}-P_{t})}-t(P_{t}+s(P_{t-}-P_{t}))_{0 \leq s \leq 1}$, which is a Brownian excursion by Proposition \ref{prop:charaterization}. In the first step of this iteration, a leaf, denoted by $\tilde{\ell}(\tilde{b}_t)$, is associated with $\tilde{b}_t$.

This construction provides us with an increasing sequence of trees (for the inclusion). We denote by  $\tilde{\cC}$  the completion of the union of these trees with a collection of half-routings $\tilde{\cH}=(\tilde{\ell}(\tilde{b}) : \tilde{b} \in \mathcal{B}(\tilde{\cC}) \cup \{ \tilde{\rho}\})$.

We check that $\tilde{\cC}$ is compact and establish (i) at the same time. Since the desired properties involve only the law of $\mathbb{e}$, without loss of generality, we may assume that $\mathbb{e}=\mathsf{X}(\cC,\cH,\nu)=F_{(\cT,\cP)}$. Observe that by Remark \ref{rem:decomposition}, the tree $\cC$ satisfies the same recursive construction as $\tilde{\cC}$. 
As a consequence, setting $\bar{\ell}(\rho)=0$ (which we recall to be the ``image'' in $\mathcal{C}$ of the root $\varnothing$ of $\cT$), the  trees $\cC^0 := \bigcup_{b \in \cB(\cC) \cup \{ \rho \}} \llbracket b, \bar{\ell}(b) \rrbracket$ and $\tilde{\cC}^0 := \bigcup_{\tilde{b} \in \cB(\tilde{\cC}) \cup \{ \tilde{\rho} \}} \llbracket \tilde{b}, \tilde{\ell}(\tilde{b}) \rrbracket$ are isometric, and so are their completions, which implies that  ${\cC}=\tilde{\cC}$.

Now let us explain how to endow $\tilde{\cC}$ with a mass measure. Roughly speaking, given $h \in [0,1]$, we explain how to define the final target point of the Pacman algorithm just from $\mathsf{X}(\cC,\cH,\nu)$. We construct a sequence of branchpoints associated with $h$ as follows. If there exists $t$ such that $P_{t} \neq P_{t-}$ and $h=P_{t}$ or $h=P_{t-}$, let $t_1(h)=\dagger$. Otherwise, let $t_1(h) := \inf \{t \in [0,1], P_{t} \neq P_{t-}, P_{t} < h \}$. From Step 2 above, with $t_1(h)$ is associated a branchpoint in $\tilde{\cC}$ denoted by $b_1(h)$.
Then, as for the stick breaking construction of $\cC$, we iterate this in the excursion $\be^{(t_1(h))}$. In the end, we obtain  an increasing sequence (for the genealogical order) of branchpoints associated with $h$. Also observe that this sequence stops at $\dagger$ only for countably many values of $h \in [0,1]$.
For every $h \in [0,1]$ for which this does not happen, we define $\ell(h)$ as the limit of the sequence $(b_i(h))_{i \geq 1}$.
Then given the description of the Pacman algorithm in the beginning of Sec.\ref{ssec:proof} we have $\ell(h)=\pi_{h}$, in the sense that $\ell(h)$ is the final target point of $h$ in $\cC$ by the Pacman algorithm.
We then define the mass measure $\tilde{\nu}$ on $\tilde{\cC}$ as the pushforward of the Lebesgue measure on $[0,1]$ by $\ell$, which by Lemma \ref{lem:measure} coincides with $\nu$.

We finally prove (ii). By step (i), we have $\mathsf{X}\left(\Xi(\be)\right)\overset{(d)}{=}\be$, so it suffices to check that they coincide a.s. on a dense subset of $[0,1]$. Now, for any $t \geq 0$ such that $P_{t-} \neq P_{t}$, by the discussion in the beginning of Sec.~\ref{ssec:proof} we have $\mathsf{X}\left(\Xi(\be)\right)(P_{t}) = \be_{P_{t}}$ and $\mathsf{X}\left(\Xi(\be)\right)(P_{t-}) = \be_{P_{t-}}$. Iterating this with the functions  $\be^{(t)}$, we obtain that a.s. $\mathsf{X}\left(\Xi(\be)\right)$ and $\be$ coincide on a dense set, and this completes the proof.
\end{proof}

\begin{rk}
\label{rk:rem2}
By Remark \ref{rk:rem1} the previous proof also shows that  if $ (\mathcal{C},\cH,\nu)=\Xi(\be)$ with $\cH= (\bar{\ell}(b), b \in  \cB(\cutT)  )$, then conditionally given $(\mathcal{C},\nu,\N)$,  $\bar{\ell}(b)$ has law $\nu_{\cC_{b}^{\bar{\ell}(b)}}$  for $b \in\cB(\cC)$, and these random variables are independent.
\end{rk}

\subsection{From the semi-enriched cut-tree to the initial tree}
\label{ssec:B2}

To establish Theorem \ref{thm:couplage} (ii) we apply the map $\Xi$ introduced in Proposition \ref{prop:excequalcuttree} to a Brownian excursion, which allows to reconstruct a cut-tree. We then want to apply  Theorem \ref{thm:ADG} to  reconstruct a Brownian CRT with its Poissonian rain, and to this end we need  a set of routing random variables. Additional randomness is required because $\Xi$ gives only ``half'' of the rootings.

\begin{proof}[Proof of Theorem \ref{thm:couplage} (ii)]
Consider a Brownian excursion $\mathbbm{e}$ and an independent sequence of i.i.d.~uniform random variables on $[0,1]$. Recall from  Proposition \ref{prop:excequalcuttree} the map $\Xi$. Set $ (\mathcal{C},\cH,\nu)=\Xi(\be)$ with $\cH= (\bar{\ell}(b), b \in  \cB(\cutT) )$. Recall also from the proof of Proposition \ref{prop:excequalcuttree} the map $\pi : [0,1] \rightarrow  \cC$.

For convenience, we split the i.i.d.~uniform random variables on $[0,1]$ into two independent collections of i.i.d~uniform random variables on $[0,1]$: the first one $(V_{i})_{i \geq 1}$ indexed by $\N$ and the second one $(W_b)_{b \in \cB(\cC)}$ indexed by the branchpoints of $\cC$ (this can be done in a deterministic measurable way).

We set $i=\pi(V_{i})\in \cC$ for every $i \geq 1$, so that $\N \cup \{\rho\}$ are i.i.d.~with law $\nu$, where $\rho$ is the root of $\mathcal{C}$.

We shall now define  routing variables  $Z= (Z_{\mathcal{A}}, \mathcal{A} \in \mathcal{S})$ such that for every $\mathcal{A} \in \mathcal{S}$ the random variable $Z_{\mathcal{A}}$ has law $\nu_{\mathcal{A}}$, and these random variables are conditionally independent given $(\cC, \nu)$, where we recall  that $ \mathcal{S}$ denotes the set of all subtrees of the form $ \cC_{b}^{\bar{\ell}(b)}$ or $\cC_b \backslash \cC_b^{\bar{\ell}(b)} \cup \{b\} $ for $b \in   \mathcal{B}(\cC)$. 

First, for every $b \in  \cB(\cutT) $ we set $Z_{\cC_{b}^{\bar{\ell}(b)}}=\bar{\ell}(b)$. Then,  by Remark \ref{rk:rem2} the random variables $(Z_{\cC_{b}^{\bar{\ell}(b)}}: b \in  \cB(\cutT) \cup \{\rho\} )$ have respective laws $\nu_{\cC_{b}^{\bar{\ell}(b)}}$, and these random variables are conditionally independent given $(\cC, \N)$.

Second take $b \in \cB(\cutT)$.  To define $Z_{\cC_b \backslash \cC_b^{\ell(b)} \cup \{b\} }$ we proceed as follows.  Assume for convenience that $b \in \llbracket \rho, 0 \rrbracket$. Keeping the notation introduced in the proof of Proposition \ref{prop:excequalcuttree}, with $b$ is associated a time $t \geq 0$ such that $P_{t} \neq P_{t-}$. Then set $Z_{\cC_b \backslash \cC_b^{\ell(b)} \cup \{b\}}=\pi_{ W_{b} P_{t}}$. Define it in the same way for every $b \in \cB(\cutT)$, in the spirit of the previous stick-breaking construction.

Finally define $Z= (Z_{\mathcal{A}}, \mathcal{A} \in \mathcal{S})$. By construction, for every $\mathcal{A} \in \mathcal{S}$ the random variable $Z_{\mathcal{A}}$ has law $\nu_{\mathcal{A}}$, and these random variables are conditionally independent given $(\cC, \N,\nu)$.

Applying to $(\cutT, \N, Z)$ the map $\Phi$ of \cite[Corollary $17$]{ADG19} provides a triple $\Phi(\cutT, \N, Z)$ having the law of $(\cT, U, \cP)$, such that almost surely, by Proposition \ref{prop:excequalcuttree}, $F_{\Phi(\cutT, \N, Z)} = \be$. This completes the proof.
\end{proof}

\bibliographystyle{abbrv}
\bibliography{BibliCutPrim}
\end{document}